\documentclass[reqno,a4paper,11pt]{amsart}
\parindent=15pt
\parskip=3pt
\setlength{\textwidth}{7in}
\setlength{\oddsidemargin}{-24pt}
\setlength{\evensidemargin}{-24pt}
\setlength{\textheight}{9.2in}
\setlength{\topmargin}{-5pt}

\usepackage[ngerman, english]{babel}
\usepackage{graphicx}
\usepackage{amsmath}
\usepackage{amssymb}
\usepackage{amsfonts}
\usepackage{enumerate}
\usepackage[arrow, matrix, curve]{xy}
\usepackage{url}
\usepackage[pagebackref,colorlinks]{hyperref}
\usepackage{comment}
\usepackage{multirow}

\let\oldtocsection=\tocsection
\let\oldtocsubsection=\tocsubsection
\renewcommand{\tocsection}[2]{\hspace{0em}\oldtocsection{#1}{#2}}
\renewcommand{\tocsubsection}[2]{\hspace{1em}\oldtocsubsection{#1}{#2}}

\theoremstyle{plain}
\newtheorem {Lemma}{Lemma}
\newtheorem {Proposition}[Lemma]{Proposition}
\newtheorem {Theorem}[Lemma]{Theorem}
\newtheorem {Corollary}[Lemma]{Corollary}

\theoremstyle{definition}
\newtheorem{Definition}[Lemma]{Definition}

\newtheorem{Remark}[Lemma]{Remark}
\newtheorem {Example}[Lemma]{Example}

\numberwithin{equation}{section}
\title{The E-Normal Structure of Odd Dimensional Unitary Groups}
\author{Anthony Bak}
\address{Department of Mathematics,
University of Bielefeld, Germany}
\email{bak.biel@gmail.com, bak@math.uni-bielefeld.de}
\author{Raimund Preusser}
\address{Department of Mathematics,
University of Brasilia, Brazil}
\email{raimund.preusser@gmx.de}
\subjclass[2010]{20G35, 20H25} 
\keywords{unitary groups, sandwich classification}
\date{}
\newcommand{\m}{\mathfrak{m}}
\newcommand{\+}{\overset{.}{+}}
\newcommand{\minus}{\overset{.}{-}}
\newcommand{\h}{\mathfrak{H}}
\newcommand{\GL}{\operatorname{GL}}
\newcommand{\E}{\operatorname{E}}
\newcommand{\C}{\operatorname{C}}
\newcommand{\U}{\operatorname{U}}
\newcommand{\EU}{\operatorname{EU}}
\newcommand{\UEU}{\operatorname{UEU}}
\newcommand{\TEU}{\operatorname{TEU}}
\newcommand{\NU}{\operatorname{NU}}
\newcommand{\CU}{\operatorname{CU}}
\newcommand{\Ort}{\operatorname{O}}
\newcommand{\Sp}{\operatorname{Sp}}
\newcommand{\tr}{\operatorname{tr}}
\newcommand{\Center}{\operatorname{Center}}

\newcommand{\Normalizer}{\operatorname{Normalizer}}
\newcommand{\M}{\operatorname{M}}
\newcommand{\FP}{\operatorname{FP}}
\makeatletter

\begin{document}
\maketitle
\begin{abstract}In this paper we define odd dimensional unitary groups $\U_{2n+1}(R,\Delta)$. These groups contain as special cases the odd dimensional general linear groups $\GL_{2n+1}(R)$ where $R$ is any ring, the odd dimensional orthogonal and symplectic groups $\Ort_{2n+1}(R)$ and $\Sp_{2n+1}(R)$ where $R$ is any commutative ring and further the first author's even dimensional unitary groups $\U_{2n}(R,\Lambda)$ where $(R,\Lambda)$ is any form ring. We classify the E-normal subgroups of the groups $\U_{2n+1}(R,\Delta)$ (i.e. the subgroups which are normalized by the elementary subgroup $\EU_{2n+1}(R,\Delta)$), under the condition that $R$ is either a semilocal or quasifinite ring with involution and $n\geq 3$. Further we investigate the action of $\U_{2n+1}(R,\Delta)$ by conjugation on the set of all E-normal subgroups.
\end{abstract}

\tableofcontents
\section{Introduction}
Towards the end of the 19th century mathematicians in particular C. Jordan \cite{jordan} and L. E.
Dickson \cite{dickson, dickson_book} became interested in the normal subgroups of classical groups, at first over finite fields and then over infinite fields. The case of finite fields was of special interest because normal subgroups here yielded concrete examples of finite nonabelian simple groups which were important for Galois theory. In the 1940's, 50's and 60's, J. Dieudonne \cite{dieudonne_book_1, dieudonne_1, dieudonne_2,  dieudonne_book_2} and others extended the work above to include all classical groups over fields and generalized it to include skew fields. In the early 60's, W. Klingenberg \cite{klingenberg_1,klingenberg_2,klingenberg_4,klingenberg_3} generalized the results to classical groups over commutative local rings and in the case of the general linear group $\GL_n$ to noncommutative local rings. In 1964, H. Bass \cite{bass} described the E-normal subgroups (i.e. the subgroups normalized by the elementary subgroup) of general linear groups over rings satisfying a stable range condition. Namely he proved that if $H$ is a subgroup of the general linear group $\GL_n(R)$, then
\begin{align*}
H \text{ is E-normal }\Leftrightarrow \exists!\text{ ideal } I:\E_n(R,I) \subseteq H\subseteq \C_n(R,I)\tag{1.1}
\end{align*}
for all rings $R$ of finite stable range $r$ and $n\geq max(r + 1, 3)$. In (1.1), $\E_n(R,I)$ denotes the relative elementary subgroup of level $I$ and $\C_n(R,I)$ denotes the full
congruence subgroup of level $I$. In 1972, J. S. Wilson \cite{wilson} proved that (1.1) holds if $R$ is commutative and $n\geq 4$, without a stable range restriction. One year later, I. Z. Golubchik \cite{golubchik} added the case that $R$ is commutative and $n=3$. In 1981, L. N. Vaserstein \cite{vaserstein} proved that (1.1) holds if $R$ is almost commutative (i.e. finitely generated as module over its center) and $n\geq 3$.

A natural question is if similar results hold for other classical groups. In 1969, the first author \cite{bak_2} defined (even dimensional) hyperbolic unitary groups $\U_{2n}(R,\Lambda)$ which include as special cases the groups $\GL_{2n}(R)$ where $R$ is any ring and the classical groups $\Ort_{2n}(R)$ and $\Sp_{2n}(R)$ where $R$ is any commutative ring. He proved that if $H$ is a subgroup of $\U_{2n}(R,\Lambda)$, then
\begin{align*}
H \text{ is E-normal }\Leftrightarrow \exists!\text{ form ideal } (I,\Gamma):\EU_{2n}((R,\Lambda),(I,\Gamma)) \subseteq H\subseteq \CU_{2n}((R,\Lambda),(I,\Gamma))\tag{1.2}
\end{align*}
for all form rings $(R,\Lambda)$ such that $R$ is semilocal or has finite Bass-Serre dimension $d$ and $n \geq max(d + 2,3)$. In (1.2), $\EU_{2n}((R,\Lambda),(I,\Gamma))$ denotes the relative elementary subgroup of level $(I,\Gamma)$ and $\CU_{2n}((R,\Lambda),(I,\Gamma))$ denotes the full congruence subgroup of level $(I,\Gamma)$. In 1989, E. Abe \cite{abe} proved that (1.2) holds for the classical groups $\Sp_{2n}(R)$  and $\Ort_{2n}(R)$, namely those of Chevalley types $C$ and $D$,  for all $n\geq 3$ and all commutative rings $R$. In 1995, L. N. Vaserstein and H. You \cite{vaserstein-you} gave an incorrect proof of (1.2) for the case that $R$ is almost commutative and $n\geq 3$. The proof can be repaired when $2$ is invertible in $R$. In 2012, H. You \cite{you} proved that (1.2) holds if $R$ is commutative and $n\geq 4$ and one year later H. You and X. Zhou \cite{you-zhou} added the case that $R$ is commutative and $n=3$. Recently the second author \cite{preusser} proved that (1.2) holds if $R$ is semilocal or almost commmutative and $n\geq 3$. 

In this paper we define odd dimensional unitary groups $\U_{2n+1}(R,\Delta)$. These groups are isomorphic to Petrov's odd hyperbolic unitary groups such that $V_0=R$, see \cite{petrov}. They contain as special cases the groups $\GL_{2n+1}(R)$ where $R$ is any ring, the classical groups $\Ort_{2n+1}(R)$ and $\Sp_{2n+1}(R)$ where $R$ is any commutative ring and further all even dimensional unitary groups $\U_{2n}(R,\Lambda)$ (cf. example \ref{18}) and all of the first author's odd dimensional unitary groups $\U_{2n+1}(R,\Lambda)$ when the Hermitian form associated to $\U_{2n+1}(R,\Delta)$ is even. In fact, the groups $\U_{2n+1}(R,\Delta)$ are introduced precisely to eliminate the necessity of assuming that the Hermitian form associated to $\U_{2n+1}$ is even. However, we define the groups $\U_{2n+1}(R,\Delta)$ differently than Petrov does, namely as automorphism groups of doubly parametrized forms. These forms were introduced already in a paper \cite[Section 4]{bak-morimoto} of M. Morimoto and the first author, in the context of surgery theory in order to extend a quadratic form defined with respect to a classical form parameter on an even submodule of a noneven Hermitian module, to a quadratic form defined across the entire Hermitian module. This extension requires introducing a second form parameter. In the current paper we start with a (not necessarily even) Hermitian form on a $2n+1$ dimensional free module $R\oplus R^{2n}$ such that on $R^{2n}$, the Hermitian form is free hyperbolic (and therefore even) and is equipped with the standard classical quadratic form for this situation, taking values in $R/\Lambda$ where $\Lambda$ is a classical form parameter. In order to extend the standard quadratic form across the whole Hermitian module, namely across $R\oplus R^{2n}$, we take as a second form parameter an odd form parameter $\Delta$ of V. Petrov \cite{petrov}. $\Delta$ is a normal subgroup of the Heisenberg group $\h$ and the extended quadratic form takes values in $\h/\Delta$. The choice of $\Delta$ is not unique, but there is always a canonical injective map of $R/\Lambda$ into $\h/\Delta$, which allows us to extend the classical quadratic form on $R^{2n}$ to one on $R\oplus R^{2n}$. The pair $(R,\Delta)$ is called a Hermitian form ring. The automorphism group of the Hermitian module and extended quadratic form above is isomorphic to the Petrov hyperbolic unitary group such that his $V_0 = R$, see Remark \ref{17}(c). The main difference between a classical quadratic form and an extended quadratic form is that the classical trace condition for a classical quadratic form is replaced by a generalized trace condition. The deviation of a quadratic form from being additive is measured the same way in both the classical and extended situations, namely by the Hermitian form. Taking this point of view, the arguments of the current paper for $\U_{2n+1}(R,\Delta)$ run parallel to and are an extension of those in \cite{preusser_thesis} or \cite{preusser} for the case of the even dimensional unitary group $\U_{2n}(R,\Lambda)$.  

The main result of the current paper is the following: If $H$ is a subgroup of an odd dimensional unitary group $\U_{2n+1}(R,\Delta)$, then
\begin{align*}
H \text{ is E-normal }\Leftrightarrow \exists!\text{ odd form ideal } (I,\Omega):\EU_{2n+1}((R,\Delta),(I,\Omega)) \subseteq H\subseteq \CU_{2n+1}((R,\Delta),(I,\Omega))\tag{1.3}
\end{align*}
provided $R$ is semilocal or quasifinite (i.e. a direct limit of almost commutative rings) and $n \geq 3$. In (1.3), $\EU_{2n+1}((R,\Delta),(I,\Omega))$ denotes the relative elementary subgroup of level $(I,\Omega)$ and $\CU_{2n+1}((R,\Delta),(I,\Omega))$ denotes the full congruence subgroup of level $(I,\Omega)$. Further we investigate the action of $\U_{2n+1}(R,\Delta)$ on the set of all E-normal subgroups by conjugation.

The main result above ((1.3) for semilocal or quasifinite rings $R$ and $n \geq 3$) yields a description of the E-normal structure of $\GL_m(R)$ but only for $m\geq 6$, while Vaserstein in \cite{vaserstein} considered the more general case $m\geq 3$. For results on the case $m=2$, see \cite{costa-keller}. The main result above also yields a description of the E-normal structure of the classical groups $\Ort_{2n+1}(R)$, namely those of Chevalley type $B$, of rank $n\geq 3$ over arbitrary commutative rings, due to Abe \cite{abe}. The Hermitian form associated to these groups is even symmetric. There are also results on the E-normal structure of $\Sp_{4}(R)$ (cf. \cite{costa-keller_2}) which are not covered by the main result of our paper.

The authors expect that the results of the current paper can be generalized to include Petrov's odd hyperbolic unitary groups and beyond, but details will probably be even more complicated than in this paper. See comments following Definition \ref{8}.

The rest of the paper is organized as follows. In Section \ref{sec2} we recall some standard notation
which will be used throughout the paper. In Section \ref{sec3} we define odd dimensional unitary groups and some important subgroups. In Section \ref{sec4} we investigate the action of conjugation on important subgroups. The main result of this section is Theorem \ref{47}. At the end of the section, we specialize to the cases $R$ is semilocal or quasifinite and determine the effect of conjugation on the level of an E-normal subgroup. In Section \ref{sec6}, we prove the very important Extraction Theorem. In the last section we prove the main result (1.3), first for semilocal rings, then for certain Noetherian rings and then for quasifinite rings. 
\section{Notation}\label{sec2}
Let $G$ be a group and $H,K$ be subsets of $G$. The subgroup of $G$ generated by $H$ is denoted by $\langle H\rangle$. If $g,h\in G$, let $^hg:=hgh^{-1}$, $g^h:=h^{-1}gh$ and $[g,h]:=ghg^{-1}h^{-1}$. Set $^KH:=\langle \{^kh\mid h\in H, k\in K\}\rangle$ and $H^K:=\langle \{h^k\mid h\in H, k\in K\}\rangle$. Analogously define $[H,K]$ and $HK$. Instead of $^K\{g\}$ we write $^Kg$ (analogously we write $g^K$ instead of $\{g\}^K$, $^gH$ instead of $^{\{g\}}H $, $[g,K]$ instead of $[\{g\},K]$ and so forth).\\

In this paper, $\mathbb{N}=\{1,2,\dots\}$ denotes the set of all natural numbers. The word "ring" will always mean associative ring with $1\neq 0$, "ideal" will mean two-sided ideal. If $R$ is a ring and $m,n\in \mathbb{N}$, then the set of all invertible elements in $R$ is denoted by $R^*$ and the set of all $m\times n$ matrices with entries in $R$ is denoted by $\M_{m\times n}(R)$. If $a\in \M_{m\times n}(R)$, let $a_{ij}\in R$ denote the element in the $(i,j)$-th position. Let $a^t$ denote the transpose of $a$, thus $(a^t)_{ij}=a_{ji}$. In particular if $a$ denotes a column, resp. row, vector with coefficients in $R$, then $a^t$ is a row, resp. column, vector with coefficients in $R$. Denote the $i$-th row of $a$ by $a_{i*}$ and the $j$-th column of $a$ by $a_{*j}$. We set $\M_n(R):=\M_{n\times n}(R)$. The identity matrix in $\M_n(R)$ is denoted by $e$ or $e^{n\times n}$ and the matrix with a $1$ at position $(i,j)$ and zeros elsewhere is denoted by $e^{ij}$. If $a\in \M_n(R)$ is invertible, the entry 
of $a^{-1}$ at position $(i,j)$ is denoted by $a'_{ij}$, the $i$-th row of $a^{-1}$ by $a'_{i*}$ and the $j$-th column of $a^{-1}$ by $a'_{*j}$. Further we denote by $^n\!R$ the set of all rows $v=(v_1,\dots,v_n)$ with entries in $R$ and by $R^n$ the set of all columns $u=(u_1,\dots,u_n)^t$ with entries in $R$.
\section{Odd dimensional unitary groups}\label{sec3}
\subsection{Hermitian form rings and odd form ideals}\label{sec 3.1}
First we recall the definition of a ring with involution with symmetry and introduce the notion of a Hermitian ring.
\begin{Definition}\label{1}
Let $R$ be a ring and 
\begin{align*}
\bar{}:R&\rightarrow R\\
x&\mapsto \bar{x}
\end{align*}
an anti-isomorphism of $R$ (i.e. $\bar{}~$ is bijective, $\overline{x+y}=\bar x+\bar y$, $\overline{xy}=\bar y\bar x$ for any $x,y\in R$ and $\bar 1=1$). Further let $\lambda\in R$ such that $\bar{\bar x}=\lambda x\bar\lambda$ for any $x\in R$. Then $\lambda$ is called a {\it symmetry} for $~\bar{}~$, the pair $(~\bar{}~,\lambda)$ an {\it involution with symmetry} and the triple $(R,~\bar{}~,\lambda)$ a {\it ring with involution with symmetry} or a {\it ring with involution and symmetry}. A subset $A\subseteq R$ is called {\it involution invariant} iff $\bar x\in A$ for any $x\in A$. We call a quadruple $(R,~\bar{}~,\lambda,\mu )$ where $(R,~\bar{}~,\lambda)$ is a ring with involution with symmetry and $\mu \in R$ such that $\mu =\bar\mu \lambda$ a {\it Hermitian ring}.
\end{Definition}
\begin{Remark}\label{2}
Let $(R,~\bar{}~,\lambda,\mu )$ be a Hermitian ring.
\begin{enumerate}[(a)]
\item It is easy to show that $\bar\lambda=\lambda^{-1}$.
\item If $\lambda\in \Center(R)$ then one gets the usual concept of an involution which is for instance used in \cite{preusser}.
\item The map
\begin{align*}
\b{}:R&\rightarrow R\\
x&\mapsto \b{x}:=\bar\lambda \bar x\lambda
\end{align*}
is the inverse map of $~\bar{}~$. One checks easily that $(R,~\b{}~,\b{$\lambda$},\b{$\mu $})$ is a Hermitian ring.
\item For any $n\in\mathbb{N}$, $(\M_n(R),{}^*,\lambda e,\mu  e)$ is a Hermitian ring where $\sigma^*:=\bar \sigma^t=((\bar \sigma_{ij})_{ij})^t=(\bar\sigma_{ji})_{ij}$ for any $\sigma\in \M_n(R)$. The inverse map of ${}^*$ is the map ${}_*$ which associates to each $\sigma\in \M_n(R)$ the matrix $\sigma_*:=\underline{\sigma}^t$. It follows from (c) that $(\M_n(R),{}_*,\b{$\lambda$} e,\b{$\mu $} e)$ is also a Hermitian ring.  
\end{enumerate}
\end{Remark}

Below we introduce the notion of an $R^{\bullet}$-module. It will play an important role in the rest of the paper.
\begin{Definition}\label{3}
If $R$ is a ring, let $R^\bullet$ denote the underlying set of the ring equipped with the  
multiplication of the ring, but not the addition of the ring. A {\it (right) $R^{\bullet}$-module} is a not 
necessarily abelian group $(G,\+)$ equipped with a map 
\begin{align*}
\circ: G\times R^{\bullet}&\rightarrow G\\
(a,x) &\mapsto a\circ x
\end{align*}
such that the following holds:
\begin{enumerate}[(1)]
\item $a\circ 0=0$ for any $a\in G$,
\item $a\circ 1=a$ for any $a\in G$,
\item $(a\circ x)\circ y=a\circ (xy)$ for any $a\in G$ and $x,y\in R$ and
\item $(a\+ b)\circ x=(a\circ x)\+(b\circ x)$ for any $a,b\in G$ and $x\in R$.
\end{enumerate}
A left $R^{\bullet}$-module is defined analogously. An $R$-module is canonically an $R^{\bullet}$-module, but not conversely. Let $G$ and $G'$ be $R^{\bullet}$-modules. A group homomorphism $f:G\rightarrow G'$ satisfying $f(a\circ x)=f(a)\circ x$ for any $a\in G$ and $x\in R$ is called a {\it  homomorphism of $R^{\bullet}$-modules}. A subgroup $H$ of $G$ which is $\circ$-stable (i.e. $a\circ x\in H$ for any $a\in H$ and $x\in R$) is called an {\it $R^{\bullet}$-submodule}. Further, if $A\subseteq G$ and $B\subseteq R$, we denote by $A\circ B$ the subgroup of $G$ generated by $\{a\circ b\mid a\in A,b\in B\}$.
\end{Definition}
\begin{Remark}\label{4}
We treat $\circ$ as an operator with higher priority than $\+$.
\end{Remark}

Next we define the Heisenberg group. It is an $R^{\bullet}$-module and will play an important role in the rest of the paper. Certain $R^{\bullet}$-submodules called odd form parameters can be embedded
according to Lemma \ref{23} (E1) as subgroups of odd dimensional unitary groups. The role of these subgroups is analogous to the role of subgroups of even dimensional unitary groups corresponding to embeddings of classical form parameters.
\begin{Definition}\label{5}
Let $(R,~\bar{}~,\lambda,\mu )$ be a Hermitian ring. Define the map.
\begin{align*}
\+: (R\times R)\times (R\times R) &\rightarrow R\times R\\
((x_1,y_1),(x_2,y_2))&\mapsto (x_1,y_1)\+ (x_2,y_2):=(x_1+x_2,y_1+y_2-\bar x_1\mu  x_2).
\end{align*}
Then $(R\times R,\+)$ is a group, which we call {\it the Heisenberg group} and denote by $\h$. Equipped with the map
\begin{align*}
\circ:(R\times R)\times R^{\bullet}&\rightarrow R\times R\\
((x,y),a)&\mapsto (x,y)\circ a:=(xa,\bar aya)
\end{align*}
$\h$ becomes an $R^{\bullet}$-module.
\end{Definition}

The following lemma is straightforward to check.
\begin{Lemma}\label{7}
Let $(R,~\bar{}~,\lambda,\mu )$ be a Hermitian ring. Denote the inverse of an element $(x,y)\in \h$ by $\minus(x,y)$. Then the following identities hold in $\h$:
\begin{enumerate}[(1)]
\item[\textnormal{(1)}] $\minus(x,y)=(-x,-y-\bar x\mu  x)$ for any $(x,y)\in \h$,
\item[\textnormal{(2)}] $(x_1,y_1)\minus(x_2,y_2)=(x_1-x_2,y_1-y_2+\overline{(x_1-x_2)}\mu  x_2)$ for any $(x_1,y_1),(x_2,y_2)\in \h$,
\item[\textnormal{(3)}] $[(x_1,y_1),(x_2,y_2)]=(0,\bar x_2\mu x_1-\bar x_1\mu x_2)$ for any $(x_1,y_1),(x_2,y_2)\in \h$ and
\item[\textnormal{(4)}] $\overset{.}{\sum\limits_{1\leq i \leq n}}(x_i,y_i)=(\sum\limits_{i=1}^n x_i,\sum\limits_{i=1}^n y_i-\sum\limits_{\substack{i,j=1,\\i<j}}^n \bar x_i\mu  x_j)$ for any $n\in\mathbb{N}$ and $(x_1,y_1),\hdots,(x_n,$ $y_n)\in \h$.
\end{enumerate}
\end{Lemma}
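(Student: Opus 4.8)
\emph{Proof proposal.} The plan is to prove all four parts by direct computation from the definitions of $\+$ and $\bullet$ in Definition \ref{5}; apart from the additivity of the anti-isomorphism $\bar{}~$ none of the odd quadruple axioms (the symmetry $\lambda$, the relation $\mu =\bar\mu \lambda$) will be needed. For (1), since $(\h,\+)$ is a group by Definition \ref{5}, it suffices to check that $(-x,-y-\bar x\mu  x)$ is a right $\+$-inverse of $(x,y)$: substituting into the definition of $\+$ gives first coordinate $x+(-x)=0$ and second coordinate $y+(-y-\bar x\mu  x)-\bar x\mu (-x)=0$. Part (2) then follows by combining (1) with the definition of $\+$: one expands $(x_1,y_1)\+(-x_2,-y_2-\bar x_2\mu  x_2)$, obtaining second coordinate $y_1-y_2-\bar x_2\mu  x_2+\bar x_1\mu  x_2$, and rewrites this as $y_1-y_2+\overline{x_1-x_2}\mu  x_2$ using $\overline{x_1-x_2}=\bar x_1-\bar x_2$.

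For (3), I would expand $[(x_1,y_1),(x_2,y_2)]=(x_1,y_1)\+(x_2,y_2)\+\minus(x_1,y_1)\+\minus(x_2,y_2)$ (commutator convention of Section \ref{sec2}) and evaluate the three $\+$-operations from left to right, using (1) for the inverses; the first coordinates cancel to $0$, and of the twist terms $-\overline{(\cdot)}\mu (\cdot)$ produced by $\+$ all but $\bar x_2\mu  x_1-\bar x_1\mu  x_2$ cancel, giving the stated value in the second coordinate. For (4), induct on $n$: the case $n=1$ is trivial (and $n=2$ is exactly the definition of $\+$), and for the inductive step one applies the definition of $\+$ to $\big(\overset{.}{\sum\limits_{1\leq i\leq n}}(x_i,y_i)\big)\+(x_{n+1},y_{n+1})$; the new first coordinate is $\sum_{i=1}^{n+1}x_i$, and by the inductive hypothesis together with $\overline{\sum_{i=1}^n x_i}=\sum_{i=1}^n\bar x_i$ the new twist term $-\overline{\sum_{i=1}^n x_i}\mu  x_{n+1}$ contributes precisely the missing cross terms $-\sum_{i=1}^n\bar x_i\mu  x_{n+1}$, so the second coordinate becomes $\sum_{i=1}^{n+1}y_i-\sum_{1\leq i<j\leq n+1}\bar x_i\mu  x_j$.

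There is no genuine obstacle here; the only points requiring care are that $\+$ is not commutative — it carries a Heisenberg-type $2$-cocycle — so in (3) and (4) the order of the $\+$-operations matters and the iterated sum must be read as the left-nested expression $(\cdots((x_1,y_1)\+(x_2,y_2))\+\cdots)\+(x_n,y_n)$, and that one must use throughout that $\bar{}~$ reverses products but is additive.
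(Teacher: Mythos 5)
Your proof is correct and is exactly the direct computation the paper has in mind — the paper offers no argument beyond declaring the lemma "straightforward to check," and your verification of (1)–(4) from the definition of $\+$ (including the left-nested reading of the commutator and the iterated sum, and the additivity of $\bar{}~$) fills that in accurately.
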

\begin{Definition}\label{8}
Let $(R,~\bar{}~,\lambda,\mu )$ be a Hermitian ring. Let $(R,+)$ have the $R^{\bullet}$-module structure defined by $x\circ a = \bar{a}xa$. Define the {\it trace map}
\begin{align*}
\tr:\h&\rightarrow R\\
(x,y)&\mapsto \bar x\mu  x+y+\bar y\lambda.
\end{align*}
One checks easily that $\tr$ is a homomorphism of $R^{\bullet}$-modules. Set \[\Delta_{min}:=\{(0,x-\overline{x}\lambda)\mid x\in R\}\] and \[\Delta_{max}:=\ker(\tr).\] An $R^{\bullet}$-submodule $\Delta$ of $\h$ lying between $\Delta_{min}$ and $\Delta_{max}$ is called an {\it odd form parameter of $(R,~\bar{}~,\lambda,\mu )$}. A pair $((R,~\bar{}~,\lambda,\mu ),\Delta)$ is called a {\it Hermitian form ring}. We shall usually abbreviate it by $(R,\Delta)$. Since $\Delta_{min}$ and $\Delta_{max}$ are $R^{\bullet}$-submodules of $\h$, they are respectively the smallest and largest odd form parameters. Since $\Delta_{min}$ contains the commutator subgroup of $\h$, the quotient $\h/\Delta$ is an $R^{\bullet}$-module whose underlying group is abelian. We set $\Lambda(\Delta) :=\{ x\in R\mid (0,x)\in \Delta\}$. It is clearly a classical form parameter of $R$ in the sense of \cite[Section 1(B) and Section 13]{bak_book} or \cite{bak_2} and the canonical map 
\begin{align*}
R &\rightarrow \h,\\
x&\mapsto(0,x)
\end{align*}
induces an injection $R/\Lambda(\Delta)\rightarrow \h/\Delta$ of $R^\bullet$-modules. The pair $(R,\Lambda(\Delta))$ is called a {\it form ring}.
\end{Definition}

{\it Morphisms} of the rings above are defined in the ususal way. A morphism $(R,~\bar{}~,\lambda)
\rightarrow (R',~\bar{}~',\lambda')$ of rings with involution with symmetry is a ring homomorphism
$R\rightarrow R'$ which preserves involution and symmetry. A morphism $(R,~\bar{}~,\lambda, \mu)
\rightarrow (R',~\bar{}~',\lambda',\mu')$ of Hermitian rings is a morphism $(R,~\bar{}~,\lambda)\rightarrow (R',~\bar{}~',\lambda')$ of  rings with involution and symmetry which takes the 
Hermitian element $\mu$ to the Hermitian element $\mu'$. Any morphism  $(R,~\bar{}~,\lambda, \mu)
\rightarrow (R',~\bar{}~',\lambda',\mu')$ of Hermitian rings induces a morphism $\h\rightarrow
\h'$ of the corresponding Heisenberg groups.  A morphism $(R,~\bar{}~,\lambda, \mu,\Delta)\rightarrow (R',~\bar{}~',\lambda',\mu',\Delta')$ of Hermitian form rings is a morphism $(R,~\bar{}~,\lambda, \mu)
\rightarrow (R',~\bar{}~',\lambda',\mu')$ of Hermitian rings such that the induced map $\h\rightarrow\h'$ preserves the odd form parameters. Our definitions of odd dimensional unitary groups will be functorial in Hermitian form rings. 

The constructions Hermitian ring and Hermitian form ring have analogs and generalizations in surgery theory where they are called prepared algebras and parameter algebras. If one wants to extend
the results of the current paper regarding forms on $R\oplus R^{2n}$,  which restrict to the free hyperbolic form on $R^{2n}$, to forms on $R^{k}\oplus R^{2n}$ which restrict to the free hyperbolic form on $R^{2n}$, it is useful to carry over some of the constructions used in surgery theory to the setting of the current paper. Doing this, one gets for the notion of Hermitian ring, a ring with involution with symmetry which is equipped with a set of Hermitian elements, rather than just one Hermitian element. In this situation there is a suitable notion of Heisenberg group and one gets a good notion of Hermitian form ring by equipping the Hermitian ring above with an $R^{\bullet}$-submodule of the generalized Heisenberg group. These submodules include the odd form parameters of V. Petrov. 

Next we define an odd form ideal of a Hermitian form ring. They will be of central importance throughout the rest of the paper.
\begin{Definition}\label{11}
Let $(R,\Delta)$ be a Hermitian form ring and $I$ an involution invariant ideal of $R$. Set $J(\Delta):=\{y\in R\mid\exists z\in R:(y,z)\in \Delta\}$ and $\tilde I:=\{x\in R\mid\overline{J(\Delta)}\mu  x\subseteq I\}$. Further set \[\Omega^I_{min}:=\{(0,x-\bar x\lambda)\mid x\in I\}\+ \Delta\circ I\] and \[\Omega^I_{max}:=\Delta\cap (\tilde I\times  I).\]
An $R^{\bullet}$-submodule $\Omega$ of $\h$ lying between $\Omega^I_{min}$ and $\Omega^I_{max}$ is called a {\it relative odd form parameter for} $I$ or a {\it relative odd form parameter of level $I$}. Since $\Omega^I_{min}$ and $\Omega^I_{max}$ are $R^{\bullet}$-submodules of $\h$, they are respectively the smallest and the largest relative odd form parameters for $I$. If $\Omega$ is a relative odd form parameter for $I$, then $(I,\Omega)$ is called an {\it odd form ideal of $(R,\Delta)$}.
\end{Definition}
\begin{Remark}\label{12}
Let $\Omega$ be a relative odd form parameter for $I$.
\begin{enumerate}[(a)]
\item Obviously $\tilde I$ and $J(\Delta)$ are right ideals of $R$. Further $I\subseteq \tilde I$. Set $J(\Omega):=\{y\in R\mid\exists z\in R:(y,z)\in \Omega\}$. Then $J(\Omega)$ is a right ideal of $R$ and 
\[J(\Delta)I\subseteq J(\Omega)\subseteq J(\Delta)\cap \tilde I.\]
\item One checks easily that $\Omega$ is a normal subgroup of $\Delta$. Further $\Omega^I_{max}/\Omega$ is a right $R$-module (not just an $R^{\bullet}$-module) with scalar multiplication $(a\+\Omega)x=(a\circ x)\+\Omega$.
\item $\Gamma(\Omega):=\{x\in R\mid(0,x)\in\Omega\}$ is a classical relative form parameter for $I$ of the form ring $(R,\Lambda(\Delta))$ (cf. \cite{bak_2}, \cite{preusser} or \cite{shchegolev}). 
\end{enumerate}
\end{Remark}
\begin{Definition}\label{13}
Let $(R,\Delta)$ be a Hermitian form ring. Further let $Y\subseteq R$ and $Z\subseteq \Delta$ be subsets. Then we denote by $I(Y)$ the ideal of $R$ generated by $Y\cup\bar Y$. It is called the {\it involution invariant ideal defined by $Y$}. Further set $\Omega(Y):=\Omega^{I(Y)}_{min}$. It is called the {\it relative odd form parameter defined by $Y$} and $(I(Y),\Omega(Y))$ is called the {\it odd form ideal defined by $Y$}.  Set $Z_1:=\{x\in R\mid\exists y\in R: (x,y)\in Z\}$, $Z_2:=\{y\in R\mid\exists x\in R: (x,y)\in Z\}$ and $Z':=\overline{J(\Delta)}\mu  Z_1\cup Z_2$. We denote by $I(Z)$ the ideal of $R$ generated by $Z'\cup \overline{Z'}$. It is called the {\it involution invariant ideal defined by $Z$}. Further set $\Omega(Z):=\Omega^{I(Z)}_{min}\+ Z\circ R$. $\Omega(Z)$ is called the {\it relative odd form parameter defined by $Z$}. One checks easily that $(I(Z),\Omega(Z))$ is an odd form ideal of $(R,\Delta)$. It is called the {\it odd form ideal defined by $Z$}.
\end{Definition}
\begin{Remark}\label{14}
If $x\in R$ or $x\in\Delta$, we sometimes write $(I(x),\Omega(x))$ instead of $(I(\{x\}),\Omega(\{x\}))$.
\end{Remark}
\subsection{Odd dimensional unitary groups}\label{3.2}
Let $(R,\Delta)$ be a Hermitian form ring and $n\in \mathbb{N}$. Fix the ordered basis $(e_1,\dots,e_n,e_0,e_{-n},\dots,e_{-1})$ of the right $R$-module $M:=R^{2n+1}$ where $e_i$ 
is the column vector whose $i$-th entry $(1\leq i\leq -1)$ is $1$ and whose other entries are $0$. If $u\in M$, then we call $(u_1,\dots,u_n,u_{-n},\dots,u_{-1})^t\in R^{2n}$ the {\it hyperbolic part of $u$} and denote it by $u_{hb}$. Define the maps
\begin{align*}
b:M\times M&\rightarrow R\\
(u,v)&\mapsto u^*\begin{pmatrix} 0& 0 & p\\0&\mu &0\\ p\lambda &0 &0 \end{pmatrix}v=\sum\limits_{i=1}^{n}\bar u_i v_{-i}+\bar u_0\mu  v_0+\sum\limits_{i=-n}^{-1}\bar u_{i}\lambda v_{-i}
\end{align*}
and 
\begin{align*}
q:M&\rightarrow \h\\
u&\mapsto (q_1(u),q_2(u)):=(u_0,u_{hb}^*\begin{pmatrix} 0&p\\0&0 \end{pmatrix}u_{hb})=(u_0,\sum\limits_{i=1}^{n}\bar u_i u_{-i})
\end{align*}
where $u^*=\bar u^t$ and $p\in \M_n(R)$ is the matrix with ones on the second diagonal and zeros elsewhere.
\begin{Lemma}\label{15}
~\\
\vspace{-0.6cm}
\begin{enumerate}[(1)]
\item $b$ is a $\lambda$-Hermitian form, i.e. $b$ is biadditive, $b(ux,vy)=\bar x b(u,v) y~\forall u,v\in M,x,y\in R$ and $b(u,v)=\overline{b(v,u)}\lambda~\forall u,v\in M$.
\item $q(ux)=q(u)\circ x~\forall u\in M, x\in R$, $q(u+v)\equiv q(u)\+ q(v)\+(0,b(u,v))\bmod \Delta_{min}~\forall u,$ $v\in M$ and $tr(q(u))=b(u,u)~\forall u\in M$.
\end{enumerate}
\end{Lemma}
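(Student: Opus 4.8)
\emph{Proof plan.} Both parts are proved by a direct coordinatewise computation from the defining formulas for $b$ and $q$, the only tools needed being the three identities $\bar\lambda=\lambda^{-1}$, $\bar{\bar x}=\lambda x\bar\lambda$ and $\mu=\bar\mu\lambda$ (Remark \ref{2}(a), Definition \ref{1}, and the defining property of the odd quadruple), together with Lemma \ref{7}.

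For part (1), biadditivity of $b$ is immediate since every summand in the formula for $b(u,v)$ is additive separately in $u$ and in $v$. For the scaling law I would use $\overline{u_ix}=\bar x\,\bar u_i$, so that in $b(ux,vy)$ every term acquires the same left factor $\bar x$ and right factor $y$, giving $b(ux,vy)=\bar x\,b(u,v)\,y$. The one point needing care is the $\lambda$-Hermitian symmetry $b(u,v)=\overline{b(v,u)}\lambda$: I would apply the involution termwise to $b(v,u)$, use $\overline{ab}=\bar b\bar a$ and $\bar{\bar v}_i=\lambda v_i\bar\lambda$, then multiply on the right by $\lambda$ and collapse the resulting factors via $\bar\lambda\lambda=1$ and $\bar\mu\lambda=\mu$; after reindexing the two hyperbolic sums by $i\leftrightarrow -i$ one recovers exactly the formula for $b(u,v)$.

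For part (2), $q(ux)=q(u)\bullet x$ is read off directly: $q_1(ux)=u_0x$ and $q_2(ux)=\sum_{i=1}^n\bar x\,\bar u_iu_{-i}x=\bar x\,q_2(u)\,x$, which is $(q_1(u),q_2(u))\bullet x$ by the definition of $\bullet$ on $\h$. Similarly $tr(q(u))=b(u,u)$ follows by substituting $q(u)$ into $tr(x,y)=\bar x\mu x+y+\bar y\lambda$, rewriting $\overline{q_2(u)}\lambda$ with $\bar{\bar u}_i=\lambda u_i\bar\lambda$ and $\bar\lambda\lambda=1$, and reindexing. The substantive step is the congruence $q(u+v)\equiv q(u)\+q(v)\+(0,b(u,v))\pmod{\Delta_{min}}$. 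The plan is: expand $q_2(u+v)=q_2(u)+q_2(v)+\sum_{i=1}^n(\bar u_iv_{-i}+\bar v_iu_{-i})$; compute the second coordinate of $q(u)\+q(v)\+(0,b(u,v))$ from the definitions of $\+$ and of $b$, noting that the cross term $-\overline{q_1(u)}\mu q_1(v)=-\bar u_0\mu v_0$ produced by $\+$ cancels the $\bar u_0\mu v_0$ coming from $b$; observe that the two resulting elements of $R^2$ have the same first coordinate $u_0+v_0$, so by Lemma \ref{7}(2) their $\+$-difference is $(0,z)$ with $z=\sum_{i=1}^n\bar v_iu_{-i}-\sum_{i=1}^n\bar u_{-i}\lambda v_i$; and finally check, again via $\bar{\bar v}_i=\lambda v_i\bar\lambda$ and $\bar\lambda\lambda=1$, that $\sum_{i=1}^n\bar u_{-i}\lambda v_i=\overline{\left(\sum_{i=1}^n\bar v_iu_{-i}\right)}\lambda$, so that $z=x-\bar x\lambda$ with $x=\sum_{i=1}^n\bar v_iu_{-i}$ and hence $(0,z)\in\Delta_{min}$.

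I do not expect any real difficulty here; the only obstacle worth flagging is careful bookkeeping with the three symmetry identities and with the reindexing $i\leftrightarrow -i$ of the hyperbolic coordinates — this is precisely where the correction term in $\Delta_{min}$ arises in part (2) and where the factor $\lambda$ in the Hermitian symmetry arises in part (1) — which is presumably why the lemma is said to be straightforward to check.
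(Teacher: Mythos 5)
Your computation is correct and is exactly the "straightforward computation" the paper invokes: the identities $\bar{\bar x}=\lambda x\bar\lambda$, $\bar\lambda\lambda=1$, $\bar\mu\lambda=\mu$ together with the reindexing $i\leftrightarrow -i$ give the Hermitian symmetry in (1), and your bookkeeping in (2) — cancellation of the $\bar u_0\mu v_0$ cross term and identification of the residue $\sum\bar v_iu_{-i}-\overline{\bigl(\sum\bar v_iu_{-i}\bigr)}\lambda$ as an element of $\Delta_{min}$ — is precisely right.
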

\begin{proof}
{Straightforward computation.}
\end{proof}
\begin{Definition}\label{16}
The group
\[\U_{2n+1}(R,\Delta):=\{\sigma\in \GL_{2n+1}(R)\mid b(\sigma u,\sigma v)=b(u,v)~\forall u,v\in M \text{ and }q(\sigma u)\equiv q(u)\bmod \Delta~\forall u\in M\}\]
is called the {\it odd dimensional unitary group}.
\end{Definition}
\begin{Remark}\label{17}
$~$\\
\vspace{-0.6cm}
\begin{enumerate}[(a)]
\item One checks easily that $b(\sigma u,\sigma u)=b(u,u)\Leftrightarrow q(\sigma u)\equiv q(u)\bmod \Delta_{max}$ for any $\sigma\in \M_{2n+1}(R)$ and $u\in M$. Hence 
\begin{align*}\U_{2n+1}(R,\Delta_{max})
=\{\sigma\in \GL_{2n+1}(R)\mid b(\sigma u,\sigma v)=b(u,v)~\forall u,v\in M\}.
\end{align*}
\item 
Set $\Lambda:=\Lambda(\Delta)$ and let $n\in \mathbb{N}$. There is an embedding of $\U_{2n}(R,\Lambda)$ (as defined in \cite{preusser} or \cite{shchegolev}) into $\U_{2n+1}(R,\Delta)$ namely
\begin{align*}
\phi_{2n}^{2n+1}: \U_{2n}(R,\Lambda)&\rightarrow \U_{2n+1}(R,\Delta)\\
\begin{pmatrix}A&B\\C&D\end{pmatrix}&\mapsto \begin{pmatrix}A&0&B\\0&1&0\\C&0&D\end{pmatrix}
\end{align*}
where $A,B,C,D\in \M_n(R)$. There is also an embedding of $\U_{2n+1}(R,\Delta)$ into $\U_{2n+3}(R,\Delta)$ namely
\begin{align*}
\phi^{2n+3}_{2n+1}: \U_{2n+1}(R,\Delta)&\rightarrow \U_{2n+3}(R,\Delta)\\
\begin{pmatrix}A&t&B\\v&z&w\\C&u&D\end{pmatrix}&\mapsto \begin{pmatrix}A&0&t&0&B\\0&1&0&0&0\\v&0&z&0&w\\0&0&0&1&0\\C&0&u&0&D\end{pmatrix}
\end{align*}
where $A,B,C,D\in \M_{n}(R)$, $t,u\in \M_{n \times 1}(R)$, $v,w\in \M_{1 \times n}(R)$ and $z\in R$.
\item The odd dimensional unitary groups defined in this article are isomorphic to Petrov's odd hyperbolic unitary groups with $V_0=R$ (see \cite{petrov}). Namely $\U_{2n+1}(R,\Delta)$ is isomorphic to Petrov's group $\U_{2l}(R,\mathfrak{L})$ where Petrov's pseudoinvolution $~\hat{}~$ is defined by $\hat x=-\bar x\lambda$, $V_0=R$, $B_0(a,b)=\hat a\hat 1^{-1}\mu  b$, $\mathfrak{L}=\{(x,y)\in R\times R\mid(x,-y)\in\Delta\}$ and $l=n$.
\end{enumerate}
\end{Remark}
\begin{Example}\label{18}
~\\
\vspace{-0.6cm}
\begin{enumerate}[(1)]
\item Let $(R,\Lambda)$ be a classical form ring. Choose $\mu$ arbitary and set $\Delta:=\{0\}\times\Lambda$. Then $\U_{2n+1}(R,\Delta)\cong \U_{2n}(R,\Lambda)$. In particular the classical groups $\Ort_{2n}(R)$ and $\Sp_{2n}(R)$ where $R$ is a commutative ring and $\GL_{2n}(R)$ where $R$ is an arbitrary ring are examples of odd dimensional unitary groups.
\item Let $S$ be a ring and $S^{op}$ its opposite ring. If $R=S\times S^{op}$ is the Hermitian ring such that $\overline{(x,y)}=(y,x)$, $\lambda=1$, and $\mu=1$ and if $\Delta=\Delta_{max}$, then $\U_{2n+1}(R,\Delta)\cong \GL_{2n+1}(S)$.
\item If $R$ is commutative, $\bar x=x$, $\lambda=1$, $\mu=2$ and $\Delta=\{(x,-x^2)\mid x\in R\}$, then $ \U_{2n+1}(R,\Delta)=\Ort_{2n+1}(R)$.
\item If $R$ is commutative, $\bar x=x$, $\lambda=-1$, $\mu=0$ and $\Delta=R\times R$, then $\U_{2n+1}(R,\Delta)$ is the automorphism group of the bilinear form represented by
\[\begin{pmatrix}0&0&p\\0&0&0\\-p&0&0\end{pmatrix}.\]
It makes sense to denote this group by $\Sp_{2n+1}(R)$ since it is the automorphism group of a skew-symmetric bilinear form of maximal possible rank (cf. \cite{proctor}).
\end{enumerate}
\end{Example}
\begin{Definition}\label{19}
Define $\Theta_+:=\{1,\dots,n\}$, $\Theta_-:=\{-n,\dots,-1\}$, $\Theta:=\Theta_+\cup\Theta_-\cup\{0\}$, $\Theta_{hb}:=\Theta\setminus \{0\}$ and the map 
\begin{align*}\epsilon:\Theta_{hb} &\rightarrow\{\pm 1\}\\
i&\mapsto\begin{cases} 1, &\mbox{if } i\in\Theta_+, \\ 
-1, & \mbox{if } i\in\Theta_-. \end{cases}
\end{align*}
\end{Definition}
\begin{Lemma}\label{20}
Let $\sigma\in \GL_{2n+1}(R)$. Then $\sigma\in \U_{2n+1}(R,\Delta)$ if and only if the conditions \textnormal{(1)} and \textnormal{(2)} below hold. 
\begin{enumerate}[(1)]
\item[\textnormal{(1)}] \begin{align*}
       \sigma'_{ij}&=\lambda^{-(\epsilon(i)+1)/2}\bar\sigma_{-j,-i}\lambda^{(\epsilon(j)+1)/2}~\forall i,j\in\Theta_{hb},\\
       \mu \sigma'_{0j}&=\bar\sigma_{-j,0}\lambda^{(\epsilon(j)+1)/2}~\forall j\in\Theta_{hb},\\
       \sigma'_{i0}&=\lambda^{-(\epsilon(i)+1)/2}\bar\sigma_{0,-i}\mu ~\forall i\in\Theta_{hb} \text { and}\\
       \mu \sigma'_{00}&=\bar\sigma_{00}\mu .
      \end{align*}
\item[\textnormal{(2)}] \begin{align*}
q(\sigma_{*j})\equiv (\delta_{0j},0)\bmod \Delta ~\forall j\in \Theta.
\end{align*} 
\end{enumerate}
\end{Lemma}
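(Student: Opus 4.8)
The plan is to treat the two defining conditions of $U_{2n+1}(R,\Delta)$ separately. I will show that the bilinear-form condition ``$b(\sigma u,\sigma v)=b(u,v)$ for all $u,v\in M$'' is equivalent to (1), with no reference to $\Delta$; and that, \emph{given} that condition, the quadratic congruence ``$q(\sigma u)\equiv q(u)$ modulo $\Delta$ for all $u\in M$'' is equivalent to (2). Combining the two gives the Lemma.

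For the first part, let $Q$ be the Gram matrix of $b$, so that $b(u,v)=u^{*}Qv$ and $Q_{ij}=b(e_i,e_j)$. A one-line computation from the defining formula for $b$ shows $Q_{ij}=0$ unless $i+j=0$ or $i=j=0$, the nonzero entries being $1$ (when $i\in\Theta_+$), $\lambda$ (when $i\in\Theta_-$) and $\mu$ (when $i=j=0$). As usual, $b(\sigma u,\sigma v)=b(u,v)$ for all $u,v$ is equivalent to $\sigma^{*}Q\sigma=Q$, and since $\sigma\in GL_{2n+1}(R)$ this is equivalent to $\sigma^{*}Q=Q\sigma^{-1}$ (right-multiply by $\sigma^{-1}$, respectively by $\sigma$). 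Now I read off the $(i,j)$-entry of each side: because of the shape of $Q$, $(\sigma^{*}Q)_{ij}$ collapses to a single term depending only on the sign of $j$, and $(Q\sigma^{-1})_{ij}$ to a single term depending only on the sign of $i$. Splitting into the nine cases according to whether $i,j$ lie in $\Theta_+$, $\Theta_-$ or $\{0\}$, and using $\overline{xy}=\bar y\bar x$ together with $\bar\lambda=\lambda^{-1}$ (Remark \ref{2}), these nine identities reindex precisely to the four displayed families of (1); the powers $\lambda^{\pm(\epsilon(\cdot)+1)/2}$ are exactly the bookkeeping for whether the relevant $Q$-entry was $1$ or $\lambda$. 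Hence $b$-preservation $\Leftrightarrow$ (1).

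For the second part, assume $b(\sigma u,\sigma v)=b(u,v)$ for all $u,v$ (equivalently, (1)). One direction is immediate: specialising $u=e_j$ and computing $q(e_j)=(\delta_{0j},0)$ directly from the formula for $q$ shows $q(\sigma u)\equiv q(u)$ modulo $\Delta$ for all $u$ implies (2). For the converse, I pass to the quotient $R$-quasimodule $\h/\Delta$; since $\Delta\supseteq\Delta_{min}$ and $\+$ is commutative modulo $\Delta_{min}$ (Remark \ref{10}(b)), $\h/\Delta$ is an abelian group under $\+$, and $\bullet$ descends to it. Define $\phi,\psi\colon M\to\h/\Delta$ by $\phi(u)=q(u)\+\Delta$ and $\psi(u)=q(\sigma u)\+\Delta$. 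By Lemma \ref{15}(2) one has $q(u+v)\equiv q(u)\+q(v)\+(0,b(u,v))$ modulo $\Delta_{min}$ and $q(ux)=q(u)\bullet x$; applying this also to $\sigma u,\sigma v$ and using $b(\sigma u,\sigma v)=b(u,v)$, both $\phi$ and $\psi$ satisfy, in $\h/\Delta$, the identities $f(u+v)=f(u)\+f(v)\+((0,b(u,v))\+\Delta)$ and $f(ux)=f(u)\bullet x$. Therefore the pointwise difference $\chi(u):=\psi(u)\minus\phi(u)$ in the abelian group $\h/\Delta$ is \emph{additive}, and the distributivity axiom of Definition \ref{3}(4) forces $\chi(ux)=\chi(u)\bullet x$. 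Since the $e_j$, $j\in\Theta$, generate $M$ as a right $R$-module and (2) says exactly $\chi(e_j)=0$ for every $j$, we conclude $\chi\equiv 0$, i.e. $q(\sigma u)\equiv q(u)$ modulo $\Delta$ for all $u$. Together with the first part, $\sigma\in U_{2n+1}(R,\Delta)$ iff $b$-preservation and $q$-congruence hold iff (1) and (2) hold.

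The routine but error-prone step is the nine-case reindexing in the first part — keeping the $\lambda$-powers and the positions of the bars straight, in particular the row/column swap coming from $(\sigma^{*})_{ij}=\overline{\sigma_{ji}}$. The only genuinely structural point is in the converse of the second part: the difference map $\chi$ must be formed \emph{after} passing to $\h/\Delta$, since $\+$ is not commutative on $\h$ itself; it is the normality of $\Delta$ and the inclusion $\Delta_{min}\subseteq\Delta$ that make $\chi$ well defined and additive, after which the ``a quadratic map is determined by its values on a basis and its associated bilinear form'' argument finishes it.
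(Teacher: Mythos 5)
Your proposal is correct and follows essentially the same route as the paper: the equivalence of (1) with $b$-preservation is exactly the Gram-matrix computation $\sigma^{*}Q=Q\sigma^{-1}$ carried out in the paper's proof, and your reduction of the $q$-congruence to the basis vectors via Lemma \ref{15}(2) is precisely the argument the paper compresses into ``this follows from (1), (2) and Lemma \ref{15}''. Your quotient-quasimodule formulation of that last step (forming the difference map in $\h/\Delta$ so that additivity and $\bullet$-compatibility are licit) is a clean and correct way of spelling out the detail the paper leaves implicit.
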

\begin{proof}
~\\
``$\Rightarrow$'':\\
Assume that $\sigma=\begin{pmatrix} A&B&C\\D&E&F\\G&H&I\end{pmatrix}\in \U_{2n+1}(R,\Delta)$ where $A,C,G,I\in \M_n(R)$, $B,H\in R^{n}$, $D,F\in{}^n\!R$ and $E\in R$. Then $b(\sigma u, \sigma v)=b(u,v)~\forall u,v \in M$ and $q(\sigma u)\equiv q(v)\bmod \Delta~\forall u\in M$. Let $\sigma^{-1}=\begin{pmatrix} A'&B'&C'\\D'&E'&F'\\G'&H'&I'\end{pmatrix}$ where $A'$ has the same size as $A$, $B'$ has the same 
size as $B$ and so on. One checks easily that $b(\sigma u, \sigma v)=b(u,v)~\forall u,v \in M$ is equivalent to
\begin{align*}
\begin{pmatrix} A'&B'&C'\\\mu D'&\mu E'&\mu F'\\ G'&H'&I'\end{pmatrix}=\begin{pmatrix} \bar\lambda p I^*p\lambda&\bar\lambda p F^*\mu &\bar\lambda p C^*p\\ H^*p\lambda&\E^*\mu &B^*p\\ p G^*p\lambda&pD^*\mu &pA^*p\end{pmatrix}
\end{align*}
which is clearly equivalent to condition (1). Further $q(\sigma u)\equiv q(u)\bmod \Delta~\forall u\in M$ implies condition (2), since $q(e_j)=(0,0)$ for any $j\neq 0$ and $q(e_0)=(1,0)$.\\
\\
``$\Leftarrow$'':\\
Assume that (1) and (2) hold. As shown under ``$\Rightarrow$'', (1) is equivalent to $b(\sigma u, \sigma v)=b(u,v)~\forall u,v \in M$. It remains to show that $q(\sigma u)\equiv q(u)\bmod  \Delta~\forall u\in M$. But this follows from (1), (2) and Lemma \ref{15}.
\end{proof}
\subsection{The elementary subgroup}\label{sec3.3}
We introduce the following notation. In Definition \ref{5} we defined an $R^{\bullet}$-module structure on $\h$. Let $(R,~\b{}~,\b{$\lambda$},\b{$\mu $})$ be the Hermitian ring defined in Remark \ref{2}(c). Let $\underline{\h}$ denote the Heisenberg group corresponding to $(R,~\b{}~,\b{$\lambda$},\b{$\mu $})$. The underlying set of both $\h$ and $\underline{\h}$ is $R\times R$. If we replace the Hermitian ring $(R,~\bar~,\lambda,\mu)$ by the Hermitian ring $(R,~\b{}~,\b{$\lambda$},\b{$\mu $})$, then we get another $R^{\bullet}$-module structure on $R\times R$. We denote the group operation (resp. scalar multiplication) defined by $(R,~\bar{}~,\lambda,\mu )$ on $R\times R$ by $\+_1$ (resp. $\circ_1$) and the group operation (resp. scalar multiplication) defined by $(R,~\b{}~,\b{$\lambda$},\b{$\mu $})$ on $R\times R$ by $\+_{-1}$ (resp. $\circ_{-1}$). Further we set
 $\Delta^{-1}:=\{(x,y)\in R\times R\mid(x,\bar y)\in \Delta\}$. One checks easily that  $\Delta^{-1}$ is an odd form parameter of $(R,~\b{}~,\b{$\lambda$},\b{$\mu $})$. Analogously, if $(I,\Omega)$ is an odd form ideal of $(R,\Delta)$, we set $\Omega^{-1}:=\{(x,y)\in R\times R\mid(x,\bar y)\in \Omega\}$. One checks easily that $\Omega^{-1}$ is an relative odd form parameter for $I$ of the Hermitian form ring $(R,\Delta^{-1})$. 
 
If $k,l\in \Omega$, let $e^{kl}$ denote the $(2n+1)\times(2n+1)$ matrix with $1$ in the $(k,l)$-th position and $0$ in all other positions.
\begin{Definition}\label{21}
If $i,j\in \Theta_{hb}$, $i\neq\pm j$ and $x\in R$, the element  \[T_{ij}(x):=e+xe^{ij}-\lambda^{(\epsilon(j)-1)/2}\bar x\lambda^{(1-\epsilon(i))/2}e^{-j,-i}\] of $\U_{2n+1}(R,\Delta)$ is called an {\it (elementary) short root matrix}. 
If $i\in \Theta_{hb}$ and $(x,y)\in \Delta^{-\epsilon(i)}$, the element \[T_{i}(x,y):=e+xe^{0,-i}-\lambda^{-(1+\epsilon(i))/2}\bar x\mu e^{i0}+ye^{i,-i}\] of $\U_{2n+1}(R,\Delta)$ is called an {\it (elementary) extra short root matrix}. The extra short root matrices of the kind \[T_{i}(0,y)=e+ye^{i,-i}\] are called {\it (elementary) long root matrices}. If an element of $\U_{2n+1}(R,\Delta)$ is a short or extra short root matrix, then it is called {\it elementary matrix}. The subgroup of $\U_{2n+1}(R,\Delta)$ generated by all elementary matrices is called the {\it elementary subgroup} and is denoted by $\EU_{2n+1}(R,\Delta)$. 
\end{Definition}
\begin{Remark}\label{22}
Set $\Lambda:=\Lambda(\Delta)$. One checks easily that $\phi_{2n}^{2n+1}(\EU_{2n}(R,\Lambda))\subseteq \EU_{2n+1}(R,\Delta)$ and $\phi_{2n+1}^{2n+3}(\EU_{2n+1}($ $R,\Delta))\subseteq \EU_{2n+3}(R,\Delta)$.
\end{Remark}
\begin{Lemma}\label{23}
The following relations hold for elementary matrices.
\begin{align*}
&T_{ij}(x)=T_{-j,-i}(-\lambda^{(\epsilon(j)-1)/2}\bar x\lambda^{(1-\epsilon(i))/2}), \tag{S1}\\
&T_{ij}(x)T_{ij}(y)=T_{ij}(x+y), \tag{S2}\\
&[T_{ij}(x),T_{kl}(y)]=e \text{ if } k\neq j,-i \text{ and } l\neq i,-j, \tag{S3}\\
&[T_{ij}(x),T_{jk}(y)]=T_{ik}(xy) \text{ if } i\neq\pm k, \tag{S4}\\
&[T_{ij}(x),T_{j,-i}(y)]=T_{i}(0,xy-\lambda^{(-1-\epsilon(i))/2}\bar y\bar x\lambda^{(1-\epsilon(i))/2}), \tag{S5}\\
&T_{i}(x_1,y_1)T_{i}(x_2,y_2)=T_{i}((x_1,y_1)\+_{-\epsilon(i)}(x_2,y_2)), \tag{E1}\\
&[T_{i}(x_1,y_1),T_{j}(x_2,y_2)]=T_{i,-j}(-\lambda^{-(1+\epsilon(i))/2}\bar x_1\mu x_2) \text{ if } i\neq\pm j, \tag{E2}\\
&[T_{i}(x_1,y_1),T_{i}(x_2,y_2)]=T_{i}(0,-\lambda^{-(1+\epsilon(i))/2}(\bar x_1\mu x_2-\bar x_2\mu x_1)), \tag{E3}\\
&[T_{ij}(x),T_{k}(y,z)]=e \text{ if } k\neq j,-i \text{ and} \tag{SE1}\\
&[T_{ij}(x),T_{j}(y,z)]=T_{j,-i}(z\lambda^{(\epsilon(j)-1)/2}\bar x\lambda^{(1-\epsilon(i))/2})T_{i}(y\lambda^{(\epsilon(j)-1)/2}\bar x\lambda^{(1-\epsilon(i))/2},xz\lambda^{(\epsilon(j)-1)/2}\bar x\lambda^{(1-\epsilon(i))/2}).\tag{SE2}\\
\end{align*}
\end{Lemma}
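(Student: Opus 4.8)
The plan is to verify each relation as an identity in $GL_{2n+1}(R)$ by multiplying out the explicit matrices of Definition \ref{21}, using only the multiplication rule $e^{ij}e^{kl}=\delta_{jk}e^{il}$ for matrix units. Write $T_{ij}(x)=e+a$ with $a=xe^{ij}-\lambda^{(\epsilon(j)-1)/2}\bar x\lambda^{(1-\epsilon(i))/2}e^{-j,-i}$ and $T_i(x,y)=e+c$ with $c=xe^{0,-i}-\lambda^{-(1+\epsilon(i))/2}\bar x\mu e^{i0}+ye^{i,-i}$. Since $i\neq\pm j$ one checks at once that $a^2=0$, so $T_{ij}(x)^{-1}=e-a$; a short computation gives $c^2=-\lambda^{-(1+\epsilon(i))/2}\bar x\mu x\,e^{i,-i}$ (a long-root nilpotent) and $c^3=0$, so $T_i(x,y)^{-1}=e-c+c^2$. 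With these finite expansions of the inverses, every commutator $[A,B]=ABA^{-1}B^{-1}$ reduces to a polynomial identity in matrix units, which one then simply reads off.

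For the short-root relations the computations are the classical ones. Relation (S1) is the defining formula for $T_{ij}(x)$ rewritten, using $\overline{\bar x}=\lambda x\bar\lambda$ and $\bar\lambda=\lambda^{-1}$ (Remark \ref{2}(a)). For (S2) and (S3) one observes that under the stated index hypotheses every product of a matrix unit occurring in the first generator with one occurring in the second vanishes, so the two generators add (for (S2)) or commute (for (S3)). For (S4) and (S5) exactly one mixed product survives --- $e^{ij}e^{jk}=e^{ik}$ for (S4), and $e^{ij}e^{j,-i}=e^{i,-i}$ together with its partner $e^{i,-j}e^{-j,-i}=e^{i,-i}$ for (S5) --- and the expansion of $ABA^{-1}B^{-1}$ collapses to $T_{ik}(xy)$, respectively to $T_i\bigl(0,\,xy-\lambda^{(-1-\epsilon(i))/2}\bar y\bar x\lambda^{(1-\epsilon(i))/2}\bigr)$. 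In the latter case one should also check that the resulting parameter genuinely defines a long-root matrix, i.e.\ lies in $\Delta^{-\epsilon(i)}$; this is automatic because the left-hand side is a commutator of elements of $U_{2n+1}(R,\Delta)$, but it is also visible directly from the shape of $\Delta_{min}$ in Definition \ref{9}.

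For the extra-short and mixed relations the computations carry one additional term, since $c^2\neq0$, but are otherwise of the same type. For (E1) one expands $T_i(x_1,y_1)T_i(x_2,y_2)=e+c_1+c_2+c_1c_2$; collecting the coefficients of $e^{0,-i}$, $e^{i0}$ and $e^{i,-i}$ and comparing with Definition \ref{5} identifies the product as $T_i$ of the Heisenberg sum $(x_1,y_1)\+_{-\epsilon(i)}(x_2,y_2)$, where the relevant odd quadruple is the given one if $\epsilon(i)=-1$ and its conjugate (Remarks \ref{2}(c) and \ref{6}(b)) if $\epsilon(i)=1$; the cross term $c_1c_2$ is precisely what produces the $-\bar x_1\mu x_2$ correction in the second component. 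Relations (E2) and (E3) follow by expanding the respective commutators and reading off the short-root, respectively long-root, matrix with parameter built from $\bar x_1\mu x_2$. Relation (SE1) is a vanishing statement proved exactly like (S3), and (SE2) is the most laborious case: one expands $[T_{ij}(x),T_j(y,z)]$, keeping track of which units among $e^{0,-i}$, $e^{j0}$, $e^{i,-i}$, $e^{j,-j}$, $e^{j,-i}$ and their conjugates can be formed, and the stated right-hand side emerges after regrouping.

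No step is conceptually difficult; the only real work is the bookkeeping --- keeping the exponents $(\epsilon(j)-1)/2$, $(1-\epsilon(i))/2$ and $(1\pm\epsilon(i))/2$ and the attached powers of $\lambda$ and $\mu$ consistent, and confirming that every generator appearing on a right-hand side is well defined, i.e.\ that its parameter lies in $\Delta^{-\epsilon(i)}$ (here the explicit form of $\Delta_{min}$ and Lemma \ref{7} are used). To cut down the case analysis one may first establish every relation in the case where all the indices involved are positive and then deduce the remaining sign patterns from (S1) and from passing to the conjugate odd quadruple as in Remark \ref{2}(c).
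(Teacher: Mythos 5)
Your proposal is correct and follows essentially the same route as the paper, whose proof is simply "straightforward computation": expand each generator as $e$ plus a nilpotent combination of matrix units (with $a^2=0$ for short roots and $c^3=0$, $c^2$ a long-root term, for extra short roots) and read off the surviving products. One minor imprecision: for (S4), as for (S5), \emph{two} mixed products survive, namely $e^{ij}e^{jk}=e^{ik}$ and its conjugate partner $e^{-k,-j}e^{-j,-i}=e^{-k,-i}$, which together assemble into $T_{ik}(xy)$; this does not affect the validity of the argument.
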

\begin{proof}
Straightforward computation.
\end{proof}
\begin{Remark}
It follows from Lemma \ref{23} (E1) that for each $i\neq 0$, there is an embedding
\begin{align*}
T_i:\Delta^{-\epsilon(i)}&\rightarrow \U_{2n+1}(R,\Delta)\\
(x,y)&\mapsto T_i(x,y).
\end{align*}
For $\epsilon(i)=1$ (resp. $-1$), the groups $T_i(\Delta^{-\epsilon(i)})$ are upper (resp. lower) triangular matrices in $\U_{2n+1}(R,\Delta)$. Such triangular matrix groups are historically called Heisenberg groups. See \cite[Section 7($1^0$)]{bak-vavilov}. Thus the groups $\h^1=\h$ and $\h^{-1}=\underline{\h}$ which we are calling Heisenberg groups, actually contain isomorphic copies of all the groups historically called Heisenberg groups, although they themselves historically are not Heisenberg groups and in particular cannot in general be embedded in unitary groups.
\end{Remark}
\begin{Definition}\label{24}
Let $i,j\in\Theta_{hb}$ such that $i\neq\pm j$. Define
\begin{align*}
P_{ij}:=&e-e^{ii}-e^{jj}-e^{-i,-i}-e^{-j,-j}+e^{ij}-e^{ji}+\lambda^{(\epsilon(i)-\epsilon(j))/2}e^{-i,-j}-\lambda^{(\epsilon(j)-\epsilon(i))/2}e^{-j,-i}\\
=&T_{ij}(1)T_{ji}(-1)T_{ij}(1)\in \EU_{2n+1}(R,\Delta).                                                                                                                                                                                                                                                                                                                                                                                                                                                                                             
\end{align*}
It is easy to show that $(P_{ij})^{-1}=P_{ji}$. 
\end{Definition}
\begin{Lemma}\label{25}
Let $i,j,k\in\Theta_{hb}$ such that $i\neq \pm j$ and $k\neq \pm i,\pm j$. Let $x\in R$ and $(y,z)\in\Delta^{-\epsilon(i)}$. Then 
\begin{enumerate}[(1)]
\item $^{P_{ki}}T_{ij}(x)=T_{kj}(x)$,
\item $^{P_{kj}}T_{ij}(x)=T_{ik}(x)$ and
\item $^{P_{-k,-i}}T_{i}(y,z)=T_{k}(y,\lambda^{(\epsilon(i)-\epsilon(k))/2}z)$.
\end{enumerate}
\end{Lemma}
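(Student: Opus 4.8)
The statement to be proved is Lemma \ref{25}, which describes how the permutation matrices $P_{ki}$ and $P_{-k,-i}$ act by conjugation on short and extra short root matrices. The natural approach is a direct computation, but organized so that almost all cases follow from a single one via the symmetry relations already established. First I would record that $P_{ki} = T_{ki}(1)T_{ik}(-1)T_{ki}(1)$ lies in $EU_{2n+1}(R,\Delta)$, so conjugation by it maps root matrices to root matrices; the content is to identify the image. For part (1), I would compute $^{P_{ki}}T_{ij}(x)$ by expanding $P_{ki}$ as a product of three short root matrices and applying the commutator relations (S2)--(S5) from Lemma \ref{23} repeatedly. Concretely, since $k \neq \pm i, \pm j$ and $i \neq \pm j$, the indices $k,i,j,-k,-i,-j$ are pairwise distinct, so one can push $T_{ij}(x)$ past each of the three factors using (S3) where the indices do not collide and (S4) where they do. The net effect is to replace the source index $i$ by $k$, giving $T_{kj}(x)$.

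**Reducing the work.** Rather than redo this for part (2), I would deduce it from part (1) using the symmetry relation (S1): $T_{ij}(x) = T_{-j,-i}(-\lambda^{(\epsilon(j)-1)/2}\bar x\lambda^{(1-\epsilon(i))/2})$. Conjugating by $P_{kj}$ and noting $(P_{kj})^{-1} = P_{jk}$ together with the already-proved statement (1) applied to the index pair $(-j,-i)$ and the new "mover" index $-k$ should yield $T_{ik}(x)$ after unwinding the $\lambda$-twists; alternatively, one observes that $P_{kj}$ and $P_{ki}$ are related by $P_{kj} = P_{ij}^{-1} P_{ki} P_{ij}$ up to elements commuting with everything relevant, which converts a source-index change into a target-index change. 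Either route avoids a second long expansion. I would carry out whichever bookkeeping is cleaner — I expect the direct expansion via (S1) and (1) to be the more transparent.

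**The extra short root case.** Part (3) requires $^{P_{-k,-i}}T_i(y,z) = T_k(y,\lambda^{(\epsilon(i)-\epsilon(k))/2}z)$. Here $T_i(y,z) = e + ye^{0,-i} - \lambda^{-(1+\epsilon(i))/2}\bar y\mu e^{i0} + ze^{i,-i}$, and $P_{-k,-i} = T_{-k,-i}(1)T_{-i,-k}(-1)T_{-k,-i}(1)$. The computation is analogous to (1) but must track the extra short root structure: I would use the mixed relations (SE1) and (SE2) from Lemma \ref{23} to commute $T_i(y,z)$ past each factor of $P_{-k,-i}$. The key subtlety is the appearance of the symmetry factor $\lambda^{(\epsilon(i)-\epsilon(k))/2}$ in the second coordinate: this arises because replacing the index $i$ (appearing in position $e^{i,-i}$) by $k$ forces a $\lambda$-conjugation on the long-root part, exactly as encoded in relations (S1) and (SE2). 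I would verify the $\lambda$-exponent by checking the two cases $\epsilon(i) = \epsilon(k)$ and $\epsilon(i) = -\epsilon(k)$ separately, since the half-integer exponents only make sense after this case split.

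**Main obstacle.** The genuine difficulty is not conceptual but combinatorial: keeping the $\lambda$-twist factors $\lambda^{(\epsilon(j)-1)/2}$, $\lambda^{(1-\epsilon(i))/2}$, and $\lambda^{-(1+\epsilon(i))/2}$ consistent through a triple conjugation, especially in part (3) where the extra short root matrix has four nontrivial off-diagonal blocks and relation (SE2) itself produces a product of two extra short root matrices. I expect that the safest strategy is to verify (3) by direct matrix multiplication $P_{-k,-i}\, T_i(y,z)\, P_{-k,-i}^{-1}$ using the explicit block form of $P_{-k,-i}$ from Definition \ref{24}, reading off the resulting matrix and recognizing it as $T_k(y,\lambda^{(\epsilon(i)-\epsilon(k))/2}z)$, rather than via the elementary relations; the authors' remark "straightforward computation" after Lemma \ref{23} suggests they favor exactly this explicit-matrix style, and it sidesteps the risk of sign or $\lambda$-exponent errors in iterated commutator manipulations.
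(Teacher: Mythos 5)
Your plan for part (1) is exactly the paper's proof: expand $P_{ki}=T_{ki}(1)T_{ik}(-1)T_{ki}(1)$ and push $T_{ij}(x)$ through using (S2)--(S4), which works because $k,i,j,-k,-i,-j$ are pairwise distinct. For part (3) the paper stays entirely inside the relation calculus (a chain of (SE1), (S1), (SE2), (S5), (E2), (E1), (S2) applications), whereas you propose explicit matrix multiplication against the block form of $P_{-k,-i}$ from Definition \ref{24}; that is a genuinely different but equally valid route, and arguably the safer one given how many $\lambda$-exponents the relation-based chain accumulates. The one place where your plan is shakier than the paper is part (2). The paper simply redoes the three-step expansion for $P_{kj}=T_{kj}(1)T_{jk}(-1)T_{kj}(1)$ (it is no longer than the computation for (1)), while you insist on avoiding a second expansion via one of two shortcuts, neither of which is as clean as you suggest.

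Concretely: applying (1) to the pair $(-j,-i)$ with mover $-k$ gives a statement about conjugation by $P_{-k,-j}$, not by $P_{kj}$, and these two matrices are \emph{not} equal when $\epsilon(j)\neq\epsilon(k)$ — from Definition \ref{24}, $P_{kj}$ carries the twist $\lambda^{(\epsilon(k)-\epsilon(j))/2}$ on the $e^{-k,-j}$ entry and $1$ on $e^{kj}$, while $P_{-k,-j}$ has these the other way around. Since (S1) applied to $T_{kj}(1)$ gives $T_{-j,-k}(-\lambda^{(\epsilon(j)-\epsilon(k))/2})$ rather than $T_{-j,-k}(-1)$, the discrepancy lands precisely on the positions $(i,k)$ and $(-k,-i)$ of the conjugated matrix, and an uncareful "unwinding" produces $T_{ik}(x\lambda^{\pm(\epsilon(j)-\epsilon(k))/2})$ instead of the claimed $T_{ik}(x)$. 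Your second alternative, $P_{kj}=P_{ij}^{-1}P_{ki}P_{ij}$ "up to elements commuting with everything relevant," has the same sign/$\lambda$ bookkeeping hidden in the "up to," and moreover interpreting $^{P_{ij}^{-1}}$ on short root matrices is essentially statement (2) again, so that route risks circularity. None of this is fatal — either shortcut can be repaired, or you can fall back to the five-line direct expansion the paper uses — but as written, part (2) is the step of your proposal that is not yet a proof.
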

\begin{proof}
Straightforward.
\end{proof}
\subsection{Relative elementary subgroups}
\begin{Definition}
Let $(I,\Omega)$ denote an odd form ideal of $(R,\Delta)$. A short root matrix $T_{ij}(x)$ is called {\it $(I,\Omega)$-elementary} if $x\in I$. An extra short root matrix $T_{i}(x,y)$ is called {\it $(I,\Omega)$-elementary} if $(x,y)\in \Omega^{-\epsilon(i)}$. If an element of $\U_{2n+1}(R,\Delta)$ is an $(I,\Omega)$-elementary short or extra short root matrix, then it is called {\it $(I,\Omega)$-elementary matrix}. The subgroup $\EU_{2n+1}(I,\Omega)$ of $\EU_{2n+1}(R,\Delta)$ generated by the $(I,\Omega)$-elementary matrices is called the {\it preelementary subgroup of level $(I,\Omega)$}. Its normal closure $\EU_{2n+1}((R,\Delta),(I,\Omega))$ in $\EU_{2n+1}(R,\Delta)$ is called {\it elementary subgroup of level $(I,\Omega)$}.
\end{Definition}
\section{Congruence subgroups}\label{sec4}
In this section $(R,\Delta)$ denotes a Hermitian form ring and $n$ a natural number. $M$ denotes the free module $R^{2n+1}$ with the ordered basis $(e_1,\dots,e_n,e_0,e_{-n},\dots,e_{-1})$ of Subsection \ref{3.2}.
\subsection{Principal, normalized principal and full congruence subgroups}
In this subsection $(I,\Omega)$ denotes an odd form ideal of $(R,\Delta)$. If $\sigma\in \M_{2n+1}(R)$, we call the matrix $(\sigma_{ij})_{i,j\in\Theta_{hb}}\in \M_{2n}(R)$ the {\it hyperbolic part of $\sigma$} and denote it by $\sigma_{hb}$. Set $M(R,\Delta):=\{u\in M\mid u_0\in J(\Delta)\}$, $M(I):=\{u\in M\mid u_i\in I~\forall i\in\Theta_{hb}\}$ and $M(I,\Omega):=\{u\in M(I)\mid u_0\in J(\Omega)\}$. $M(R,\Delta)$, $M(I)$ and $M(I,\Omega)$ are clearly submodules of $M$. Note that if $\sigma\in \U_{2n+1}(R,\Delta)$, $u\in M(R,\Delta)$, $v\in M(I)$ and $w\in M(I,\Omega)$, then $\sigma u\in M(R,\Delta)$ and $\sigma v\in M(I)$, but it is not necessary that $\sigma w\in M(I,\Omega)$. Define 
\[I_0:=\{x\in R\mid xJ(\Delta)\subseteq I\}\]
and
\[\tilde I_0:=\{x\in R\mid\overline{J(\Delta)}\mu x\in I_0\}.\]
Then $I_0$ is a left ideal of $R$ and $\tilde I_0$ an additive subgroup of $R$. If $J(\Delta)=R$ then $I_0=I$ and $\tilde I_0= \tilde I$. Let $u,v\in M$ and $\sigma,\tau\in \U_{2n+1}((R,\Delta),(I,\Omega))$. We write $u\equiv v\bmod I,\tilde I$ (resp. $u\equiv v\bmod I_0,\tilde I_0$) if and only if $u_i\equiv v_i\bmod I~\forall i\in \Theta_{hb}$ and $u_0\equiv v_0\bmod \tilde I$ (resp. $u_i\equiv v_i\bmod I_0~\forall i\in \Theta_{hb}$ and $u_0\equiv v_0\bmod \tilde I_0$). We write $\sigma\equiv \tau\bmod I,\tilde I,I_0,\tilde I_0$ if and only if $\sigma_{*j}\equiv \tau_{*j}\bmod I,\tilde I~\forall j\in \Theta_{hb}$ and $\sigma_{*0}\equiv \tau_{*0}\bmod I_0,\tilde I_0$. Further if $w,x\in M^t:={}^{2n}\!R$, we write $w\equiv x\bmod I,I_0$ (resp. $w\equiv x\bmod \tilde I,\tilde I_0$) if and only if $w_j\equiv x_j\bmod I~\forall j\in \Theta_{hb}$ and $w_0\equiv x_0\bmod I_0$ (resp. $w_j\equiv x_j\bmod \tilde I~\forall j\in \Theta_{hb}$ and $w_0\equiv x_0\bmod \tilde I_0$).
\begin{Definition}\label{36}
The subgroup $\U_{2n+1}((R,\Delta),(I,\Omega)):=$
\[\{\sigma\in \U_{2n+1}(R,\Delta)\mid\sigma_{hb}\equiv e_{hb}\bmod  I\text{ and }q(\sigma u)\equiv q(u)\bmod \Omega~\forall u\in M(R,\Delta)\}\]
of $\U_{2n+1}(R,\Delta)$ is called {\it the principal congruence subgroup of level $(I,\Omega)$}. The subgroup $\NU_{2n+1}((R,\Delta),$ $(I,\Omega)): =$
\[ \Normalizer_{\U_{2n+1}(R,\Delta)}(\U_{2n+1}((R,\Delta),(I,\Omega)))\]
of $\U_{2n+1}(R,\Delta)$ is called the {\it normalized principal congruence subgroup of level $(I,\Omega)$}. The subgroup $\CU_{2n+1}((R,\Delta),(I,\Omega)):=$
\[ \{\sigma\in \NU_{2n+1}((R,\Delta),(I,\Omega))\mid [\sigma,\EU_{2n+1}(R,\Delta)]\subseteq \U_{2n+1}((R,\Delta),(I,\Omega))\}\]
of $\U_{2n+1}(R,\Delta)$ is called the {\it full congruence subgroup of level $(I,\Omega)$}.
\end{Definition}
\begin{Remark}\label{42}
In many interesting situations, $\NU_{2n+1}((R,\Delta),(I,\Omega))$ equals $\U_{2n+1}(R,\Delta)$, e.g. in the situations (1)-(3) in Example \ref{18}. Further the equality holds if $\Omega=\Omega^I_{max}$ or $J(\Omega)\subseteq I$ (true e.g. if $\Omega=\Omega^I_{min}$). But it can also happen that $\NU_{2n+1}((R,\Delta),(I,\Omega))\neq \U_{2n+1}(R,\Delta)$, see Example \ref{174}.
\end{Remark}
\subsection{The action of $\U_{2n+1}(R,\Delta)$ on odd form ideals} 
\begin{Lemma}\label{37}
Let $I$ denote an involution invariant ideal of $R$. Let $u,v\in M$ such that $u\in M(I)$ or $v\in M(I)$. Then 
\[q(u+v)\equiv q(u)\+ q(v)\+(0,b(u,v))\bmod \Omega^I_{\min}.\]
\end{Lemma}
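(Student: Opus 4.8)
The statement to prove is that for $u,v\in M$ with $u\in M(I)$ or $v\in M(I)$, one has $q(u+v)\equiv q(u)\+ q(v)\+(0,b(u,v))\pmod{\Omega^I_{\min}}$. By Lemma \ref{15}(2) we already know that $q(u+v)\equiv q(u)\+ q(v)\+(0,b(u,v))\pmod{\Delta_{\min}}$ for \emph{all} $u,v\in M$, with no restriction. So the entire content of the lemma is the sharpening of the error term from $\Delta_{\min}$ down to the smaller subquasimodule $\Omega^I_{\min}$ under the hypothesis that one of the two arguments lies in $M(I)$. The plan is therefore to revisit the computation underlying Lemma \ref{15}(2) and track where the discrepancy $(0,z)$ with $z\in\Delta_{\min}$ actually comes from, then show $z$ in fact lands in $\Omega^I_{\min}$.

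First I would expand both sides explicitly using the formula $q(w)=(w_0,\sum_{i=1}^n\bar w_i w_{-i})$ from the definition and the formula for $\+$ from Definition \ref{5}, together with Lemma \ref{7}(4) if more than two summands appear. The first coordinate is additive: $q_1(u+v)=u_0+v_0=q_1(u)+q_1(v)$, and the first coordinate of $q(u)\+ q(v)\+(0,b(u,v))$ is also $u_0+v_0$, so the first coordinates match exactly. For the second coordinate, expanding $\sum_i\overline{(u_i+v_i)}(u_{-i}+v_{-i})=\sum_i\bar u_i u_{-i}+\sum_i\bar v_i v_{-i}+\sum_i\bar u_i v_{-i}+\sum_i\bar v_i u_{-i}$, and comparing with the second coordinate of $q(u)\+ q(v)\+(0,b(u,v))$ (which one computes via Definition \ref{5} and the definition of $b$), the difference will be an expression of the shape $x-\bar x\lambda$ for a specific $x\in R$ built from the hyperbolic coordinates of $u$ and $v$ — this is exactly why the error lies in $\Delta_{\min}$ in the unrestricted case.

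The key step is then to observe that, because $u\in M(I)$ or $v\in M(I)$, this specific $x$ lies in $I$: each summand contributing to $x$ is a product of a coordinate of $u$ with a coordinate of $v$ (a ``cross term''), and $I$ is an ideal, so as soon as one factor of each such product is in $I$ the whole term is in $I$. Hence $x\in I$ and $x-\bar x\lambda\in\{(0,x-\bar x\lambda)\mid x\in I\}\subseteq\Omega^I_{\min}$ by Definition \ref{11}. Therefore $q(u+v)\minus\bigl(q(u)\+ q(v)\+(0,b(u,v))\bigr)\in\Omega^I_{\min}$, which is the claim (using Remark \ref{10}(b) that $\+$ is commutative modulo $\Delta_{\min}$, hence modulo the larger $\Omega^I_{\min}$, to freely rearrange and to pass between the congruence written with $\+$ and the one written with $\minus$).

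I do not expect any serious obstacle here; it is essentially a bookkeeping refinement of Lemma \ref{15}(2). The one point requiring a little care is keeping straight, in the noncommutative setting, the exact form of the ``correction'' term $x-\bar x\lambda$ and verifying that it is genuinely of that shape rather than merely in $\Delta_{\min}$ in some less explicit way — but since $\Delta_{\min}$ is by definition $\{(0,x-\bar x\lambda)\mid x\in R\}$, the explicit computation in step two automatically delivers the representative $x$, and the only thing left is the trivial remark that this $x$ is an $I$-linear combination of products once a coordinate from $M(I)$ is present. A second minor point is to make sure that all reorderings of the $\+$-sum are justified modulo $\Omega^I_{\min}$; this is immediate from $\Delta_{\min}\subseteq\Omega^I_{\min}$ together with Lemma \ref{7}(3).
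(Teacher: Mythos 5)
Your proposal is correct and follows essentially the same route as the paper: the paper's proof is exactly the direct computation you describe, showing that $q(u+v)\minus\bigl(q(u)\+ q(v)\+(0,b(u,v))\bigr)=(0,x-\bar x\lambda)$ with $x=\sum_{i=1}^n\bar v_iu_{-i}$, and then observing that $x\in I$ (using that $I$ is a two-sided, involution invariant ideal) so the error term lies in $\Omega^I_{\min}$. The only detail worth making explicit is that when $v\in M(I)$ one needs the involution invariance of $I$ to conclude $\bar v_i\in I$, not merely that $I$ is an ideal.
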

\begin{proof}
A straightforward computation shows that
\[q(u+v)\minus(q(u)\+ q(v)\+(0,b(u,v)))=(0,\sum\limits_{i=1}^n\bar v_iu_{-i}-\overline{\sum\limits_{i=1}^n\bar v_iu_{-i}}\lambda)\in\Omega^I_{\min}.\]
\end{proof}
\begin{Lemma}\label{39}
Let $\sigma\in \U_{2n+1}(R,\Delta)$ and $(I,\Omega)$ denote an odd form ideal of $(R,\Delta)$. Then $\sigma\in \U_{2n+1}((R,\Delta),$ $(I,\Omega))$ if and only if the conditions \textnormal{(1)} and \textnormal{(2)} below hold. 
\begin{enumerate}[(1)]
\item[\textnormal{(1)}] $\sigma_{hb}\equiv e_{hb}\bmod  I$.
\item[\textnormal{(2)}] $q(\sigma_{*j})\in\Omega~\forall j\in \Theta_{hb}$ and $(q(\sigma_{*0})\minus (1,0))\circ x\in\Omega~\forall x\in J(\Delta)$.
\end{enumerate}
\end{Lemma}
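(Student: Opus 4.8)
The plan is to treat the two implications separately; throughout I use that $\Omega$ is an $R$-subquasimodule of $\h$ containing $\Omega^I_{min}$, and that $\bullet$ respects $\minus$, that is $(a\minus b)\bullet x=(a\bullet x)\minus(b\bullet x)$, which follows by applying axiom (4) of Definition \ref{3} to $(a\minus b)\+ b=a$.

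For ``$\Rightarrow$'', condition (1) is built into Definition \ref{36}, so only (2) needs proof, and it is obtained by evaluating the defining condition $q(\sigma u)\equiv q(u)\pmod{\Omega}$ on the vectors $u=e_j$ $(j\in\Theta_{hb})$ and $u=e_0x$ $(x\in J(\Delta))$, all of which lie in $M(R,\Delta)$. Since $\sigma e_j=\sigma_{*j}$ and $q(e_j)=(0,0)$, the first choice gives $q(\sigma_{*j})\in\Omega$; since $q(\sigma(e_0x))=q(\sigma_{*0})\bullet x$ (by Lemma \ref{15}(2)) and $q(e_0x)=(1,0)\bullet x$, the second choice, after cancelling $(1,0)\bullet x$ and using compatibility of $\bullet$ with $\minus$, gives $(q(\sigma_{*0})\minus(1,0))\bullet x\in\Omega$.

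For ``$\Leftarrow$'', assume (1) and (2). The crucial preliminary — and the step I expect to be the main obstacle — is to show that
\[ \sigma_{i0}J(\Delta)\subseteq I\text{ for all }i\in\Theta_{hb}, \]
equivalently $\sigma_{i0}\in I_0$. Condition (1) controls only the hyperbolic $2n\times 2n$ block of $\sigma$, and pulling the $0$-th column into $I_0$ requires combining (1) with (2) and with the odd-unitary relations of Lemma \ref{20}(1) — which express each $\sigma_{i0}$ ($i\in\Theta_{hb}$) as a left multiple of $\mu$ and give $\sigma'_{0,-i}\in J(\Delta)$ — together with the consequences $\sigma_{0k}\in J(\Omega)\subseteq\tilde I$ and $(\sigma_{00}-1)J(\Delta)\subseteq J(\Omega)$ of (2) and the defining inclusion $\overline{J(\Delta)}\mu\tilde I\subseteq I$ of $\tilde I$.

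Granting this, fix $u\in M(R,\Delta)$ and expand $q(\sigma u)=q\big(\sum_{j\in\Theta}\sigma_{*j}u_j\big)$ and $q(u)=q\big(\sum_{j\in\Theta}e_ju_j\big)$ in parallel by iterating the addition formula of Lemma \ref{15}(2), keeping the explicit corrections $(0,\xi-\bar\xi\lambda)$ from the proof of Lemma \ref{37}. The cross terms match, since $b(\sigma_{*j}u_j,\sigma_{*k}u_k)=\bar u_j\,b(\sigma_{*j},\sigma_{*k})\,u_k=\bar u_j\,b(e_j,e_k)\,u_k=b(e_ju_j,e_ku_k)$ by $\sigma$-invariance of $b$, so they cancel in $q(\sigma u)\minus q(u)$. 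The preliminary inclusion, together with (1) and $\overline{J(\Delta)}\mu\tilde I\subseteq I$, guarantees that between the two expansions the corrections — and likewise the commutators (Lemma \ref{7}(3)) needed to reorder the $\+$-chains — change only by elements of $\Omega^I_{min}$. Hence $q(\sigma u)\minus q(u)\equiv\overset{.}{\sum}_{j\in\Theta}\big((q(\sigma_{*j})\minus q(e_j))\bullet u_j\big)\pmod{\Omega^I_{min}}$; the $j\in\Theta_{hb}$ summands on the right are $q(\sigma_{*j})\bullet u_j\in\Omega$ and the $j=0$ summand is $(q(\sigma_{*0})\minus(1,0))\bullet u_0\in\Omega$ (note $u_0\in J(\Delta)$), both by (2). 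Since $\Omega$ is $\+$- and $\bullet$-stable and contains $\Omega^I_{min}$, we conclude $q(\sigma u)\minus q(u)\in\Omega$.
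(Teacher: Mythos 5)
Your overall architecture is the same as the paper's (which disposes of this lemma in two lines, citing Lemma \ref{37} and the proof of Lemma \ref{38}): evaluate the defining congruence on $e_j$ and $e_0x$ for the forward direction, and expand $q(\sigma u)$ and $q(u)$ in parallel for the converse. The forward direction is complete, and the final assembly of the converse is sound \emph{granted} the preliminary inclusion $\sigma_{i0}J(\Delta)\subseteq I$ — you are right that this is the crux, and right that the paper glosses over it.

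However, the recipe you give for that inclusion does not close as stated. From Lemma \ref{20} (applied to $\sigma^{-1}$) one gets $\sigma_{i0}=\lambda^{-(\epsilon(i)+1)/2}\,\overline{\sigma'_{0,-i}}\,\mu$, but knowing only $\sigma'_{0,-i}\in J(\Delta)$, the inclusion $\overline{J(\Delta)}\mu\tilde I\subseteq I$ would require $u_0\in\tilde I$, whereas you only have $u_0\in J(\Delta)$. What you actually need is the stronger fact $\sigma'_{0,-i}\in\tilde I$: then $\overline{u_0}\mu\sigma'_{0,-i}\in I$ for every $u_0\in J(\Delta)$, and applying the involution (using $\mu=\bar\mu\lambda$, $\bar{\bar u}_0=\lambda u_0\bar\lambda$ and the involution-invariance of $I$) yields $\overline{\sigma'_{0,-i}}\mu u_0\in I$, hence $\sigma_{i0}u_0\in I$. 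To get $\sigma'_{0,-i}\in\tilde I$, expand $0=(\sigma^{-1}\sigma)_{0,-i}=\sum_k\sigma'_{0k}\sigma_{k,-i}$ and note that $RI\subseteq\tilde I$ (since $I$ is two-sided): the terms with $k\in\Theta_{hb}$, $k\neq -i$, lie in $RI$ by (1); the term $\sigma'_{0,-i}\sigma_{-i,-i}$ differs from $\sigma'_{0,-i}$ by $\sigma'_{0,-i}(\sigma_{-i,-i}-1)\in RI$; and the term $\sigma'_{00}\sigma_{0,-i}$ lies in $\tilde I$ because $\mu\sigma'_{00}=\bar\sigma_{00}\mu$ (Lemma \ref{20}) and $\sigma_{00}J(\Delta)\subseteq J(\Delta)$ (which follows from your observation $(\sigma_{00}-1)J(\Delta)\subseteq J(\Omega)$), so that $\overline{J(\Delta)}\mu\sigma'_{00}\sigma_{0,-i}=\overline{\sigma_{00}J(\Delta)}\mu\sigma_{0,-i}\subseteq\overline{J(\Delta)}\mu\sigma_{0,-i}\subseteq I$, using $\sigma_{0,-i}\in J(\Omega)\subseteq\tilde I$. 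With $\sigma'_{0,-i}\in\tilde I$ in hand, the rest of your argument goes through.
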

\begin{proof}
~\\
``$\Rightarrow$'':\\
Assume that $\sigma\in \U_{2n+1}((R,\Delta),(I,\Omega))$. Then (1) holds and $q(\sigma u)\minus q(u)\in \Omega~\forall u\in M$. Clearly $q(\sigma u)\minus q(u)\in \Omega~\forall u\in M$ implies (2), since $q(e_j)=(0,0)$ for any $j\neq 0$ and $q(e_0)=(1,0)$.\\
\\
``$\Leftarrow$'':\\
Assume that (1) and (2) hold. We have to show that $q(\sigma u)\equiv q(u)\bmod  \Omega~\forall u\in M$. But this follows from (1), (2) and Lemma \ref{37}.
\end{proof}
\begin{Remark}\label{40}
Let $\sigma\in \U_{2n+1}(R,\Delta)$. Lemma \ref{39} implies that $\sigma\in \U_{2n+1}((R,\Delta),(I,\Omega^I_{max}))$ if and only if the conditions (1) and (2) below hold. 
\begin{enumerate}[(1)]
\item $\sigma_{hb}\equiv e_{hb}\bmod  I$.
\item $\sigma_{0*}\equiv e_0^t\bmod \tilde I,\tilde I_0$.
\end{enumerate}
It follows from (2) that if $\sigma\in \U_{2n+1}((R,\Delta),(I,\Omega^I_{max}))$, then $\sigma_{*0}\equiv e_0\bmod I_0,\tilde I_0$ and therefore $\sigma\equiv e\bmod I,\tilde I,I_0,\tilde I_0 $.
\end{Remark}
\begin{Definition}\label{999}
Let $(I,\Omega)$ be an odd form ideal and $\sigma\in \U_{2n+1}(R,\Delta)$. Then 
\begin{align*}
^{\sigma}\Omega:=&\{q(\sigma_{*0}x)\+(0,y)\mid (x,y)\in\Omega\}\+\Omega_{min}^I\\
=&\{(q(\sigma_{*0})\minus (1,0))\circ x\+(x,y)\mid (x,y)\in\Omega\}\+\Omega_{min}^I
\end{align*}
is called {\it the relative odd form parameter for $I$ defined by $\Omega$ and $\sigma$}.
\end{Definition}
\begin{Remark}
~\\
\vspace{-0.6cm}
\begin{enumerate}[(a)]
\item One checks easily that ${}^{\sigma}\Omega$ is a relative odd form parameter for $I$.
\item $^{\sigma}\Omega$ depends not only on $\sigma$ and $\Omega$ but also on $I$, although this is not expressed in the notation.
\end{enumerate}
\end{Remark}
\begin{Lemma}\label{38}
Let $I$ denote an involution invariant ideal, $\sigma\in \U_{2n+1}(R,\Delta)$ and $u\in M(I)$. Then \[q(\sigma u)\minus q(u)\equiv (q(\sigma_{*0})\minus (1,0))\circ u_0\bmod \Omega_{min}^I.\]
\end{Lemma}
\begin{proof}
The assertion of the lemma can be shown easily using Lemma \ref{37}.
\end{proof}
\begin{Lemma}\label{45}
Let $I$ denote an involution invariant ideal and $\FP(I)$ the set of all relative odd form parameters for $I$. Then the map 
\begin{align*}
\U_{2n+1}(R,\Delta)\times \FP(I)&\rightarrow \FP(I)\\
(\sigma,\Omega)&\mapsto {}^{\sigma}\Omega
\end{align*}
is a (left) group action.
\end{Lemma}
\begin{proof}
In Definition \ref{5} we have defined the Heisenberg group $\h$ whose underlying set is $R\times R$. Similarly one can define an $R^{\bullet}$-module structure on the set $M\times R$ (cf. \cite[p. 4753]{petrov}). The addition is given by 
\begin{align*}
\+: (M\times R)\times (M\times R) &\rightarrow M\times R\\
((u,x),(v,y))&\mapsto (u,x)\+ (v,y):=(u+v,x+y-b(u,v))
\end{align*}
and the scalar mutliplication by
\begin{align*}
\circ:(M\times R)\times R&\rightarrow M\times R\\
((u,x),a)&\mapsto (u,x)\circ a:=(ua,\bar axa).
\end{align*}
We call this $R^{\bullet}$-module the {\it big Heisenberg group} and denote it by $\h^*$. Set $\Delta^*:=\{(u,x)\in \h^*\mid q(u)\+(0,x)\in\Delta\}$, $J(\Delta^*):=\{u\in M\mid \exists x\in R:(u,x)\in \Delta^*\}$ and $\tilde I^*:=\{v\in M\mid b(u,v)\in I~\forall u\in J(\Delta^*)\}$. Further set \[(\Omega^I_{min})^*:=\{(0,x-\bar x\lambda)\mid  x\in I\}\+ \Delta^*\circ I\] and \[(\Omega^I_{max})^*:=\Delta^*\cap (\tilde I^*\times  I).\]
We call an $R^{\bullet}$-submodule $\Omega^*$ of $\h^*$ lying between $(\Omega^I_{min})^*$ and $(\Omega^I_{max})^*$ a {\it big relative odd form parameter for} $I$. Denote the set of all big relative odd form parameters for $I$ by $\FP^*(I)$. There is a one-to-one correspondence between $\FP(I)$ and $\FP^*(I)$ (compare \cite[p. 4758]{petrov}). Namely the maps
\begin{align*}
\hspace{2.8cm}f:\FP(I)&\rightarrow \FP^*(I)\\
\Omega&\mapsto f(\Omega):=\{(u,x)\mid u\in M(I), x\in R, q(u)\+(0,x)\in \Omega\}.
\end{align*}
and
\begin{align*}
f^{-1}:\FP^*(I)&\rightarrow \FP(I)\\
\Omega^*&\mapsto f^{-1}(\Omega^*):=\{q(u)\+(0,x)\mid (u,x)\in \Omega^*\}.
\end{align*}
are inverse to each other. Define the map
\begin{align*}
\psi:\U_{2n+1}(R,\Delta)\times \FP^*(I)&\rightarrow \FP^*(I)\\
(\sigma,\Omega^*)&\mapsto \psi((\sigma,\Omega^*)):={}^{\sigma}\Omega^*.
\end{align*}
where $^{\sigma}\Omega^*=\{(\sigma u,x)\mid (u,x)\in \Omega^*\}$.
Then clearly $\psi$ is a (left) group action.  Consider the commutative diagram
\xymatrixcolsep{5pc}
\xymatrixrowsep{5pc}
\[\xymatrix{
\U_{2n+1}(R,\Delta)\times \FP(I) \ar[d]_{(id,f)} \ar[r]^-{\phi} &\FP(I)\\
\U_{2n+1}(R,\Delta)\times \FP^*(I)\ar[r]^-{\psi} &\FP^*(I)\ar[u]_{f^{-1}}}\]
where $\phi=f^{-1}\circ \psi\circ (id,f)$. Clearly $\phi$ is a group action since $\psi$ is a group action. Further 
\[\phi(\sigma,\Omega)=\{q(\sigma u)\+(0,x)\mid u\in M(I),x\in R, q(u)\+(0,x)\in\Omega\}.\]
It follows from Lemma \ref{38} that $\phi(\sigma,\Omega)={}^{\sigma}\Omega$.
\end{proof}
\subsection{The action of $\U_{2n+1}(R,\Delta)$ on principal and normalized principal congruence subgroups}
In this subsection $(I,\Omega)$ denotes an odd form ideal of $(R,\Delta)$.
\begin{Theorem}\label{47}
Let $\sigma\in \U_{2n+1}(R,\Delta)$. Then ${}^{\sigma}\U_{2n+1}((R,\Delta),(I,\Omega))=\U_{2n+1}((R,\Delta),(I,{}^{\sigma}\Omega))$ and \[\U_{2n+1}((R,\Delta),(I,{}^{\sigma}\Omega))=\U_{2n+1}((R,\Delta),(I,\Omega))\Leftrightarrow {}^{\sigma}\Omega=\Omega.\]
\end{Theorem}
\begin{proof}
Assume we have shown that \[{}^{\sigma}\U_{2n+1}((R,\Delta),(I,\Omega))\subseteq \U_{2n+1}((R,\Delta),(I,{}^{\sigma}\Omega)).\tag{34.1}\]
Since $\sigma$ and $\Omega$ were arbitrarily chosen, it follows that 
\begin{align*}
{}^{\sigma^{-1}}\U_{2n+1}((R,\Delta),(I,{}^{\sigma}\Omega))&\subseteq \U_{2n+1}((R,\Delta),(I,\Omega))\\
\Leftrightarrow \hspace{0.4cm}\U_{2n+1}((R,\Delta),(I,{}^{\sigma}\Omega))&\subseteq {}^{\sigma}\U_{2n+1}((R,\Delta),(I,\Omega)).
\end{align*}
and hence ${}^{\sigma}\U_{2n+1}((R,\Delta),(I,\Omega))=\U_{2n+1}((R,\Delta),(I,{}^{\sigma}\Omega))$. Thus it suffices to show (34.1) in order to show the first assertion of the theorem.\\
Let $\rho\in {}^{\sigma}\U_{2n+1}((R,\Delta),(I,\Omega))$. Then there is a $\tau\in \U_{2n+1}((R,\Delta),(I,\Omega))$ such that $\rho={}^{\sigma}\tau$. We have to show that $\rho\in \U_{2n+1}((R,\Delta),(I,{}^{\sigma}\Omega))$, i.e. 
\begin{enumerate}[(1)]
\item $\rho_{hb}\equiv e_{hb}\bmod I $ and
\item $q(\rho u)\equiv q(u)\bmod {}^{\sigma}\Omega~\forall u\in M(R,\Delta)$.
\end{enumerate} 
One checks easily that (1) holds. It remains to show that (2) holds. Let $u\in M(R,\Delta)$. Set $\xi:=\tau-e$. Clearly $\sigma^{-1}u\in M(R,\Delta)$ since $u\in M(R,\Delta)$. It follows that $\xi\sigma^{-1}u\in M(I,\Omega)$ since $\tau\in \U_{2n+1}((R,\Delta),(I,\Omega))$. This implies $\sigma\xi\sigma^{-1}u\in M(I)$. Hence
\begin{align*}
&q(\rho u)\\
=&q((e+\sigma\xi\sigma^{-1})u)\\
=&q(u+\sigma\xi\sigma^{-1} u)\\
\overset{L. \ref{37}}{\equiv}&q(u)\+ q(\sigma\xi\sigma^{-1}u)\+(0,b(u,\sigma\xi\sigma^{-1}u))\bmod \Omega^I_{\min}.
\end{align*}
In order to show (2) it suffices to show that $q(\sigma\xi\sigma^{-1}u)\+(0,b(u,\sigma\xi\sigma^{-1}u))\in {}^{\sigma}\Omega$. But 
\begin{align*}
&q(\sigma\xi\sigma^{-1}u)\+(0,b(u,\sigma\xi\sigma^{-1}u))\\
\equiv& (q(\sigma_{*0})\minus(1,0))\circ (\xi\sigma^{-1}u)_0\+ q(\xi\sigma^{-1}u)\+ (0,b(u,\sigma\xi\sigma^{-1}u))\bmod \Omega^I_{min}
\end{align*}
by Lemma \ref{38}. Hence it suffices to show that $q(\xi\sigma^{-1}u)\+ (0,b(u,\sigma\xi\sigma^{-1}u))\in\Omega$ in order to show that $q(\sigma\xi\sigma^{-1}u)\+(0,b(u,\sigma\xi\sigma^{-1}u))\in{}^{\sigma}\Omega$. But
\begin{align*}
&q(\xi\sigma^{-1}u)\+(0,b(u,\sigma\xi\sigma^{-1}u))\\
=&q(\xi\sigma^{-1}u)\+(0,b(\sigma^{-1}u,\xi\sigma^{-1}u))\\
\overset{L.\ref{37}}{\equiv}& q(\sigma^{-1}u+\xi\sigma^{-1}u)\minus q(\sigma^{-1}u)\bmod \Omega^I_{min}\\
=&q((e+\xi)\sigma^{-1}u)\minus q(\sigma^{-1}u)\\
=&q(\tau\sigma^{-1}u)\minus q(\sigma^{-1}u)\in\Omega
\end{align*}
since $\tau\in \U_{2n+1}((R,\Delta),(I,\Omega))$. Thus (2) holds.\\
The second assertion of the theorem follows from the fact that if $(I,\Omega)$ and $(I, \Omega')$ are odd form ideals then $(I,\Omega)=(I, \Omega')\Leftrightarrow \U_{2n+1}((R,\Delta),(I,\Omega))=\U_{2n+1}((R,\Delta),(I,\Omega'))$.
\end{proof}
\begin{Corollary}\label{10000}
$\NU_{2n+1}((R,\Delta),(I,\Omega))=\{\sigma\in \U_{2n+1}(R,\Delta)\mid \Omega={}^{\sigma}\Omega\}$ and for all $\sigma\in \U_{2n+1}(R,\Delta)$ we have ${}^{\sigma}\NU_{2n+1}((R,\Delta),(I,\Omega))=\NU_{2n+1}((R,\Delta),(I,{}^{\sigma}\Omega))$.
\end{Corollary}
\begin{proof}
Straightforward.
\end{proof}
\begin{Corollary}\label{10001}
$\EU_{2n+1}(R,\Delta)\subseteq \NU_{2n+1}((R,\Delta),(I,\Omega))$.
\end{Corollary}
\begin{proof}
Let $\sigma\in \EU_{2n+1}(R,\Delta)$ be an elementary matrix. Then clearly $q(\sigma_{*0})=(1,0)$ and hence $\Omega={}^{\sigma}\Omega$. The result follows now from the previous corollary.
\end{proof}
\begin{Corollary}\label{utilde}
\[\NU_{2n+1}((R,\Delta),(I,\Omega))=\{\sigma\in \U_{2n+1}(R,\Delta)\mid q(\sigma u)\equiv q(\sigma^{-1}u)\equiv q(u)\bmod \Omega~\forall u\in M(I,\Omega)\}.\]
\end{Corollary}
\begin{proof}
Let $\sigma \in \U_{2n+1}(R,\Delta)$. Then, by Corollary \ref{10000}, $\sigma\in \NU_{2n+1}((R,\Delta),(I,\Omega))$ iff $\Omega={}^{\sigma}\Omega$. One checks easily that $\Omega={}^{\sigma}\Omega$ iff  \[q(\sigma_{*0}x)\equiv q(\sigma'_{*0}x)\equiv (x,0)\bmod \Omega~\forall x\in J(\Omega).\tag{37.1}\] 
It follows from Lemma \ref{38} that (37.1) is equivalent to
\[q(\sigma u)\equiv q(\sigma^{-1}u)\equiv q(u)\bmod \Omega~\forall u\in M(I,\Omega).\tag{37.2}\]
\end{proof}

The example below shows that in general (even if $R$ is semisimple and $n\geq 3$)
\begin{enumerate}[(1)]
\item $\NU_{2n+1}((R,\Delta),(I,\Omega))\neq \U_{2n+1}(R,\Delta)$,
\item $J(\Omega)\neq J({}^{\sigma}\Omega)$ and
\item the group action $(\sigma,\Omega)\mapsto {}^{\sigma}\Omega$ is not transitive.
\end{enumerate} 
\begin{Example}\label{174}
Suppose $R=\M_2(\mathbb{F}_2)$, $\bar{x}={x}^t~\forall x\in R$, $\lambda=1$, $\mu=0$ and $\Delta:=\Delta_{max}=\{(x,y)\in R\times R\mid y=y^t\}$. Let $I=\{0\}$. Then
\[\Omega_{min}^I=\{(0,x-\bar x\lambda)\mid x\in I\}\overset{.}{+} (\Delta\circ I)=\{0\}\times\{0\}\]
and
\[\Omega_{max}^I=\Delta\cap(\tilde I\times I)=R\times\{0\}.\]
Hence the relative odd form parameters for $I$ correspond to right ideals of $R$ (any relative odd form parameter for $I$ is of the form $J\times \{0\}$ for some right ideal $J$ of $R$ and conversely if $J$ is a right ideal of $R$, then $J\times \{0\}$ is a relative odd form parameter for $I$). But there are only 5 right ideals of $R$ (they are in 1-1 correspondence to the subspaces of the $\mathbb{F}_2$-vector space $\mathbb{F}_2\times \mathbb{F}_2$), namely
\begin{align*}
J_1&=\{0\},\\
J_2&=\{\begin{pmatrix}a&b\\0&0\end{pmatrix}\mid a,b\in\mathbb{F}_2\},\\
J_3&=\{\begin{pmatrix}0&0\\a&b\end{pmatrix}\mid a,b\in\mathbb{F}_2\},\\
J_4&=\{\begin{pmatrix}a&b\\a&b\end{pmatrix}\mid a,b\in\mathbb{F}_2\}\text{ and}\\
J_5&=R.
\end{align*}
Set $\Omega_i:=J_i\times\{0\}~(1\leq i\leq 5)$. Then \[\FP(I)=\{\Omega_1=\Omega^I_{min}
,\Omega_2,\Omega_3,\Omega_4,\Omega_5=\Omega_{max}^I\}.\]
It is easy to show that 
\begin{align*}
orbit(\Omega_1)&=\{\Omega_1\},\\
orbit(\Omega_2)&=orbit(\Omega_3)=orbit(\Omega_4)=\{\Omega_2,\Omega_3,\Omega_4\}\text{ and}\\
orbit(\Omega_5)&=\{\Omega_5\}.
\end{align*}
where for any $i\in \{1,\dots,5\}$, $orbit(\Omega_i)$ denotes the orbit of $\Omega_i$ with respect to the group action
\begin{align*}
\U_{2n+1}(R,\Delta)\times \FP(I)&\rightarrow \FP(I)\\
(\sigma,\Omega)&\mapsto {}^{\sigma}\Omega.
\end{align*}
For example, set
\[\sigma:=\left(\begin{array}{ccc}e^{n\times n}&0&0\\0&\scalebox{0.8}{$\left(\begin{array}{cc}0&1\\1&0\end{array}\right)$}&0\\0&0&e^{n\times n}\end{array}\right)\in \M_{2n+1}(R).\]
By Lemma \ref{20}, $\sigma\in \U_{2n+1}(R,\Delta)$. Clearly \begin{align*}
&^{\sigma}\Omega_2\\
=&\{(\sigma_{00}x,q_2(\sigma_{*0}x)+y)\mid (x,y)\in\Omega_2\}\\
=&\{(\sigma_{00}x,0)\mid x\in J_2\}\\
=&J_3\times\{0\}\\
=&\Omega_3.
\end{align*}
\end{Example}
\subsection{The effect of conjugation on level}
In this subsection we assume that $R$ is semilocal or quasifinite and $n\geq 3$. A subgroup $H$ of $\U_{2n+1}(R,\Delta)$ is called {\it E-normal} iff it is normalized by the elementary subgroup $\EU_{2n+1}(R,\Delta)$. Suppose $H$ is an E-normal subgroup of $\U_{2n+1}(R,\Delta)$. Set
\[I:=\{x\in R\mid T_{ij}(x)\in H \text{ for some }i,j\in\Theta_{hb}\}\]
and
\[\Omega:=\{(x,y)\in \Delta\mid T_{i}(x,y)\in H \text{ for some }i\in\Theta_{-}\}.\]
Then $(I,\Omega)$ is an odd form ideal such that $\EU_{2n+1}((R,\Delta),(I,\Omega))\subseteq H$. It is called the {\it level of H} and $H$ is called an {\it E-normal subgroup of level $(I,\Omega)$}. 

The next result of V. A. Petrov \cite{petrov} is a generalization of an analogous result \cite[Theorem 1.1 and Lemma 5.2]{bak-vavilov} for $\U_{2n}(R,\Lambda(\Delta))$.
\begin{Theorem}[V. A. Petrov]\label{51}
$\EU_{2n+1}((R,\Delta),(I,\Omega))$ is a normal subgroup of $\NU_{2n+1}((R,\Delta),(I,\Omega))$ and the standard commutator formulas
\begin{align*}
&[\CU_{2n+1}((R,\Delta),(I,\Omega)),\EU_{2n+1}(R,\Delta)]\\
=&[\EU_{2n+1}((R,\Delta),(I,\Omega)),\EU_{2n+1}(R,\Delta)]\\
=&\EU_{2n+1}((R,\Delta),(I,\Omega))
\end{align*}
hold. In particular from the absolute case $(I,\Omega)=(R,\Delta)$, it follows that $\EU_{2n+1}(R,\Delta)$ is perfect and normal in $\U_{2n+1}(R,\Delta)$. 
\end{Theorem}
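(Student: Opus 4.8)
The plan is to reduce the theorem to two normality statements and then to prove those by a local--global principle. The two statements are: the \emph{absolute} one, that $EU_{2n+1}(R,\Delta)$ is normal in $U_{2n+1}(R,\Delta)$; and the \emph{relative} one, that ${}^{\sigma}\tau\in EU_{2n+1}((R,\Delta),(I,\Omega))$ for every $\sigma\in\tilde U_{2n+1}((R,\Delta),(I,\Omega))$ and every $(I,\Omega)$-elementary generator $\tau$, and moreover that $[\sigma,\epsilon]\in EU_{2n+1}((R,\Delta),(I,\Omega))$ for every $\sigma\in CU_{2n+1}((R,\Delta),(I,\Omega))$ and every elementary generator $\epsilon$. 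Granting these, the rest is commutator bookkeeping with Lemma~\ref{23}: using $n\geq3$ one checks that every elementary generator is a product of commutators of elementary generators, so $EU_{2n+1}(R,\Delta)$ is perfect, and that every $(I,\Omega)$-elementary generator lies in $[EU_{2n+1}(I,\Omega),EU_{2n+1}(R,\Delta)]$; since $EU_{2n+1}((R,\Delta),(I,\Omega))$ is the normal closure of $EU_{2n+1}(I,\Omega)$ in $EU_{2n+1}(R,\Delta)$, the latter yields $EU_{2n+1}((R,\Delta),(I,\Omega))=[EU_{2n+1}((R,\Delta),(I,\Omega)),EU_{2n+1}(R,\Delta)]$, and because $EU_{2n+1}((R,\Delta),(I,\Omega))\subseteq CU_{2n+1}((R,\Delta),(I,\Omega))$ (Remarks~\ref{48} and~\ref{50}) one gets $EU_{2n+1}((R,\Delta),(I,\Omega))\subseteq[CU_{2n+1}((R,\Delta),(I,\Omega)),EU_{2n+1}(R,\Delta)]\subseteq EU_{2n+1}((R,\Delta),(I,\Omega))$, the last inclusion by the $CU$-part of the relative statement; so the three subgroups displayed in the theorem coincide. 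Normality of $EU_{2n+1}((R,\Delta),(I,\Omega))$ in $\tilde U_{2n+1}((R,\Delta),(I,\Omega))$ then follows, since its generators are the ${}^{\epsilon}\tau$ with $\epsilon\in EU_{2n+1}(R,\Delta)$ and $\tau$ an $(I,\Omega)$-preelementary generator, and ${}^{g}({}^{\epsilon}\tau)={}^{g\epsilon}\tau$ with $g\epsilon\in\tilde U_{2n+1}((R,\Delta),(I,\Omega))$ (Remark~\ref{42}) lies in $EU_{2n+1}((R,\Delta),(I,\Omega))$ by the relative statement; and the ``in particular'' clause is the special case $(I,\Omega)=(R,\Delta)$, where $\tilde U_{2n+1}((R,\Delta),(R,\Delta))=U_{2n+1}(R,\Delta)$ and $EU_{2n+1}((R,\Delta),(R,\Delta))=EU_{2n+1}(R,\Delta)$, combined with perfectness.

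So the work lies in the two normality statements, and for both I would use a local--global principle. I would first treat the \textbf{semilocal case}. A semilocal ring satisfies a stable range condition ensuring that $EU_{2n+1}(R,\Delta)$ acts transitively on the relevant sets of isotropic unimodular columns, so Gauss elimination with respect to the base $(e_1,\dots,e_n,e_0,e_{-n},\dots,e_{-1})$ --- clearing the first column, then the first row, passing to a smaller hyperbolic corner, and iterating, with bookkeeping via Lemmas~\ref{23} and~\ref{25} --- writes any $\sigma\in U_{2n+1}(R,\Delta)$ as $\eta d$ with $\eta\in EU_{2n+1}(R,\Delta)$ and $d$ a factor of monomial-diagonal type which normalises $EU_{2n+1}(R,\Delta)$; this gives the absolute statement. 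For $\sigma\in\tilde U_{2n+1}((R,\Delta),(I,\Omega))$ one then has ${}^{\sigma}\tau={}^{\eta}({}^{d}\tau)$, and since ${}^{d}\tau$ is again an $(I,\Omega)$-preelementary generator this lies in $EU_{2n+1}((R,\Delta),(I,\Omega))$ by definition of the latter as a normal closure; a similar argument, together with the definition of $CU_{2n+1}((R,\Delta),(I,\Omega))$ and the fact that over a semilocal ring an elementary matrix lying in the principal congruence subgroup of level $(I,\Omega)$ is $(I,\Omega)$-elementary, gives the relative statement over semilocal rings.

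For the \textbf{quasifinite case} one patches, in the style of Bak's localisation method. A direct limit of almost commutative rings is also a direct limit of rings that are module-finite over a Noetherian commutative central subring, and since conjugating a fixed generator by a fixed matrix and membership in a relative elementary subgroup involve only finitely many ring elements, one reduces to the case where $R$ is module-finite over a Noetherian commutative central subring $C$. One localises at each maximal ideal $\m$ of $C$: then $R_{\m}$ is semilocal, so the previous step applies over $R_{\m}$. To descend, introduce a polynomial variable $t$, form the ``generic'' conjugate over $R[t]$, and compare its values at $t=0$ and $t=1$: a local--global principle in the spirit of Suslin (available because $C$ is Noetherian) together with a dilation lemma --- an elementary conjugation over a principal localisation $R_s$ whose parameter is divisible by a sufficiently high power of $s$ descends to $R$ --- shows that the defect ${}^{\sigma}\tau\cdot\tau^{-1}$ (respectively $[\sigma,\epsilon]$), which is $(I,\Omega)$-elementary over every $R_{\m}$ by the semilocal case, is already $(I,\Omega)$-elementary over $R$.

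The \textbf{main obstacle} will be this descent for quasifinite rings: one must carry out the conjugation calculus explicitly enough over $R[t]$ and over the localisations $R_s$ to recognise that the parameter dependence is ``elementary of the correct level'', and verify the dilation lemma in the presence of the odd form parameter $\Delta$ and the twisted involution with symmetry $(\,\bar{}\,,\lambda,\mu)$; this is the technically heaviest step, and the only place where the finiteness hypotheses on $R$ are genuinely used. The $CU$-part of the relative statement is obtained by the same machinery applied to the reduction of ${}^{\sigma}\epsilon$ --- equivalently, from the identity $EU_{2n+1}(R,\Delta)\cap U_{2n+1}((R,\Delta),(I,\Omega))=EU_{2n+1}((R,\Delta),(I,\Omega))$, itself a consequence of the local--global principle. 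By contrast the purely group-theoretic parts (perfectness, the level computation, assembling the commutator formulas) are routine given Lemmas~\ref{23} and~\ref{25}.
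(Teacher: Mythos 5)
Your plan is coherent and, at the level of the commutator bookkeeping, matches what the paper does; but the route to the two ``normality statements'' is genuinely different from the paper's, because the paper does not prove them at all --- it cites Petrov (\cite[Theorems 1, 4 and Proposition 4]{petrov} in the semilocal case, the corollary on p.~4765 in the quasifinite case) for normality of $EU_{2n+1}((R,\Delta),(I,\Omega))$ in $\tilde U_{2n+1}((R,\Delta),(I,\Omega))$, for $[U_{2n+1}((R,\Delta),(I,\Omega)),EU_{2n+1}(R,\Delta)]\subseteq EU_{2n+1}((R,\Delta),(I,\Omega))$, and for $[EU_{2n+1}((R,\Delta),(I,\Omega)),EU_{2n+1}(R,\Delta)]=EU_{2n+1}((R,\Delta),(I,\Omega))$, and then upgrades the middle inclusion from $U$ to $CU$ purely formally: writing $[CU,E]=[[E,E],CU]$ (perfectness of $E$) and applying the three subgroups lemma together with $[E,CU]\subseteq U(I,\Omega)$ from the definition of $CU$. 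That three-subgroups reduction is worth adopting in your plan: it lets you prove the hard local--global statement only for $\sigma$ in the principal congruence subgroup $U_{2n+1}((R,\Delta),(I,\Omega))$, where $\sigma\equiv e$ modulo the ideal and the conjugation calculus is tractable, instead of attacking $\sigma\in CU$ directly (your alternative reduction via $E\cap U(I,\Omega)=EU(I,\Omega)$ is logically fine once absolute normality is known, but that identity is itself of the same difficulty). What your approach buys is a self-contained proof; what it costs is that the semilocal Gauss elimination, the dilation lemma and the Suslin-type local--global principle for $EU_{2n+1}$ over $R[t]$ --- which you correctly identify as the heavy steps --- are exactly the content of Petrov's paper and are only sketched here, so as written the proposal is a credible programme rather than a proof.
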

\begin{proof}
By Corollary \ref{utilde}, $\NU_{2n+1}((R,\Delta),(I,\Omega))=\{\sigma\in \U_{2n+1}(R,\Delta)\mid q(\sigma u)\equiv q(\sigma^{-1}u)\equiv q(u)\bmod \Omega~\forall u\in M(I,\Omega)\}$. This is the group Petrov denotes by $\tilde U$ in \cite{petrov}. It follows from \cite[Theorems 1 and 4]{petrov} (semilocal case) resp. \cite[corollary on page 4765]{petrov} (quasifinite case), that $\EU_{2n+1}((R,\Delta),(I,\Omega))$ is a normal subgroup of $\NU_{2n+1}((R,\Delta),(I,\Omega))$ and \begin{align*}
&[\U_{2n+1}((R,\Delta),(I,\Omega)),\EU_{2n+1}(R,\Delta)]\\
\subseteq &\EU_{2n+1}((R,\Delta),(I,\Omega)). \tag{39.1}
\end{align*}
(Note that in \cite{petrov} the full congruence subgroup is defined a little differently). By \cite[Proposition 4]{petrov}, 
\begin{align*}
&[\EU_{2n+1}((R,\Delta),(I,\Omega)),\EU_{2n+1}(R,\Delta)]\\
=&\EU_{2n+1}((R,\Delta),(I,\Omega)).\tag{39.2}
\end{align*}
Hence
\begin{align*}
&[\CU_{2n+1}((R,\Delta),(I,\Omega)),\EU_{2n+1}(R,\Delta)]\\
=&[\EU_{2n+1}(R,\Delta),\CU_{2n+1}((R,\Delta),(I,\Omega))]\\
=&[[\EU_{2n+1}(R,\Delta),\EU_{2n+1}(R,\Delta)],\CU_{2n+1}((R,\Delta),(I,\Omega))]\\
\subseteq &\EU_{2n+1}((R,\Delta),(I,\Omega))\tag{39.3}
\end{align*}
by the definition of $\CU_{2n+1}((R,\Delta),(I,\Omega))$, (39.1) and the three subgroups lemma. (39.2) and (39.3) imply the assertion of the theorem. 
\end{proof}
\begin{Corollary}\label{165}
If $\sigma\in \U_{2n+1}(R,\Delta)$, then 
\[^{\sigma}\EU_{2n+1}((R,\Delta),(I,\Omega))=\EU_{2n+1}((R,\Delta),(I,{}^{\sigma}\Omega))\]
and
\[^{\sigma}\CU_{2n+1}((R,\Delta),(I,\Omega))=\CU_{2n+1}((R,\Delta),(I,{}^{\sigma}\Omega)).\]
Further $\NU_{2n+1}((R,\Delta),(I,\Omega))$ is the normalizer of $\EU_{2n+1}((R,\Delta),(I,\Omega))$ and of $\CU_{2n+1}((R,\Delta),(I,\Omega))$.
\end{Corollary}
\begin{proof}
Follows from Theorem \ref{47} and the standard commutator formulas in Theorem \ref{51}. 
\end{proof}
\begin{Corollary}
Let $H$ be an E-normal subgroup of level $(I,\Omega)$. Then for any $\sigma\in \U_{2n+1}(R,\Delta)$, $^{\sigma}H$ is an E-normal subgroup of level $(I,{}^{\sigma}\Omega)$. It follows that $H\subseteq \NU_{2n+1}((R,\Delta),(I,\Omega))$.
\end{Corollary}
\begin{proof}
Follows from the previous corollary and Corollary \ref{10000}.
\end{proof}
\section{The Extraction Theorem}\label{sec6}
In this section $(R,\Delta)$ denotes a Hermitian form ring and $n\geq 3$ a natural number. The main result of this section is Theorem \ref{extraction}, which is found at the very end of the section. It will be used in the next section to classify the E-normal subgroups of an odd dimensional unitary group.

Theorem \ref{extraction} says the following: Suppose $R$ is semilocal. Let $H$ be an E-normal subgroup of $\U_{2n+1}(R,\Delta)$ such that $\EU_{2n+1}(I,\Omega)\subseteq H$. If $H\not\subseteq \CU_{2n+1}((R,\Delta),(I,\Omega))$, then $H$ contains an elementary matrix which is not $(I,\Omega)$-elementary.

Although Theorem \ref{extraction} is for semilocal rings, many of the results needed to prove the theorem will also be required in the next section, where it is not assumed that $R$ is semilocal. So for most of the current section, no assumptions will be put on $R$. 

The results that are needed are always of the same kind. We start with a Hermitian form ring $(R,\Delta)$, some odd form ideal $(I,\Omega)$ of $(R,\Delta)$, and an element $h$ of $\U_{2n+1}(R,\Delta)$
which is not in $\CU_{2n+1}((R,\Delta),(I,\Omega))$. Let $^{E}\langle h\rangle$ denote the $\EU_{2n+1}(R,\Delta)$-normal closure of $\langle h\rangle$ in $\U_{2n+1}(R,\Delta)$. The result needed is that $^{E}\langle h\rangle \EU_{2n+1}((R,\Delta),(I,\Omega))$ contains an elementary matrix, say $g$, such that $g$ is not $(I,\Omega)$-elementary. This is always proved by showing that $h$ or some element of $h\EU_{2n+1}((R,\Delta),(I,\Omega))$, denoted again by $h$, satisfies a {\it generator extraction equation (GEE)}
\[ ^{\epsilon_{k}}([^{\epsilon_{k-1}}(\dots^{\epsilon_2}([^{\epsilon_1}([^{\epsilon_0}h,g_0]^{l_1}),g_1]^{l_2})\dots),g_{k-1}]^{l_{k}})\xi=g_k\] 
where $k\in\mathbb{N}$, $\xi\in \EU_{2n+1}((R,\Delta),(I,\Omega))$, $g_0,\dots,g_k,\epsilon_0,\dots,\epsilon_k\in \EU_{2n+1}(R,\Delta)$, $l_1,\dots,l_k\in \{\pm 1\}$ and $g_k=g$. However, it is typically the case that this is not shown in just one go. It is first shown that $h$ satisfies an {\it extraction equation (EE)} 
\[ ^{\epsilon_{k}}([^{\epsilon_{k-1}}(\dots^{\epsilon_2}([^{\epsilon_1}([^{\epsilon_0}h,g_0]^{l_1}),g_1]^{l_2})\dots),g_{k-1}]^{l_{k}})\xi=h_1\] 
where $k\in\mathbb{N}$, $\xi\in \EU_{2n+1}((R,\Delta),(I,\Omega))$, $g_0,\dots,g_{k-1},\epsilon_0,\dots,\epsilon_k\in \EU_{2n+1}(R,\Delta)$, $l_1,\dots,l_k\in \{\pm 1\}$ and $h_1 \not\in  \CU_{2n+1}((R,\Delta),(I,\Omega))$. The element $h_1$ is broke into several cases, covering all possibilities, and in each case it is shown that $h_1$ satisfies an EE with target, say $h_2$. In those cases where $h_2$ is an elementary matrix which is not $(I,\Omega)$-elementary, we are finished. If this happens in all cases, we have reached our goal. In the cases where $h_2$ is not an elementary matrix which is not $(I,\Omega)$-elementary, we repeat the modus operandi above. Doing this as often as necessary, we finally show that $^{E}\langle h\rangle EU_{2n+1}((R,\Delta),(I,\Omega))$ contains an elementary matrix $g$, such that $g$ is not $(I,\Omega)$-elementary.

\subsection{Preliminary results I}
In this subsection $I$ denotes an involution invariant ideal of $R$.
\begin{Lemma}\label{n1}
Let $H$ be an E-normal subgroup of $\U_{2n+1}(R,\Delta)$ which contains a matrix $\sigma$ of the form
\[\sigma=\begin{pmatrix} 1 &0&0 \\ *&e^{(2n-1)\times (2n-1)}& 0\\ *& *&1\end{pmatrix}\]
such that $\sigma_{-1,j}\not\in I$ for some $j\in\{2,\dots,-2\}\setminus\{0\}$ or $\sigma_{-1,0}\not\in I_0$. Then $H$ contains an elementary matrix which is not $(I,\Omega^I_{max})$-elementary. 
\end{Lemma}
\begin{proof}
~\\
\underline{case 1} Assume that $\sigma_{-1,j}\not\in I$ for some $j\in\{2,\dots,-2\}\setminus\{0\}$.\\
Choose a $k\neq 0,\pm 1, \pm j$. Then 
\begin{align*}
&T_{-1,-j}(\sigma_{-1,j})=[[\sigma,T_{jk}(1)],T_{k,-j}(1)]\in H.
\end{align*}
\\
\underline{case 2} Assume that $\sigma_{-1,0}\not\in I_0$.\\
By case 1 we can additionally assume that $\sigma_{-1,-2}\in I$. By the definition of $I_0$ there is a $c\in J(\Delta)$ such that $\sigma_{-1,0}c\not\in I$. Choose a $d\in R$ such that $(c,d)\in\Delta$. Then $\tau:=[\sigma,T_{j}(c,d)]$ has the form 
\[\tau=\begin{pmatrix} 1 &0&0 \\ *&e^{(2n-1)\times (2n-1)}& 0\\ *&*&1\end{pmatrix}\]
and $\tau_{-1,2}=\sigma_{-1,0}c-\sigma_{-1,-2}\bar d\lambda\not\in I$. We can finish by applying case 1 to $\tau$.
\end{proof}
\begin{Lemma}\label{n2}
Let $H$ be an E-normal subgroup of $\U_{2n+1}(R,\Delta)$ which contains a matrix $\sigma$ of the form
\[\sigma=\begin{pmatrix}1&0&0\\ *&E&0\\ *& *&1\end{pmatrix}\]
where $E\in \U_{2n-1}(R,\Delta)\setminus \U_{2n-1}((R,\Delta),(I,\Omega^I_{max}))$. Then $H$ contains an elementary matrix which is not $(I,\Omega^I_{max})$-elementary. 
\end{Lemma}
\begin{proof}
By Remark \ref{40}, there are $i,j\in\{2,\dots,-2\}\setminus\{0\}$ such that $\sigma_{ij}\not\equiv \delta_{ij}\bmod I $ or there is a $j\in\{2,\dots,-2\}\setminus\{0\}$ such that $\sigma_{0j}\not\in \tilde I$ or $\sigma_{00}\not\equiv 1\bmod \tilde I_0$.\\
\\
\underline{case 1} Assume there are $i,j\in\{2,\dots,-2\}\setminus\{0\}$ such that $\sigma_{ij}\not\equiv \delta_{ij}\bmod I $.\\
One checks easily that $\tau:=[\sigma^{-1},T_{-1,i}(1)]$ has the form 
\[\tau=\begin{pmatrix} 1 &0&0 \\ *&e^{(2n-1)\times (2n-1)}& 0\\ *& w&1\end{pmatrix}\]
where $w=(\sigma_{i2},\dots,\sigma_{i,-2})-e_i^t$. Hence $\tau_{-1,j}=\sigma_{ij}-\delta_{ij}\not\in I$. It follows from the previous lemma that $H$ contains an elementary matrix which is not $(I,\Omega^I_{max})$-elementary.\\
\\
\underline{case 2} Assume there is a $j\in\{2,\dots,-2\}\setminus\{0\}$ such that $\sigma_{0j}\not\in \tilde I$.\\
By the definition of $\tilde I$ there is an $a\in J(\Delta)$ such that $\bar a\mu \sigma_{0j}\not\in I$. Choose a $b\in R$ such that $(a,b)\in \Delta$. Then $\tau:=[\sigma^{-1},T_{-1}(\minus(a,b))]$ has the form 
\[\tau=\begin{pmatrix} 1 &0&0 \\ *&e^{(2n-1)\times (2n-1)}& 0\\ *& w&1\end{pmatrix}\]
where $w=\bar a\mu ((\sigma_{02},\dots,\sigma_{0,-2})-e_0^t)$. Hence $\tau_{-1,j}=\bar a\mu \sigma_{0j}\not\in I$. It follows from the previous lemma that $H$ contains an elementary matrix which is not $(I,\Omega^I_{max})$-elementary.\\
\\
\underline{case 3} Assume that $\sigma_{00}\not\equiv 1\bmod \tilde I_0$.\\
By case 2 we can additionally assume that $\sigma_{0,-2}\in \tilde I$. By the definition of $\tilde I_0$ there is an $a\in J(\Delta)$ such that $\bar a\mu (\sigma_{00}-1)\not\in I_0$. Choose a $b\in R$ such that $(a,b)\in \Delta$. Then $\tau:=[\sigma^{-1},T_{-1}(\minus(a,b))]$ has the form 
\[\tau=\begin{pmatrix} 1 &0&0 \\ *&e^{(2n-1)\times (2n-1)}& 0\\ *& w&1\end{pmatrix}\]
where $w=\bar a\mu ((\sigma_{02},\dots,\sigma_{0,-2})-e_0^t)$. Clearly $\tau_{-1,-2}=\bar a \mu\sigma_{0,-2}\in I$ and $\tau_{-1,0}=\bar a \mu(\sigma_{00}-1)\not\in I_0$. By the definition of $I_0$ there is a $c\in J(\Delta)$ such that $\tau_{-1,0}c\not\in I$. Choose a $d\in R$ such that $(c,d)\in\Delta$. Then $\omega:=[\tau,T_{-2}(c,d)]$ has the form 
\[\omega=\begin{pmatrix} 1 &0&0 \\ *&e^{(2n-1)\times (2n-1)}& 0\\ *& *&1\end{pmatrix}\]
and $\omega_{-1,2}=\tau_{-1,0}c-\tau_{-1,-2}\bar d\lambda\not\in I$. It follows from the previous lemma that $H$ contains an elementary matrix which is not $(I,\Omega^I_{max})$-elementary.
\end{proof}
\begin{Lemma}\label{n3}
Let $H$ be an E-normal subgroup of $\U_{2n+1}(R,\Delta)$ such that $\EU_{2n+1}(I,\Omega^I_{min})\subseteq H$. If $H$ contains a matrix $\sigma\not\in \U_{2n+1}((R,\Delta),(I,\Omega^I_{max}))$ of the form
\[\sigma=\begin{pmatrix}1&0&0\\ *&E&0\\ *& *&1\end{pmatrix}\]
where $E\in \U_{2n-1}((R,\Delta),(I,\Omega^I_{max}))$ and $\sigma_{i1}\in I$ for any $i\in\{2,\dots,n\}$, then $H$ contains an elementary matrix which is not $(I,\Omega^I_{max})$-elementary. 
\end{Lemma}
\begin{proof}
Throughout this proof, we use for matrices in $\U_{2n+1}(R,\Delta)$ the partitioning
\[\left(\begin{array}{cc|c|cc} x_1 &u_1&x_2&u_2&x_3\\v_1&A&v_2&B&v_3\\\hline x_4&u_3&x_5&u_4&x_6\\\hline v_4&C&v_5&D&v_6\\x_7 &u_5&x_8&u_6&x_9\end{array}\right)\]
where $x_1,\dots,x_9\in R$, $u_1,\dots,u_6\in \M_{1\times (n-1)}(R)$, $v_1,\dots,v_6\in \M_{(n-1)\times 1}(R)$ and $A,B,C,D\in \M_{n-1}(R)$.\\ Set $\xi:=\prod\limits_{i=2}^{n} T_{i1}(-\sigma_{i1})\in \EU_{2n+1}(I,\Omega^I_{min})\subseteq H$ and $\tau:=\xi\sigma$. Then $\tau$ has the form
\[\tau=\left(\begin{array}{cc|c|cc} 1 &0&0&0&0\\0&A&v_1&B&0\\\hline *&u_1&x&u_2&0\\\hline *&C&v_2&D&0\\ *&*&*&u_3&1\end{array}\right)\]
where $A-e,B, C-e,D\equiv 0\bmod I $, $u_1,u_2\equiv 0\bmod \tilde I$, $v_1,v_2\equiv 0\bmod I_0$ and $x-1\in \tilde I_0$. Further $\tau_{-1,j}\equiv b(\tau_{*1},\tau_{*j})=b(e_1,e_j)=0 \bmod I ~\forall j\in\{-n,\dots,-2\}$ and hence $u_3\equiv 0\bmod I $. Clearly $\tau\not\in \U_{2n+1}((R,\Delta),(I,\Omega^I_{max}))$ since $\sigma\not\in \U_{2n+1}((R,\Delta),(I,\Omega^I_{max}))$ and $\xi\in \EU_{2n+1}(I,\Omega^I_{min})\subseteq \U_{2n+1}((R,\Delta),(I,$ $\Omega^I_{max}))$. Suppose that $\tau_{-1,j}\in I$ for any $j\in\{1,\dots,n\}$ and $\tau_{-1,0}\in I_0$. Then, by Lemma \ref{20}, $\tau'_{i1}\in I~\forall i\in\{-n,\dots,-2\}$. It follows that $\tau_{i1}\in I~\forall i\in\{-n,\dots,-2\}$ since $\tau_{i*}\tau'_{*1}=0~\forall i\in\{-n,\dots,-2\}$. Further $\tau_{01}\in\tilde I$ (consider $b(\tau_{*1},\tau_{*0})$). Hence, by Remark \ref{40}, we have $\tau\in \U_{2n+1}((R,\Delta),(I,\Omega^I_{max}))$ and therefore a contradiction. Thus either there is a $j\in\{1,\dots,n\}$ such that $\tau_{-1,j}\not\in I$ or $\tau_{-1,0}\not\in I_0$. \\
\\
\underline{case 1} Assume that there is a $j\in\{3,\dots,n\}$ such that $\tau_{-1,j}\not\in I$.\\ Set $\omega:={}^{T_{12}(-1)}\tau$. One checks easily that 
\[\omega=\left(\begin{array}{cc|c|cc} 1 &\epsilon&*&*&*\\0&A&*&*&*\\\hline *&*&*&*&*\\\hline * &*&*&*&*\\ * &*&*&*&*\end{array}\right)\]
where $\epsilon \equiv 0\bmod I $ (and $A$ is the same matrix as above, hence $A\equiv e\bmod I $). Further $\omega_{-2,j}\equiv \tau_{-1,j}\bmod I $ and hence $\omega_{-2,j}\not\in I$. Set $\zeta:=\prod\limits_{k=2}^{n} T_{1k}(-\omega_{1k})\in \EU_{2n+1}(I,\Omega^I_{min})\subseteq H$ and $\rho:=\omega\zeta$. Then 
\[\rho=\left(\begin{array}{cc|c|cc} 1 &0&*&*&*\\0&A&*&*&*\\\hline *&*&*&*&*\\\hline * &*&*&*&*\\ * &*&*&*&*\end{array}\right).\]
Further $\rho_{-2,j}\equiv \omega_{-2,j}\bmod I $ and hence $\rho_{-2,j}\not\in I$. Set $\psi:=[\rho,T_{2,-j}(1)]$. Then
\begin{align*}
\psi=&[\rho,T_{2,-j}(1)]\\
=&(e+\rho_{*2}\rho'_{-j,*}-\rho_{*j}\bar\lambda\rho'_{-2,*})T_{2,-j}(-1)\\
=&\left(e+\begin{pmatrix}0\\ \rho_{22} \\ \vdots \\\rho_{n2} \\\rho_{02}\\\rho_{-n,2}\\\vdots \\\rho_{-1,2}\end{pmatrix}
\begin{pmatrix}\bar \rho_{-1,j}\lambda&\dots&\bar\rho_{-n,j}\lambda &\bar\rho_{0j}\mu & \bar \rho_{nj} & \dots& \bar \rho_{2j}& 0\end{pmatrix}\right.\\
&\left.-\begin{pmatrix} 0 \\ \rho_{2j} \\ \vdots \\ \rho_{nj} \\ \rho_{0j}\\ \rho_{-n,j} \\ \vdots \\\rho_{-1,j}\end{pmatrix}\bar\lambda
\begin{pmatrix}\bar\rho_{-1,2}\lambda&\dots&\bar\rho_{-n,2}\lambda&\bar\rho_{02}\mu  & \bar \rho_{n2} & \dots& \bar \rho_{22}& 0\end{pmatrix}\right)T_{2,-j}(-1).
\end{align*}
Clearly $\psi_{1*}=e^t_1$, $\psi_{*,-1}=e_{-1}$ and 
$\psi_{22}=1+\rho_{22}\bar\rho_{-2,j}\lambda-\rho_{2j}\bar\lambda \bar \rho_{-2,2}\lambda$. Since $A\equiv e\bmod I $, we have $\rho_{22}\equiv 1\bmod I $ and $\rho_{2j}\in I$. It follows that $\psi_{22}\not\equiv 1\bmod I $ since $\rho_{-2,j}\not\in I$. Thus one can apply the previous lemma to $\psi$.\\ 
\\
\underline{case 2} Assume that $\tau_{-1,2}\not\in I$.\\ 
Set $\omega:={}^{T_{13}(-1)}\sigma$. One checks easily that 
\[\omega=\left(\begin{array}{cc|c|cc} 1 &\epsilon&*&*&*\\0&A&*&*&*\\\hline *&*&*&*&*\\\hline * &*&*&*&*\\ * &*&*&*&*\end{array}\right)\]
where $\epsilon \equiv 0\bmod I $. Further $\omega_{-3,2}\equiv \tau_{-1,2}\bmod I $ and hence $\omega_{-3,2}\not \in I$. Set $\zeta:=\prod\limits_{k=2}^{n} T_{1k}(-\omega_{1k})\in \EU_{2n+1}(I,\Omega^I_{min})\subseteq H$ and $\rho:=\omega\zeta$. Then 
\[\rho=\left(\begin{array}{cc|c|cc} 1 &0&*&*&*\\0&A&*&*&*\\\hline *&*&*&*&*\\\hline * &*&*&*&*\\ * &*&*&*&*\end{array}\right).\]
Further $\rho_{-3,2}\equiv \omega_{-3,2}\bmod I $ and hence $\rho_{-3,2}\not\in I$. Set $\psi:=[\rho,T_{3,-2}(1)]$. Then
\begin{align*}
\psi
=&[\rho,g_2]\\
=&(e+\rho_{*3}\rho'_{-2,*}-\rho_{*2}\bar\lambda\rho'_{-3,*})T_{3,-2}(-1)\\
=&\left(e+\begin{pmatrix} 0 \\ \rho_{23} \\  \vdots \\ \rho_{n3} \\\rho_{03}\\ \rho_{-n,3} \\ \vdots \\ \rho_{-1,3}\end{pmatrix}
\begin{pmatrix}\bar \rho_{-1,2}\lambda&\dots&\bar\rho_{-n,2}\lambda &\bar\rho_{02}\mu & \bar \rho_{n2} & \dots& \bar \rho_{22}& 0\end{pmatrix}\right.\\
&\left.-\begin{pmatrix}0 \\ \rho_{22} \\  \vdots \\ \rho_{n2} \\ \rho_{02}\\ \rho_{-n,2} \\ \vdots \\ \rho_{-1,2}\end{pmatrix}\bar\lambda
\begin{pmatrix}\bar\rho_{-1,3}\lambda&\dots&\bar\rho_{-n,3}\lambda&\bar\rho_{03}\mu  & \bar \rho_{n3} & \dots& \bar \rho_{23}& 0\end{pmatrix}\right)T_{3,-2}(-1).
\end{align*}
Clearly $\psi_{1*}=e^t_1$, $\psi_{*,-1}=e_{-1}$ and 
$\psi_{33}=1+\rho_{33}\bar\rho_{-3,2}\lambda-\rho_{32}\bar\lambda \bar \rho_{-3,3}\lambda$. Since $A\equiv e\bmod I$, we have $\rho_{33}\equiv 1\bmod I $ and $\rho_{32}\in I$. It follows that $\psi_{33}\not\equiv 1\bmod I $ since $\rho_{-3,2}\not\in I$. Thus one can apply the previous lemma to $\psi$.\\ 
\\
\underline{case 3} Assume that $\tau_{-1,1}\not\in I$.\\
By cases 1 and 2 we can additionally assume that $\tau_{-1,j}\in I~\forall j\in\{2,\dots,n\}$. It follows that $\tau_{j1}\in I~\forall j\in\{-n,\dots,-2\}$. Set $\omega:={}^{T_{12}(-1)}\tau$. One checks easily that 
\[\omega=\left(\begin{array}{cc|c|cc} 1 &\epsilon&*&*&*\\0&A&*&*&*\\\hline *&*&*&*&*\\\hline * &*&*&*&*\\ * &*&*&*&*\end{array}\right)\]
where $\epsilon \equiv 0\bmod I $. Further $\omega_{-2,2}=\tau_{-2,2}+\tau_{-1,2}+\tau_{-2,1}+\tau_{-1,1}\equiv \tau_{-1,1}\bmod I $ and hence $\omega_{-2,2}\not\in I$. Set $\zeta:=\prod\limits_{k=2}^{n} T_{1k}(-\omega_{1k})\in \EU_{2n+1}(I,\Omega_{min}^I)\subseteq H$ and $\rho:=\omega\zeta$. Then 
\[\rho=\left(\begin{array}{cc|c|cc} 1 &0&*&*&*\\0&A&*&*&*\\\hline *&*&*&*&*\\\hline * &*&*&*&*\\ * &*&*&*&*\end{array}\right).\]
Further $\rho_{-2,2}\equiv \omega_{-2,2}\bmod I $ and hence $\rho_{-2,2}\not\in I$. Set $\psi:=[\rho,T_{2,-3}(1)]$. Then
\begin{align*}
\psi
=&[\rho,T_{2,-3}(1)]\\
=&(e+\rho_{*2}\rho'_{-3,*}-\rho_{*3}\bar\lambda\rho'_{-2,*})T_{2,-3}(-1)\\
=&\left(e+\begin{pmatrix} 0 \\\rho_{22} \\ \vdots \\ \rho_{n2} \\\rho_{02}\\ \rho_{-n,2} \\\vdots\\\rho_{-1,2}\end{pmatrix}
\begin{pmatrix}\bar \rho_{-1,3}\lambda&\dots&\bar\rho_{-n,3}\lambda &\bar\rho_{03}\mu & \bar \rho_{n3} & \dots& \bar \rho_{23}& 0\end{pmatrix}\right.\\
&\left.-\begin{pmatrix}  0 \\ \rho_{23} \\  \vdots \\ \rho_{n3} \\ \rho_{03}\\ \rho_{-n,3} \\ \vdots \\\rho_{-1,3}\end{pmatrix}\bar\lambda
\begin{pmatrix}\bar\rho_{-1,2}\lambda&\dots&\bar\rho_{-n,2}\lambda&\bar\rho_{02}\mu  & \bar \rho_{n2} & \dots& \bar \rho_{22}& 0\end{pmatrix}\right)T_{2,-3}(-1).
\end{align*}
Clearly $\psi_{1*}=e^t_1$, $\psi_{*,-1}=e_{-1}$ and 
$\psi_{32}=\rho_{32}\bar\rho_{-2,3}\lambda-\rho_{33}\bar\lambda \bar \rho_{-2,2}\lambda$. Since $A\equiv e\bmod I $, we have $\rho_{33}\equiv 1\bmod I $ and $\rho_{32}\in I$. It follows that $\psi_{32}\not\in I$ since $\rho_{-2,2}\not\in I$. Thus one can apply the previous lemma to $\psi$.\\ 
\\
\underline{case 4} Assume that $\tau_{-1,0}\not\in I_0$.\\
By cases 1-3 we can additionally assume that $\tau_{-1,j}\in I~\forall j\in\Theta_+$. By the definition of $I_0$ there is an $a\in J(\Delta)$ such that $\tau_{-1,0}a\not\in I$. Choose a $b\in R$ such that $(a,b)\in \Delta$ and set $\omega:={}^{T_{-3}(a,b)}\tau$. Then $\omega$ has the form 
\[\omega=\left(\begin{array}{cc|c|cc} 1 &0&0&0&0\\0&*&*&*&0\\\hline *&*&*&*&0\\\hline * &*&*&*&0\\ * &*&*&*&1\end{array}\right).\]
Further $\omega_{-1,3}\equiv -\tau_{-1,0}a\bmod I $ and therefore $\omega_{-1,3}\not\in I$. Thus one can either apply the previous lemma or case 1 to $\omega$.
\end{proof}
\begin{Proposition}\label{n4}
Let $H$ be an E-normal subgroup of $\U_{2n+1}(R,\Delta)$ such that $\EU_{2n+1}(I,\Omega^I_{min})\subseteq H$. If $H$ contains a matrix $\sigma\not\in \U_{2n+1}((R,\Delta),(I,\Omega^I_{max}))$ such that $\sigma_{*j}=e_j$ for some $j\in \Theta_{hb}$, then $H$ contains an elementary matrix which is not $(I,\Omega^I_{max})$-elementary. 
\end{Proposition}
\begin{proof}
In view of Definition \ref{24} we may assume that $\sigma_{*,-1}=e_{-1}$. It follows from Lemma \ref{20} that $\sigma_{1*}=e_1^t$. Hence $\sigma$ has the form
\[\sigma=\begin{pmatrix}1&0&0\\ *&E&0\\ *& *&1\end{pmatrix}\]
where $E\in \U_{2n-1}(R,\Delta)$. By Lemma \ref{n2} and Lemma \ref{n3} we can assume that $E\in  \U_{2n-1}((R,\Delta),(I,\Omega^I_{max}))$ and that there is an $i\in\{2,\dots,n\}$ such that $\sigma_{i1}\not\in I$. Then $\sigma_{-1,-i}\not\in I$ (consider $b(\sigma_{*1},\sigma_{*,-i})$). Choose a $j\in \{2,\dots, n\}\setminus\{i\}$ and set $\tau:=[\sigma,T_{-i,j}(1)]$. Then
\begin{align*}
\tau
=&[\sigma,T_{-i,j}(1)]\\
      =&(e+\sigma_{*,-i}\sigma'_{j*}-\sigma_{*,-j}\lambda\sigma'_{i*})T_{-i,j}(-1)\\
      =&\left(e+\begin{pmatrix}0 \\\sigma_{2,-i} \\  \vdots \\ \sigma_{n,-i}\\ \sigma_{0,-i} \\ \sigma_{-n,-i} \\ \vdots \\ \sigma_{-1,-i}\end{pmatrix}\bar\lambda
      \begin{pmatrix}\bar \sigma_{-1,-j}\lambda&\dots&\bar \sigma_{-n,-j}\lambda &\bar \sigma_{0,-j}\mu &\bar\sigma_{n,-j}&\dots&\bar\sigma_{2,-j}&0\end{pmatrix}\right.\\
      &\left.-\begin{pmatrix} 0 \\ \sigma_{2,-j} \\ \vdots \\ \sigma_{n,-j}\\ \sigma_{0,-j} \\ \sigma_{-n,-j} \\ \vdots \\ \sigma_{-1,-j}\end{pmatrix}\lambda\bar\lambda
            \begin{pmatrix}\bar \sigma_{-1,-i}\lambda&\dots&\bar \sigma_{-n,-i}\lambda&\bar\sigma_{0,-i}\mu  & \bar \sigma_{n,-i} & \dots& \bar \sigma_{2,-i}& 0\end{pmatrix}\right)T_{-i,j}(-1).
\end{align*}
Clearly $\tau_{1*}=e^t_1$ and $\tau_{*,-1}=e_{-1}$. Further \[\tau_{-1,j}=\sigma_{-1,-i}\bar\lambda\bar \sigma_{-j,-j}\lambda-\sigma_{-1,-j}\bar\sigma_{-j,-i}\lambda-(\sigma_{-1,-i}\bar\lambda\bar \sigma_{i,-j}-\sigma_{-1,-j}\bar\sigma_{i,-i}).\]
Since $\sigma_{-j,-i},\sigma_{i,-j},\sigma_{i,-i}\in I$, $\sigma_{-j,-j}\equiv 1\bmod I $ and $\sigma_{-1,-i}\not\in I$, it follows that $\tau_{-1,j}\not\in I$ and hence $\tau\not\in \U_{2n+1}((R,\Delta),(I,\Omega^I_{max}))$. Further $\tau_{k1}\in I$ for any $k\in\{2,\dots,n\}$. Thus one can apply Lemma \ref{n2} or Lemma \ref{n3} to $\tau$.
\end{proof}
\subsection{Preliminary results II}
In this subsection $I$ denotes an involution invariant ideal of $R$.
\begin{Definition}\label{68}
The subgroup of $\EU_{2n+1}(R,\Delta)$ consisting of all $f\in \EU_{2n+1}(R,\Delta)$ of the form 
\[f=\left(\begin{array}{c|c|c}A &u&B\\\hline 0&1&v\\\hline 0&0&C\end{array}\right)\]
where $A,B,C\in \M_{n}(R)$, $u\in M_{n\times 1}(R)$ and $v\in M_{1\times n}(R)$ is denoted by $\UEU_{2n+1}(R,\Delta)$. The subgroup of $\UEU_{2n+1}(R,\Delta)$ consisting of all upper triangular matrices $f\in \EU_{2n+1}(R,\Delta)$ with ones on the diagonal is denoted by $\TEU_{2n+1}(R,\Delta)$. 
\end{Definition}
\begin{Lemma}\label{69}
Suppose that $R$ is semilocal and let $\sigma\in \U_{2n+1}(R,\Delta)$. Then there is an $f\in \TEU_{2n+1}(R,\Delta)$ such that $(f\sigma)_{11}$ is left invertible.
\end{Lemma}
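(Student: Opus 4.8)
*Let $\sigma\in U_{2n+1}(R,\Delta)$. Then there is an $f\in TEU_{2n+1}(R,\Delta)$ such that $(f\sigma)_{11}$ is left invertible.*

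Write $u:=\sigma_{*1}$ for the first column of $\sigma$. Since $\sigma\in GL_{2n+1}(R)$ we have $\sum_{k\in\Theta}(\sigma^{-1})_{1k}u_k=1$, so $u$ is left unimodular. Left multiplication by an $f\in TEU_{2n+1}(R,\Delta)$ replaces the first column of $\sigma$ by $fu$, so it is enough to find $f\in TEU_{2n+1}(R,\Delta)$ with $(fu)_1$ left invertible. Any product of elementary short and extra short root matrices that is upper triangular with respect to the order $1\prec\dots\prec n\prec 0\prec -n\prec\dots\prec -1$ lies in $TEU_{2n+1}(R,\Delta)$ by Definition~\ref{68}; from Definition~\ref{21} one reads off that $T_{n,-j}(x)$ ($1\le j\le n-1$), $T_{1j}(x)$ ($2\le j\le n$) and $T_i(a,b)$ ($i\in\Theta_+$, $(a,b)\in\Delta^{-1}$) are all upper triangular, and that, acting from the left, $T_{n,-j}(x)$ adds $xu_{-j}$ to $u_n$ and a multiple of $u_{-n}$ to $u_j$; $T_{1j}(x)$ adds $xu_j$ to $u_1$ and a multiple of $u_{-1}$ to $u_{-j}$; and $T_i(a,b)$ adds $-\lambda^{-1}\bar a\mu\,u_0+b\,u_{-i}$ to $u_i$ and $a\,u_{-i}$ to $u_0$.

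The plan is to carry out a stable range reduction in three stages, always tracking the (changing) first column. First, reading $u$ as the unimodular tuple $(u_1,\dots,u_n,u_0,u_{-n},\dots,u_{-1})$ and applying Lemma~\ref{67} repeatedly --- legitimate since the semilocal ring $R$ satisfies $SR_m$ for every $m$ --- peel off $u_{-1},\dots,u_{-(n-1)}$ one at a time, absorbing $u_{-j}$ into $u_n$ by a suitable $T_{n,-j}(x_j)$. The only side effect of $T_{n,-j}(x_j)$ is to add a multiple of $u_{-n}$ to $u_j$; since $u_{-n}$ is kept active throughout this stage, each side effect is merely an elementary transformation of the current tuple and so preserves its unimodularity. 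At the end of this stage $(u_1,\dots,u_n,u_0,u_{-n})$ is unimodular. Secondly, concentrating this shorter column into its first $n$ coordinates --- i.e.\ reaching a unimodular $(u_1,\dots,u_n)$ --- is the heart of the matter: the coordinate $u_0$ can only be moved by the constrained extra short root matrices $T_i(a,b)$ (no short root matrix touches row or column $0$), so here one must invoke the $\Lambda$-stable range condition $\Lambda S_m$ rather than the plain one, in the same spirit as the stability arguments of \cite{bak-tang} for the even hyperbolic case; one should also note that $u_0=q_1(\sigma'e_1)\in J(\Delta)$ for the matrix $\sigma'$ produced so far (because $q(\sigma'e_1)\equiv q(e_1)=(0,0)\ (\mathrm{mod}\ \Delta)$), which is what makes admissible parameters $(a,b)\in\Delta^{-1}$ with the needed first component available. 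Finally, once $(u_1,\dots,u_n)$ is unimodular, the matrices $T_{1j}(x)$ ($2\le j\le n$) act on it as the transvections $u_1\mapsto u_1+xu_j$, their side effects landing in the now irrelevant negative block, so a last application of Lemma~\ref{67} inside $R^n$ yields $x_2,\dots,x_n$ with $u_1+\sum_{j=2}^n x_j u_j$ left invertible; the product $f$ of all the root matrices used, in order, is the desired element.

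The routine part is checking that the listed root matrices are upper triangular with the stated action. The main obstacle is the bookkeeping in the first two stages: every admissible elementary matrix perturbs a second coordinate besides the target one, so the order of elimination and the choice of absorbing coordinate must be arranged so that each side effect only ever adds a multiple of a coordinate that has not yet been discarded --- otherwise the current tuple loses unimodularity. Routing all side effects of the first stage through the single surviving negative coordinate $u_{-n}$, and then disposing of the isotropic coordinate $u_0$ (together with $u_{-n}$) by means of $\Lambda S_m$, is precisely what makes the argument go through.
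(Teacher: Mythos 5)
Your overall architecture (drive the unimodular first column to a left-invertible top entry by upper-triangular elementary matrices, using Lemma~\ref{67} and stable range) is the same as the paper's, and your stages 1 and 3 are sound. The genuine gap is stage 2, which you yourself label ``the heart of the matter'' and then do not carry out. Two things go wrong there. First, $\Lambda S_m$ as taken from \cite{bak-tang} is a statement about balanced hyperbolic $2m+2$-tuples in the \emph{even} unitary setting; it says nothing about a tuple $(u_1,\dots,u_n,u_0,u_{-n})$ containing the isotropic coordinate $u_0$, and after your stage 1 has discarded $u_{-1},\dots,u_{-(n-1)}$ the hyperbolic structure needed to invoke it is gone. (This is why the paper applies $\Lambda S_{n-1}$ to the \emph{intact} $2n$-tuple $(u^{(1)})_{hb}$, via a single even elementary unitary matrix $\phi^{2n+1}_{2n}(\rho)$ with $\rho$ upper unitriangular, instead of peeling negative coordinates one at a time.) Second, and more fundamentally, the only upper-triangular operations touching $u_0$ are the $T_i(a,b)$, whose coefficient on $u_0$ is forced to be $-\lambda^{-(1+\epsilon(i))/2}\bar a\mu$ with $(a,b)\in\Delta^{-\epsilon(i)}$; you cannot absorb $u_0$ with an arbitrary coefficient handed to you by Lemma~\ref{67}. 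Your remark that $u_0\in J(\Delta)$ does not address this: the constraint is on the coefficient multiplying $u_0$, not on $u_0$ itself. A further obstruction to your ordering: clearing the last remaining negative coordinate $u_{-n}$ from $(u_1,\dots,u_n,u_{-n})$ by a short root matrix $T_{j,-n}(x)$ necessarily adds a multiple of the already-discarded $u_{-j}$ to $u_n$, while $T_n(0,b)$ has its coefficient $b$ constrained to a form parameter; so arbitrary coefficients are unavailable there as well.

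The paper resolves precisely this point, and does so \emph{before} any reduction of the hyperbolic part. Writing $v=\sigma'_{1*}$, one has $\sum_k v_ku_k=1$, so $(u_1,\dots,u_n,v_0u_0,u_{-n},\dots,u_{-1})$ is unimodular with $v_0u_0$ treated as a single entry; Lemma~\ref{67} yields $x$ with $(u_1+xv_0u_0,u_2,\dots,u_n,u_{-n},\dots,u_{-1})$ unimodular, and the coefficient $xv_0=x\bar\lambda\bar\sigma_{0,-1}\mu$ (Lemma~\ref{20}) is of exactly the admissible form, realized by the single extra short root matrix $T_1(a)$ with $a=q(\sigma_{*,-1})\bullet(-\bar x)\in\Delta^{-1}$ --- here $q(\sigma_{*,-1})\in\Delta$ is what guarantees admissibility, and the residual term $yu_{-1}$ is harmless because $u_{-1}$ is still in the tuple. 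Only then is $\Lambda S_{n-1}$ applied to the full hyperbolic $2n$-tuple, and plain $SR$ finishes. Without this mechanism (or an odd-form stable range condition \`a la Petrov, which you neither state nor prove), your stage 2 remains an unjustified assertion, and it is not repairable by bookkeeping alone.
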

\begin{proof}
Since $R$ is semilocal, we have that $sr R=1$ where $sr R$ denotes the stable rank of $R$ (cf. \cite{vaserstein_stable}). Set $\Lambda:=\Lambda(\Delta)$. Then $R$ satisfies the $\Lambda$-stable range condition $\Lambda S_m$ for any $m\in\mathbb{N}$ (cf. \cite{bak-tang}).\\
\underline{step 1}\\Let $u$ be the first column of $\sigma$ and $v$ the first row of $\sigma^{-1}$. Since $vu=1$,
\[(u_1,\dots,u_n,v_0 u_0, u_{-n},\dots,u_{-1})^t\]
is unimodular. Since $sr R=1$, there is an $x\in R$ such that \[(u_1+xv_0 u_0,u_2,\dots,u_n,u_{-n},\dots,u_{-1})^t\] is unimodular. Since $\sigma\in \U_{2n+1}(R,\Delta)$, $q(\sigma_{*,-1})=(\sigma_{0,-1},q_2(\sigma_{*,-1}))\in\Delta$. It follows that $(-\sigma_{0,-1}\bar x,$ $\bar{\bar x}q_2(\sigma_{*,-1})\bar x)\in\Delta$ and hence $a:=(-\sigma_{0,-1}\bar x,\underline{\bar{\bar x}q_2(\sigma_{*,-1})\bar x})\in\Delta^{-1}$. Set $f_1:=T_{1}(a)\in \TEU_{2n+1}(R,\Delta)$ and $u^{(1)}:=f_1u$. One checks easily that 
\[(u^{(1)})_{hb}=(u_1+xv_0 u_0+yu_{-1},u_2,\dots,u_n,u_{-n},\dots,u_{-1})^t\]
where $y=\underline{\bar{\bar x}q_2(\sigma_{*,-1})\bar x}$ (note that $v_0=\sigma'_{10}=\bar \lambda \bar\sigma_{0,-1}\mu$ by Lemma \ref{20}). Therefore $(u^{(1)})_{hb}$ is unimodular.\\
\underline{step 2}\\
Since $(u^{(1)})_{hb}$ is unimodular and $R$ satisfies $\Lambda S_{n-1}$, there is a matrix 
\[\rho=\begin{pmatrix}e&\gamma\\0&e\end{pmatrix}\in \EU_{2n}(R,\Lambda)\]
where $\gamma\in \M_n(R)$ such that $(w_1,\dots,w_n)^t$ is unimodular where $w=\rho(u^{(1)})_{hb}$. Set
\[f_2=\phi^{2n+1}_{2n}(\rho)=\begin{pmatrix}e&0&\gamma\\0&1&0\\0&0&e\end{pmatrix}\in \TEU_{2n+1}(R,\Delta)\]
(where $\phi^{2n+1}_{2n}$ is defined as in Remark \ref{17}(b)) and $u^{(2)}:=f_2u^{(1)}$. Then clearly $(u^{(2)}_1,\dots,u^{(2)}_n)^t=(w_1,\dots,w_n)^t$ and hence $(u^{(2)}_1,\dots,$ $u^{(2)}_n)^t$ is unimodular.\\
\underline{step 3}\\
Since $(u^{(2)}_1,\dots,u^{(2)}_n)^t$ is unimodular and $sr R=1$, there is an $f_3\in \TEU_{2n+1}(R,\Delta)$ such that if $f_3u^{(2)}=u^{(3)}$, then $u^{(3)}_1$ is left invertible.\\
\\
Hence if $f=f_3f_2f_1\in \TEU_{2n+1}(R,\Delta)$, then $(f\sigma)_{11}$ is left invertible.  \end{proof}
\begin{Lemma}\label{70}
Suppose that $R$ is semilocal and let $\sigma\in \U_{2n+1}(R,\Delta)$. Then there is a $f\in \UEU_{2n+1}(R,\Delta)$ such that $((f\sigma)_{11},(f\sigma)_{21},\dots,$ $(f\sigma)_{n1})=(1,0,\dots,0)$ and $((f\sigma)_{12},(f\sigma)_{22},(f\sigma)_{32},\dots,(f\sigma)_{n2})=(0,1,0,\dots,0)$.
\end{Lemma}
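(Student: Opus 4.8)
The plan is to mimic the proof of Lemma \ref{69}, carrying out the reduction one column at a time and staying throughout inside $UEU_{2n+1}(R,\Delta)$.

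\emph{First column.} By Lemma \ref{69} we may, after left multiplication by an element of $TEU_{2n+1}(R,\Delta)\subseteq UEU_{2n+1}(R,\Delta)$, assume that $a:=\sigma_{11}$ is left invertible, say $ba=1$. Multiplying successively on the left by the short root matrices $T_{i1}(-\sigma_{i1}b)\in UEU_{2n+1}(R,\Delta)$ for $i=2,\dots,n$ (each has trivial first row and alters only the $i$-th and $(-1)$-st rows, so they do not interfere with one another and do not spoil left invertibility of the $(1,1)$-entry) we may assume in addition $\sigma_{i1}=0$ for $2\le i\le n$. Finally, left multiplication by $T_{21}(b)$, then $T_{12}(1-a)$, then $T_{21}(-1)$ --- all in $UEU_{2n+1}(R,\Delta)$ --- lets us assume $\sigma_{11}=1$, so that the positive block of the first column is now $e_1$. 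From here on, every root matrix $T_{ij}(x)$ with $i,j\in\Theta_+$ leaves this block fixed, since it modifies row $k\le n$ of the first column only by a (left) multiple of $\sigma_{j1}$, which vanishes for $j\ne 1$.

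\emph{Second column.} The key ingredient is an analogue of Lemma \ref{69} for the index $2$: there is a $g\in UEU_{2n+1}(R,\Delta)$ fixing the positive block of the first column such that $(g\sigma)_{22}$ is left invertible. To obtain it one reruns the three steps of the proof of Lemma \ref{69} ``based at $2$'': reading off the $2$nd row of $\sigma^{-1}\sigma=e$ shows that the $2$nd row of $\sigma^{-1}$ pairs to $0$ with the first column of $\sigma$ and to $1$ with the second, and since the positive block of the first column is $e_1$ this exhibits $\sigma_{*2}-\sigma_{*1}\sigma_{12}$ (whose first coordinate vanishes) as unimodular on $\Theta_{hb}\setminus\{1\}$ together with the $0$-coordinate; Lemma \ref{67}, the stable range conditions $SR_m$ and $\Lambda S_m$ (all valid since $R$ is semilocal), and the embedding $\phi^{2n+1}_{2n}$ then reduce the $(2,2)$-entry to a left invertible element, with $n\ge 3$ furnishing the extra coordinate needed to keep the first column fixed (or to renormalise it afterwards). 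Given $g$, one finishes exactly as for the first column: kill the entries below the $(2,2)$-entry in the second column by the appropriate $T_{i2}(\,\cdot\,)$ with $3\le i\le n$, make the $(2,2)$-entry equal to $1$ by a triple of root matrices in $\{2,3\}$ (here $n\ge 3$ is used), and kill the $(1,2)$-entry by $T_{12}(\,\cdot\,)$, which is harmless because the $(2,1)$-entry is $0$. Every matrix used in this round lies in $UEU_{2n+1}(R,\Delta)$ and fixes the positive block of the first column, so the product of all the matrices employed is the desired $f$.

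The step I expect to be the real obstacle is precisely the analogue of Lemma \ref{69} for the second column, i.e.\ arranging that the stable range reductions are performed by elements of $UEU_{2n+1}(R,\Delta)$ that do not destroy the already-normalised first column. This is where semilocality is genuinely used (it supplies all the stable range conditions $SR_m$ and $\Lambda S_m$, not just those invoked in Lemma \ref{69}), together with $n\ge 3$; the rest is routine, if somewhat lengthy, manipulation with the commutator relations of Lemma \ref{23}.
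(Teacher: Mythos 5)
Your overall strategy is the same as the paper's: normalise the positive block of the first column to $e_1$, then repeat the procedure for the second column inside the smaller unitary group on the indices $\{2,\dots,n,0,-n,\dots,-2\}$. Your treatment of the first column is fine (the paper instead cites Bass's transitivity theorem to send the unimodular vector $(u_1,\dots,u_n)^t$ to $e_1$, but your left-invertible-pivot plus Whitehead-style trick is a valid substitute). The problem is the second column, and it is exactly the step you yourself flag as ``the real obstacle'': you assert the existence of a $g\in UEU_{2n+1}(R,\Delta)$ \emph{fixing the positive block of the first column} with $(g\sigma)_{22}$ left invertible, but the construction you sketch does not produce such a $g$. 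The operations a rerun of Lemma \ref{69} ``based at $2$'' requires -- the extra short root matrix $T_2(a)$ in step 1 and the $\Lambda S$-element acting on rows $2,\dots,n,-n,\dots,-2$ in step 2 -- add left multiples of rows $0,-n,\dots,-2$ to rows $2,\dots,n$; since the entries $\sigma_{01},\sigma_{-n,1},\dots,\sigma_{-2,1}$ of the already-normalised first column are not zero in general, these operations dump junk into rows $2,\dots,n$ of column $1$. Renormalising column $1$ afterwards by $\prod T_{j1}(\cdot)$ (your parenthetical escape route) perturbs column $2$ in rows $2,\dots,n$ by left multiples of the still-uncleared $(1,2)$-entry, which can destroy the left invertibility or the normal form you have just achieved there; your proposed order of operations clears the $(1,2)$-entry only at the very end, so this circularity is not resolved. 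There is a second, smaller gap in the same place: the vector $\sigma_{*2}-\sigma_{*1}\sigma_{12}$ you exhibit as unimodular still has a potentially nonzero $(-1)$-coordinate, which lies outside the reach of the embedded group on $\{2,\dots,n,0,-n,\dots,-2\}$, so one must also justify discarding that coordinate.

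The paper resolves precisely these two points by a specific order of operations. It uses the auxiliary product $\epsilon$ of \emph{lower} elementary matrices (never incorporated into $f$, only used to compute) to show that the middle part $(G,z(H-CF),I-DF)^t$ of the modified second column -- with both the row-$1$ and row-$(-1)$ coordinates dropped -- is unimodular; it clears the $(1,2)$-entry by the column operation $\xi=T_{12}(-F)$ as a bookkeeping device; it performs the entire reduction of the middle part by a single element $f_3$ of the embedded $UEU_{2n-1}(R,\Delta)$ (which avoids rows $1$ and $-1$ altogether); it then renormalises the first column with $f_4=\prod T_{j1}(-B'_j)$, which is harmless for column $2$ exactly because the $(1,2)$-entry of $f_3\tau\xi$ is $0$; and only at the end does $f_5=T_{12}(-F)$ re-clear the $(1,2)$-entry, using that row $2$ restricted to the first two columns is now $(0,1)$. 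Without some equivalent of this dance, your argument does not close.
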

\begin{proof}
~\\
\underline{step 1}\\Let $u$ be the first column of $\sigma$. By steps 1 and 2 in the previous lemma, there is an $f_1\in \UEU_{2n+1}(R,\Delta)$ such that if $u^{(1)}=f_1u$, then $(u^{(1)}_1,\dots,u^{(1)}_n)^t$ is unimodular.\\
\underline{step 2}\\
Since $(u^{(1)}_1,\dots,u^{(1)}_n)^t$ is unimodular, there is an $f_2\in \UEU_{2n+1}(R,\Delta)$ such that if $f_2u^{(1)}=u^{(2)}$, then $(u^{(2)}_1,\dots,u^{(2)}_{n})^t=(1,0,\dots,0)^t$ (see \cite[Chapter V, (3.3)(1)]{bass_book}). Set $\tau:=f_2f_1\sigma$. Then the first two columns of $\tau$ equal
\[\begin{pmatrix}1&F\\0&G\\C&H\\D&I\\E&J\end{pmatrix}\]
for some $C,E,F,H,J\in R$ and $D,G,I\in \M_{(n-1)\times 1}(R)$.\\
\underline{step 3}\\
Set $\epsilon_1:=\prod\limits_{j=-n}^{-2}T_{j1}(-u^{(2)}_j)$ and $u^{(3)}:=\epsilon_1u^{(2)}$. Then clearly $u^{(3)}=e_1+e_0x+e_{-1}y$ for some $x,y\in R$. Since $u^{(3)}$ is the first column of a unitary matrix, $(x,y)=q(u^{(3)})\in\Delta$.
Set $\epsilon_2:=T_{-1}(\minus(x,y))$ and $u^{(4)}:=\epsilon_2u^{(3)}$. One checks easily that $u^{(4)}=e_1=(1,0,\dots,0)^t$. Set $\epsilon:=\epsilon_2\epsilon_1$. Since the first column of $\epsilon\tau$ equals $u^{(4)}=e_1$, its last row equals $e^t_{-1}$ (follows from Lemma \ref{20}). Hence the first two columns of $\epsilon\tau$ equal
\[\begin{pmatrix}1&F\\0&G\\0&H-CF\\0&I-DF\\0&0\end{pmatrix}.\]
It follows that $(G, z(H-CF), I-DF)^t$ is unimodular for some $z\in \overline{J(\Delta)}\mu$ (see step 1 in the previous lemma). Set $\xi:=T_{12}(-F)$. Then the first two columns of $\tau\xi$ equal
\[\begin{pmatrix}1&0\\0&G\\C&H-CF\\D&I-DF\\E&J-EF\end{pmatrix}.\]
Since $(G, z(H-CF), I-DF)^t$ is unimodular, there is a $\zeta\in \UEU_{2n-1}(R,\Delta)$ such that the first $n-1$ coefficients of $\zeta(G, H-CF, I-DF)^t$ equal $(1,0,\dots,0)^t$ (see steps 1 and 2). Set $f_3:=\psi_{2n-1}^{2n+1}(\zeta)\in \UEU_{2n+1}(R,\Delta)$. Then there are $C', H'\in R$ and $B',D',I'\in \M_{(n-1)\times 1}(R)$ such that the first two columns of $f_3\tau\xi$ equal
\[\begin{pmatrix}1&0\\B'&G'\\C'&H'\\D'&I'\\E&J-EF\end{pmatrix}\]
where $G'=(1,0,\dots,0)^t\in \M_{(n-1)\times 1}$. \\
\underline{step 4}\\
Set $f_4:=\prod\limits_{j=2}^n T_{j1}(-B'_j)\in \UEU_{2n+1}(R,\Delta)$. Then the first two columns of $f_4f_3\tau\xi$ equal
\[\begin{pmatrix}1&0\\0&G'\\C'&H'\\D'&I'\\E'&J'\end{pmatrix}\]
for some $E',J'\in R$. Hence the first two columns of $f_4f_3\tau$ equal
\[\begin{pmatrix}1&F\\0&G'\\C'&H''\\D'&I''\\E'&J''\end{pmatrix}\]
for some $H'', J''\in R$ and a $I''\in \M_{(n-1)\times 1}(R)$.\\
\underline{step 5}\\
Set $f_5:=T_{12}(-F)\in \UEU_{2n+1}(R,\Delta)$ and $f:=f_5\dots f_1\in \UEU_{2n+1}(R,\Delta)$. Then the first two columns of $f\sigma=f_5f_4f_3\tau$ equal
\[\begin{pmatrix}1&0\\0&G'\\C'&H''\\D''&I'''\\E'&J''\end{pmatrix}\]
for some $D'',I''\in \M_{(n-1)\times 1}(R)$.
\end{proof}
\begin{Lemma}\label{72}
Let $h\in \U_{2n+1}(R,\Delta)\setminus \CU_{2n+1}((R,\Delta),(I,\Omega^{I}_{max}))$. Then either
\begin{enumerate}[(1)]
\item there are an $f\in \EU_{2n+1}(R,\Delta)$ and an $x\in R$ such that \[[^fh,T_{1,-2}(x)]\not\in \U_{2n+1}((R,\Delta),(I,\Omega^{I}_{max}))\]
or
\item there are an $f\in \EU_{2n+1}(R,\Delta)$, an $x\in R$, a $k\in \Theta_{hb}$ and a $(y,z)\in \Delta^{-\epsilon(k)}$ such that \[[^f[h,T_{k}(y,z)],T_{1,-2}(x)]\not\in \U_{2n+1}((R,\Delta),(I,\Omega^{I}_{max})).\]
\end{enumerate} 
\end{Lemma}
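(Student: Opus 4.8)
The plan is to show that if $h\notin CU_{2n+1}((R,\Delta),(I,\Omega^I_{\max}))$, then after applying a suitable elementary conjugation and possibly first passing to a commutator $[h,T_k(y,z)]$, we can pin down a single entry or a single value of $q$ which fails the congruence defining $U_{2n+1}((R,\Delta),(I,\Omega^I_{\max}))$. Recall from Remark \ref{40} that $\sigma\in U_{2n+1}((R,\Delta),(I,\Omega^I_{\max}))$ iff $\sigma_{hb}\equiv e_{hb}\pmod I$ and $\sigma_{0*}\equiv e_0^t\pmod{\tilde I,\tilde I_0}$, and that the full congruence subgroup at the maximal parameter consists of those $\sigma$ whose commutator with every elementary generator lies in this group. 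So ``$h\notin CU_{2n+1}$'' means: there is an elementary generator $g$ (a short root $T_{ij}(x)$ or an extra short root $T_k(y,z)$) with $[h,g]$ violating condition (1) or (2) of Remark \ref{40}. The strategy is to \emph{normalise the position} of this violation to the fixed index pair $(1,-2)$ (for the short-root case) using the Weyl-group elements $P_{ij}$ from Definition \ref{24} and the conjugation formulas in Lemma \ref{25}, and for the extra-short-root case to first absorb the extra short root $T_k(y,z)$ into the commutator, which reduces the problem to the short-root case applied to $[h,T_k(y,z)]$.

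First I would dispose of the short-root case. Suppose $[h,T_{ij}(x)]\notin U_{2n+1}((R,\Delta),(I,\Omega^I_{\max}))$ for some $i\neq\pm j$ and $x\in R$. Since $n\geq 3$, the root system is large enough that there is an element $w$ of the Weyl group (a product of $P_{ab}$'s) conjugating the pair $(i,j)$ to $(1,-2)$; by Lemma \ref{25}(1)--(2), $^wT_{ij}(x)=T_{1,-2}(\pm x')$ for an appropriate $x'$ obtained from $x$ by the involution and powers of $\lambda$, and conjugation by $w$ is an automorphism of $U_{2n+1}(R,\Delta)$ preserving $EU_{2n+1}(R,\Delta)$ and (by Remark \ref{40}, since it permutes coordinates up to the symmetry $\lambda$) preserving $U_{2n+1}((R,\Delta),(I,\Omega^I_{\max}))$ — here one uses that $I$ is involution invariant. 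Hence with $f:=w$ we get $[^fh,T_{1,-2}(x')]=\,^w[h,T_{ij}(x)]\notin U_{2n+1}((R,\Delta),(I,\Omega^I_{\max}))$, which is alternative (1).

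Next the extra-short-root case. Suppose instead the only generators $g$ witnessing $h\notin CU_{2n+1}$ are extra short roots, say $[h,T_k(y,z)]\notin U_{2n+1}((R,\Delta),(I,\Omega^I_{\max}))$ for some $k\in\Theta_{hb}$ and $(y,z)\in\Delta^{-\epsilon(k)}$. Using relation (S1) if necessary we may take $k\in\Theta_-$ (this only changes $(y,z)$ by the symmetry and which copy $\Delta^{\pm1}$ it lives in). Set $h':=[h,T_k(y,z)]$. Now I claim $h'\notin CU_{2n+1}((R,\Delta),(I,\Omega^I_{\max}))$ as well — indeed $h'\notin U_{2n+1}((R,\Delta),(I,\Omega^I_{\max}))$ and the latter is contained in $CU_{2n+1}((R,\Delta),(I,\Omega^I_{\max}))$ \emph{only} up to the congruence obstruction, so in fact I should argue directly: the failure of $h'$ to lie in $U_{2n+1}((R,\Delta),(I,\Omega^I_{\max}))$ means (by Remark \ref{40}) that some hyperbolic entry $(h')_{ab}$ ($a\neq b$, $a,b\in\Theta_{hb}$) is not in $I$, or some entry of the zeroth row $(h')_{0*}$ is not in $\tilde I$ (resp. $\tilde I_0$ in the zeroth column). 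In either sub-case, commuting $h'$ with a single well-chosen short root $T_{cd}(u)$ and using Lemma \ref{71} (the explicit formulas for $q(\tau_{*k})$) together with the $q$-description in Lemma \ref{39}, one extracts from this a short-root commutator $[{}^{f'}h',T_{cd}(u)]$ that violates the congruence; then one transports $(c,d)$ to $(1,-2)$ by a Weyl element exactly as in the short-root case, absorbing that Weyl conjugation into $f'$. This gives alternative (2).

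The main obstacle is the bookkeeping in the extra-short-root case: one must check that \emph{whatever} kind of failure $h'=[h,T_k(y,z)]$ exhibits — a bad hyperbolic entry versus a bad zeroth-row/column entry, and within the $q$-conditions the difference between $\Omega^I_{\max}=\Delta\cap(\tilde I\times I)$ and the ambient $\Delta$ — it can be converted, by commuting with one more short root and reading off Lemma \ref{71}, into a genuine failure of $[{}^{f'}h',T_{1,-2}(x)]$ to lie in $U_{2n+1}((R,\Delta),(I,\Omega^I_{\max}))$. Two points need care here: (a) the ``error terms'' $y_k\in J(\sigma)$, $y_0\in J'(\sigma)$ etc.\ appearing in Lemma \ref{71} must be shown to lie in $\tilde I$ resp.\ $I$ whenever $h$ already satisfies enough congruences, so that they do not accidentally cancel the obstruction; and (b) one must verify that if \emph{every} short-root and extra-short-root commutator of $h$ did land in $U_{2n+1}((R,\Delta),(I,\Omega^I_{\max}))$, then $h$ would in fact lie in $CU_{2n+1}((R,\Delta),(I,\Omega^I_{\max}))$ (by Definition \ref{49} and the fact that $U_{2n+1}((R,\Delta),(I,\Omega^I_{\max}))\subseteq CU_{2n+1}((R,\Delta),(I,\Omega^I_{\max}))$ and $\Omega^I_{\max}$ is the largest parameter over $I$) — this is what licenses the dichotomy in the statement. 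Granting these, the two alternatives (1) and (2) exhaust the possibilities and the lemma follows.
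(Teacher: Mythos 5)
Your overall architecture matches the paper's: split according to whether some short-root commutator $[h,T_{ij}(x)]$ already fails to lie in $U_{2n+1}((R,\Delta),(I,\Omega^{I}_{max}))$, handle that case by transporting $(i,j)$ to $(1,-2)$ via the $P_{ab}$'s of Lemma \ref{25} (which in fact preserve the parameter $x$, so no modified $x'$ is needed), and otherwise pass to $h':=[h,T_k(y,z)]$ and reduce to the first case. The short-root case of your argument is fine.

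The genuine gap is in the extra-short-root case, at the step from ``$h'\notin U_{2n+1}((R,\Delta),(I,\Omega^{I}_{max}))$'' to ``some short-root commutator of $h'$ fails''. You assert, citing Remark \ref{40}, that the failure of $h'$ is witnessed by an \emph{off-diagonal} hyperbolic entry or a $0$-row entry; but Remark \ref{40} equally allows the failure to sit on the diagonal ($h'_{aa}\not\equiv 1 \ (mod~I)$ for some $a\in\Theta_{hb}$, or $h'_{00}\not\equiv 1\ (mod~\tilde I_0)$), and a bad diagonal entry does not in general produce a failing short-root commutator --- that is precisely why $CU_{2n+1}$ can be strictly larger than $U_{2n+1}$. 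So ``commuting $h'$ with a single well-chosen short root'' is not licensed by what you have established, and your dichotomy does not close. The paper fills exactly this hole: in its case 2 one first records that the hypothesis (every short-root commutator of $h$ lands in $U_{2n+1}((R,\Delta),(I,\Omega^{I}_{max}))$) forces $h_{ij}\in I$ for all $i\neq j$ in $\Theta_{hb}$ and $h_{0j}\in\tilde I$; from this one deduces that $[h,T_k(y,z)]$ agrees, modulo the relevant ideals, with a matrix supported off the identity only at the positions $(0,-k)$, $(k,0)$, $(k,-k)$ of an extra short root matrix, so the failure must occur at $(0,-k)\notin\tilde I$ or $(k,-k)\notin I$ --- both off-diagonal --- and the explicit short root $T_{1k}(1)$ then gives $[[h,T_k(y,z)],T_{1k}(1)]\notin U_{2n+1}((R,\Delta),(I,\Omega^{I}_{max}))$, after which case 1 applies to $h'$. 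A minor additional point: Lemma \ref{71} plays no role in the paper's proof of this lemma (it is used later, in Lemma \ref{73}), and appealing to it does not by itself control the diagonal entries either.
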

\begin{proof}~\\
\underline{case 1} Assume that $[h,T_{ij}(x)]\not\in \U_{2n+1}((R,\Delta),(I,\Omega^{I}_{max}))$ for some $x\in R$ and $i,j\in \Theta_{hb}$ such that $i\neq\pm j$.\\
By Lemma \ref{25} there is an $f\in \EU_{2n+1}(R,\Delta)$ such that $^fT_{ij}(x)=T_{1,-2}(x)$. Hence $[^fh,T_{1,-2}(x)]={}^f[h,T_{ij}(x)]\not\in \U_{2n+1}((R,\Delta),(I,\Omega^{I}_{max}))$. Thus (1) holds.\\
\\
\underline{case 2} Assume that $[h,T_{ij}(x)]\in \U_{2n+1}((R,\Delta),(I,\Omega^{I}_{max}))$ for any $x\in R$ and $i,j\in \Theta_{hb}$ such that $i\neq\pm j$.\\
One checks easily that in this case $h_{ij}\in I$ for any $i,j\in \Theta_{hb}$ such that $i\neq j$ and $h_{0j}\in \tilde I$ for any $j\in\Theta_{hb}$. Since $h\not\in \CU_{2n+1}((R,\Delta),(I,\Omega^{I}_{max}))$, there are a $k\in \Theta_{hb}$ and a $(y,z)\in \Delta^{-\epsilon(k)}$ such that $[h,T_{k}(y,z)]\not\in \U_{2n+1}((R,\Delta),(I,\Omega^{I}_{max}))$. One checks easily that $[h,T_{k}(y,z)]_{0,-k}\not\in \tilde I$ or $[h,T_{k}(y,z)]_{k,-k}\not\in I$. It follows that $[[h,T_{k}(y,z)],T_{1k}(1)]\not\in \U_{2n+1}((R,\Delta),(I,\Omega^{I}_{max}))$. Hence $[h,T_{k}(y,z)]\not\in \CU_{2n+1}((R,\Delta),(I,\Omega^{I}_{max}))$. Applying case 1 to $[h,T_{k}(y,z)]$ we get (2).
\end{proof}
\begin{Proposition}\label{123}
Suppose that $R$ is semilocal and let $H$ be an E-normal subgroup of $\U_{2n+1}(R,\Delta)$. If $H\not\subseteq \CU_{2n+1}((R,\Delta),(I,\Omega_{max}^I))$, then $H$ contains a matrix $\sigma\not\in \U_{2n+1}((R,\Delta),(I,\Omega^{I}_{max}))$ of the form 
\[\sigma=\left(\begin{array}{cc|c|c} A_1 &A_2&t_1&\multirow{2}{*}{B}\\0&e^{(n-2)\times (n-2)}& 0&\\\hline \multicolumn{2}{c|}{v}&z&w\\\hline \multicolumn{2}{c|}{C}&u&D\end{array}\right)\]
where $A_1\in \M_{2}(R)$, $A_2\in \M_{2 \times (n-2)}(R)$, $t_1\in \M_{2\times 1}(R)$, $B,C,D\in \M_{n}(R)$, $u\in \M_{n \times 1}(R)$, $v,w\in \M_{1 \times n}(R)$ and $z\in R$. 
\end{Proposition}
\begin{proof}
Suppose $H\not\subseteq \CU_{2n+1}((R,\Delta),(I,\Omega_{max}^I))$. Then, by the previous lemma there are an $h\in H$, an $f_0\in \EU_{2n+1}(R,\Delta)$ and an $x\in R$ such that $[^{f_0}h,T_{1,-2}(x)]\not\in \U_{2n+1}((R,\Delta),(I,\Omega^{I}_{max}))$. Set $g:=T_{1,-2}(x)$. By Lemma \ref{70} there is an $f_1\in \UEU_{2n+1}(R,\Delta)$ such that the first $n$ coefficients of $f_1(^{f_0}h)_{*1}$ equal $(1,0,\dots,0)^t$ and the first $n$ coefficients of $f_1(^{f_0}h)_{*2}$ equal $(0,1,0,$ $\dots,0)^t$. Set $\sigma:={}^{f_1}[^{f_0}h,g]$. Since $[^{f_0}h,g]\not\in \U_{2n+1}((R,\Delta),(I,\Omega^{I}_{max}))$ and $\U_{2n+1}((R,\Delta),(I,\Omega^{I}_{max}))$ is a normal subgroup of $\U_{2n+1}(R,\Delta)$, we have $\sigma\not\in \U_{2n+1}((R,\Delta),(I,\Omega^{I}_{max}))$. Clearly 
\begin{align*}
\sigma
=&{}^{f_1}[^{f_0}h,g]\\
=&{}^{f_1}(g^{-1}+(^{f_0}h)_{*1}x((^{f_0}h)^{-1})_{-2,*}g^{-1}-(^{f_0}h)_{*2}\bar\lambda\bar x((^{f_0}h)^{-1})_{-1,*}g^{-1})\\
=&{}^{f_1}g^{-1}+{}^{f_1}((^{f_0}h)_{*1}x((^{f_0}h)^{-1})_{-2,*}g^{-1})-{}^{f_1}((^{f_0}h)_{*2}\bar\lambda\bar x((^{f_0}h)^{-1})_{-1,*}g^{-1}).
\end{align*}
Hence the $i$-th row of $\sigma$ equals the $i$-th row of $^{f_1}g^{-1}$ for any $i\in \{3,\dots,n\}$. But $^{f_1}g^{-1}$ has the form 
\[^{f_1}g^{-1}=\left(\begin{array}{c|c|c} e^{n\times n} &0&*\\\hline 0&1&0\\\hline 0&0&e^{n\times n}\end{array}\right)\]
since $f_1\in \UEU_{2n+1}(R,\Delta)$. The assertion of the proposition follows. 
\end{proof}
\subsection{The case $\Omega=\Omega_{max}^I$ and $n\geq 4$}
In this subsection $I$ denotes an involution invariant ideal of $R$.
\begin{Lemma}\label{n5}
Suppose $n\geq 4$. Let $H$ be an E-normal subgroup of $\U_{2n+1}(R,\Delta)$ such that $\EU_{2n+1}(I,\Omega^I_{min})\subseteq H$. If $H$ contains a matrix $\sigma$ of the form
\[\sigma=\left(\begin{array}{c|c|c} A &t&B\\\hline v&z&w\\\hline C&u&D\end{array}\right)=\left(\begin{array}{cc|c|cc} A_1 &A_2&t_1&B_1&B_2\\0&e^{2\times 2}& 0&B_3&B_4\\\hline \multicolumn{2}{c|}{v}&z&\multicolumn{2}{|c}{w}\\\hline \multicolumn{2}{c|}{C}&u&\multicolumn{2}{|c}{D}\end{array}\right)\]
where $A,B,C,D\in \M_{n}(R)$, $t,u\in \M_{n \times 1}(R)$, $v,w\in \M_{1 \times n}(R)$, $z\in R$, $A_1,B_2\in \M_{n-2}(R)$, $A_2,B_1\in \M_{(n-2) \times 2}(R)$, $B_3\in \M_2(R)$, $B_4\in \M_{2 \times (n-2)}(R)$ and $t_1\in \M_{(n-2) \times 1}(R)$ such that either $A\not\equiv e\bmod I $ or $v\not\equiv 0\bmod \tilde I$ or $C\not\equiv 0\bmod I $, then $H$ contains an elementary matrix which is not $(I,\Omega^I_{max})$-elementary. 
\end{Lemma}
\begin{proof}
Clearly there is a $j\in\Theta_+$ such that $\sigma_{*j}\not\equiv e_j\bmod I,\tilde I $.\\
\\
\underline{case 1} Assume there is a $j\in\{1,\dots,n-2\}$ such that $\sigma_{*j}\not\equiv e_j\bmod I,\tilde I $.\\
Set $\tau:=[\sigma,T_{j,-(n-1)}(1)]$. Clearly the $(n-1)$-th row of
\begin{align*}
\tau
=&[\sigma,T_{j,-(n-1)}(1)]\\
=&(e+\sigma_{*j}\sigma'_{-(n-1),*}-\sigma_{*,n-1}\bar\lambda\sigma'_{-j,*})T_{j,-(n-1)}(-1)\\
=&\left(e+\begin{pmatrix} \sigma_{1j} \\ \vdots \\ \sigma_{n-2,j} \\ 0\\ 0\\ \sigma_{0j} \\\sigma_{-n,j}\\ \vdots\\ \sigma_{-1,j}\end{pmatrix}
\begin{pmatrix}\bar \sigma_{-1,n-1}\lambda&\dots&\bar \sigma_{-n,n-1}\lambda&~\bar \sigma_{0,n-1}\mu  & 0 & 1~ &\bar\sigma_{n-2,n-1}  &\dots&\bar \sigma_{1,n-1} \end{pmatrix}\right.\\
&\left.-\begin{pmatrix} \sigma_{1,n-1} \\ \vdots \\ \sigma_{n-2,n-1} \\ 1 \\ 0 \\ \sigma_{0,n-1} \\ \sigma_{-n,n-1}\\ \vdots\\ \sigma_{-1,n-1}\end{pmatrix}\bar\lambda
\begin{pmatrix}\bar \sigma_{-1,j}\lambda&\dots&\bar \sigma_{-n,j}\lambda&~\bar \sigma_{0j}\mu  & 0 &0~& \bar\sigma_{n-2,j}  & \dots& \bar \sigma_{1j}\end{pmatrix}\right)T_{j,-(n-1)}(-1)
\end{align*}
is not congruent to $e^t_{n-1}$ modulo $I,I_0$ since $\sigma_{*j}\not\equiv e_j \bmod I,\tilde I $. Hence $\tau\not\in \U_{2n+1}((R,\Delta),(I,\Omega^I_{max}))$. Further $\tau_{*,-n}=e_{-n}$ and thus one can apply Proposition \ref{n4} to $\tau$.\\
\\
\underline{case 2} Assume $\sigma_{*,n-1}\not\equiv e_{n-1}\bmod I,\tilde I $.\\
By case 1 we can additionally assume that $\sigma_{*j}\equiv e_j\bmod I,\tilde I ~\forall j\in\{1,\dots,n-2\}$.\\
\\
\underline{case 2.1} Assume that $\sigma_{1,n-1}\in I$.\\
Consider the first row of 
\begin{align*}
\tau
:=&[\sigma,T_{1,-(n-1)}(1)]\\
=&(e+\sigma_{*1}\sigma'_{-(n-1),*}-\sigma_{*,n-1}\bar\lambda\sigma'_{-1,*})T_{1,-(n-1)}(-1)\\
=&\left(e+\begin{pmatrix} \sigma_{11} \\ \vdots \\ \sigma_{n-2,1}\\ 0\\0\\ \sigma_{01} \\ \sigma_{-n,1}\\\vdots\\\sigma_{-1,1}\end{pmatrix}
\begin{pmatrix}\bar \sigma_{-1,n-1}\lambda&\dots&\bar \sigma_{-n,n-1}\lambda & ~\bar\sigma_{0,n-1}\mu &0 & 1~ & \bar \sigma_{n-2,n-1}& \dots &\bar \sigma_{1,n-1} \end{pmatrix}\right.\\
&\left.-\begin{pmatrix}\sigma_{1,n-1} \\\vdots \\\sigma_{n-2,n-1} \\1 \\ 0\\ \sigma_{0,n-1} \\ \sigma_{-n,n-1}\\ \vdots\\ \sigma_{-1,n-1}\end{pmatrix}\bar\lambda
\begin{pmatrix}\bar \sigma_{-1,1}\lambda&\dots&\bar \sigma_{-n,1}\lambda &~ \bar\sigma_{01}\mu &0  &0~& \bar\sigma_{n-2,1} &\dots & \bar \sigma_{11}\end{pmatrix}\right)T_{1,-(n-1)}(-1)
\end{align*}
which equals 
\setlength\jot{0.5cm}
\begin{align*}
&e^t_1+\sigma_{11}\begin{pmatrix}\bar \sigma_{-1,n-1}\lambda&\dots&\bar \sigma_{-n,n-1}\lambda & ~\bar\sigma_{0,n-1}\mu &0 & 1~ & \bar \sigma_{n-2,n-1}& \dots &\bar \sigma_{1,n-1} \end{pmatrix}\\
&-\sigma_{1,n-1}\bar\lambda\begin{pmatrix}\bar \sigma_{-1,1}\lambda&\dots&\bar \sigma_{-n,1}\lambda &~ \bar\sigma_{01}\mu &0  &0~& \bar\sigma_{n-2,1} &\dots & \bar \sigma_{11}\end{pmatrix}+e^t_{-(n-1)}x_1+e^t_{-1}x_2
\end{align*}
for some  $x_1,x_2\in R$. It is clearly not
congruent to $e^t_1$ modulo $I,I_0$ since $\sigma_{11}\equiv 1\bmod I $, $\sigma_{*,n-1}\not\equiv e_{n-1} \bmod I,\tilde I$ and $\sigma_{1,n-1}\in I$. Hence $\tau\not\in \U_{2n+1}((R,\Delta),(I,\Omega_{max}^I))$. Further $\tau_{*,-n}=e_{-n}$ and thus one can apply Proposition \ref{n4} to $\tau$.\\
\setlength\jot{3pt}
\\
\underline{case 2.2} Assume $\sigma_{1,n-1}\not\in I$.\\
Consider the second row of 
\begin{align*}
\tau
:=&[\sigma,T_{2,-(n-1)}(1)]\\
=&(e+\sigma_{*2}\sigma'_{-(n-1),*}-\sigma_{*,n-1}\bar\lambda\sigma'_{-2,*})T_{2,-(n-1)}(-1)\\
=&\left(e+\begin{pmatrix} \sigma_{12} \\\vdots \\\sigma_{n-2,2}\\0 \\ 0\\ \sigma_{02} \\ \sigma_{-n,2}\\\vdots\\ \sigma_{-1,2}\end{pmatrix}\begin{pmatrix}\bar \sigma_{-1,n-1}\lambda&\dots&\bar \sigma_{-n,n-1}\lambda & ~\bar\sigma_{0,n-1}\mu &0 & 1~&\bar\sigma_{n-2,n-1}&\dots \bar \sigma_{1,n-1}\end{pmatrix}\right.
\end{align*}
\begin{align*}
&\left.-\begin{pmatrix}\sigma_{1,n-1} \\\vdots \\ \sigma_{n-2,n-1} \\ 1 \\ 0\\ \sigma_{0,n-1} \\ \sigma_{-n,n-1}\\ \vdots\\ \sigma_{-1,n-1}\end{pmatrix}\bar\lambda
\begin{pmatrix}\bar \sigma_{-1,2}\lambda&\dots&\bar \sigma_{-n,2}\lambda &~ \bar\sigma_{02}\mu &0 &0~ &\bar\sigma_{n-2,2}&\dots& \bar \sigma_{12}\end{pmatrix}\right)T_{2,-(n-1)}(-1)
\end{align*}
which equals 
\setlength\jot{0.5cm}
\begin{align*}
&e_2^t+\sigma_{22}\begin{pmatrix}\bar \sigma_{-1,n-1}\lambda&\dots&\bar \sigma_{-n,n-1}\lambda & ~\bar\sigma_{0,n-1}\mu &0 & 1~&\bar\sigma_{n-2,n-1}&\dots &\bar \sigma_{1,n-1} \end{pmatrix}\\
&-\sigma_{2,n-1}\bar\lambda\begin{pmatrix}\bar \sigma_{-1,2}\lambda&\dots&\bar \sigma_{-n,2}\lambda &~ \bar\sigma_{02}\mu &0 &0~ &\bar\sigma_{n-2,2}&\dots& \bar \sigma_{12}\end{pmatrix}+e^t_{-(n-1)}x_1+e^t_{-2}x_2
\end{align*}
for some  $x_1,x_2\in R$. Its last entry clearly does not lie in $I$. Hence $\tau\not\in \U_{2n+1}((R,\Delta),(I,\Omega_{max}^I))$. Clearly $\tau_{*,-n}=e_{-n}$ an thus one can apply Proposition \ref{n4} to $\tau$.\\
\setlength\jot{3pt}
\\
\underline{case 3} Assume $\sigma_{*n}\not\equiv e_{n}\bmod I,\tilde I $.\\
This case is very similar to case 2 and hence is omitted.
\end{proof}
\begin{Proposition}\label{p1}
Suppose that $R$ is semilocal and $n\geq 4$. Let $H$ be an E-normal subgroup of $\U_{2n+1}(R,\Delta)$ such that $\EU_{2n+1}(I,\Omega^I_{min})\subseteq H$. If $H\not\subseteq \CU_{2n+1}((R,\Delta),(I,\Omega_{max}^I))$, then $H$ contains an elementary matrix which is not $(I,\Omega^I_{max})$-elementary.
\end{Proposition}
\begin{proof}
By Proposition \ref{123}, $H$ contains a matrix $\sigma\not\in \U_{2n+1}((R,\Delta),(I,\Omega_{max}^I))$ of the form \[\sigma=\left(\begin{array}{c|c|c} A &t&B\\\hline v&z&w\\\hline C&u&D\end{array}\right)=\left(\begin{array}{cc|c|cc} A_1 &A_2&t_1&B_1&B_2\\0&e^{2\times 2}& 0&B_3&B_4\\\hline \multicolumn{2}{c|}{v}&z&\multicolumn{2}{|c}{w}\\\hline \multicolumn{2}{c|}{C}&u&\multicolumn{2}{|c}{D}\end{array}\right)\]
where $A, B,\dots$ have the same size as in the previous lemma. By the previous lemma we may assume that $A\equiv e\bmod I $, $v\equiv 0\bmod \tilde I$ and $C\equiv 0\bmod I $. Clearly 
\[\delta_{ij}=(\sigma^{-1}\sigma)_{ij}=\sum\limits_{k=1}^{-1}\sigma'_{ik}\sigma_{kj}\equiv\sigma'_{ij}\bmod I ~\forall i,j\in\Theta_+ \]
where $\delta_{ij}$ is the Kronecker delta (note that $\sigma'_{i0}\sigma_{0j}\in I$ since $\sigma'_{i0}\in\overline{J(\Delta)}\mu$ by Lemma \ref{20} and $\sigma_{0j}\in \tilde I$). It follows from Lemma \ref{20}, that $\sigma_{ij}\equiv \delta_{ij}\bmod I ~\forall i,j\in\Theta_-$, i.e. $D\equiv e\bmod I $. Hence $B\not\equiv 0\bmod I $ or $w\not\equiv 0\bmod \tilde I$ or $z\not\equiv 1\bmod \tilde I_0$ since $\sigma\not\in \U_{2n+1}((R,\Delta),(I,\Omega_{max}^I))$. \\
\\
\underline{case 1}  Assume that $B\not\equiv 0\bmod I $.\\
\\
\underline{case 1.1} Assume that $B_3\not\equiv 0 \bmod I $ or $B_4\not\equiv 0 \bmod I $.\\
Clearly
\begin{align*}
 \tau
 :=&[\sigma^{-1},T_{-n,n-1}(1)]\\
 =&(e+\sigma'_{*,-n}\sigma_{n-1,*}-\sigma'_{*,-(n-1)}\lambda\sigma_{n*})T_{-n,n-1}(-1)\\
 =&\left(e+\begin{pmatrix}\bar\lambda\bar \sigma_{n,-1} \\ \vdots \\ \bar\lambda\bar \sigma_{n,-n}\\ \sigma'_{0,-n} \\ 1 \\ 0 \\ \vdots \\ 0\end{pmatrix}
 \begin{pmatrix}0&\dots&0& 1 & 0 &0&\sigma_{n-1,-n}& \dots & \sigma_{n-1,-1} \end{pmatrix}\right.\\
 &\left.-\begin{pmatrix}\bar\lambda\bar \sigma_{n-1,-1} \\\vdots \\\bar\lambda\bar \sigma_{n-1,-n}\\\sigma'_{0,-(n-1)} \\ 0\\1 \\0 \\\vdots \\ 0\end{pmatrix}\lambda
 \begin{pmatrix}0&\dots&0& 1&0&\sigma_{n,-n}& \dots & \sigma_{n,-1}\end{pmatrix}\right)T_{-n,n-1}(-1)
\end{align*}
Since $B_3\not\equiv 0 \bmod I $ or $B_4\not\equiv 0 \bmod I $, we have $(\tau_{-n,-n},\dots,\tau_{-n,-1})\not\equiv (1,0,\dots,0)\bmod I $ or
$(\tau_{-(n-1),-n},\dots,$ $\tau_{-(n-1),-1})\not\equiv (0,1,0,\dots,0)\bmod I $. 
Hence $\tau\not\in \U_{2n+1}((R,\Delta),(I,$ $\Omega_{max}^I))$.
Further $\tau_{*2}=e_2$ and thus one can apply Proposition \ref{n4} to $\tau$.\\
\\
\underline{case 1.2} Assume that $B_2\not\equiv 0\bmod I $.\\
By case 1.1 we can additionally assume that $B_3\equiv 0 \bmod I $ and $B_4\equiv 0 \bmod I $.
Set 
\[\xi:=\prod\limits_{k=-(n-2)}^{-1}T_{n-1,k}(-\sigma_{n-1,k})\prod\limits_{k=-(n-2)}^{-1}T_{nk}(-\sigma_{nk})\in \EU_{2n+1}(I,\Omega^I_{min})\subseteq H\]
and $\tau:=\sigma\xi$. Then $\tau$ has the form 
\[\tau=\left(\begin{array}{c|c|c} A' &t'&B'\\\hline v'&z'&w'\\\hline C'&u'&D'\end{array}\right)=\left(\begin{array}{cc|c|cc} A'_1 &A'_2&t'_1&B'_1&B'_2\\0&e^{2\times 2}& 0&B'_3&0\\\hline \multicolumn{2}{c|}{v'}&z'&\multicolumn{2}{|c}{w'}\\\hline \multicolumn{2}{c|}{C'}&u'&\multicolumn{2}{|c}{D'}\end{array}\right)\]
where $A'$ has the same size as $A$, $B'$ has the same size as $B$ and so on.
Clearly $\tau\equiv \sigma\bmod I,\tilde I,I_0,\tilde I_0 $ and hence $B'_2\not\equiv 0\bmod I $. Therefore there are an $i\in\{1,\dots,n-2\}$ and a $j\in\{-(n-2),\dots,-1\}$ such that $\tau_{ij}\not\in I$. Choose a $k\in \{1,\dots,n-2\}\setminus\{-j\}$ and set $\omega:={}^{T_{jk}(-1)}\tau$. Then $\omega$ has the form 
\[\omega=\left(\begin{array}{c|c|c} A'' &t''&B''\\\hline v''&z''&w''\\\hline C''&u''&D''\end{array}\right)=\left(\begin{array}{cc|c|cc} A''_1 &A''_2&t''_1&B''_1&B''_2\\0&e^{2\times 2}& 0&B''_3&0\\\hline \multicolumn{2}{c|}{v''}&z''&\multicolumn{2}{|c}{w''}\\\hline \multicolumn{2}{c|}{C''}&u''&\multicolumn{2}{|c}{D''}\end{array}\right)\]
where $A''$ has the same size as $A'$, $B''$ has the same size as $B'$ and so on. Further $\omega_{ik}\not\equiv \delta_{ik}\bmod I $. Hence $A''\not\equiv e\bmod I $ and thus one can apply the previous lemma to $\omega$.\\
\\
\underline{case 1.3} Assume that $B_1\not\equiv 0\bmod I $.\\
By cases 1.1 and 1.2 we can additionally assume that $B_3\equiv 0 \bmod I $, $B_4\equiv 0 \bmod I $ and $B_2\equiv 0\bmod I $. Since $B_1\not\equiv 0\bmod I $, there are an $i\in\{1,\dots,n-2\}$ and a $j\in\{-n,-(n-1)\}$ such that $\sigma_{ij}\not\in I$. Set $\tau:={}^{T_{j,-1}(-1)}\sigma$. Then $\tau$ has the form 
\[\tau=\left(\begin{array}{c|c|c} A' &t'&B'\\\hline v'&z'&w'\\\hline C'&u'&D'\end{array}\right)=\left(\begin{array}{cc|c|cc} A'_1 &A'_2&t'_1&B'_1&B'_2\\0&e^{2\times 2}& 0&B'_3&B'_4\\\hline \multicolumn{2}{c|}{v'}&z'&\multicolumn{2}{|c}{w'}\\\hline \multicolumn{2}{c|}{C'}&u'&\multicolumn{2}{|c}{D'}\end{array}\right)\]
where $A'$ has the same size as $A$, $B'$ has the same size as $B$ and so on.
Clearly $A'\equiv e\bmod I $, $v'\equiv 0\bmod \tilde I$, $C'\equiv 0\bmod I $, $D'\equiv e\bmod I $, $ B'_3\equiv 0 \bmod I $, $B'_4\equiv 0 \bmod I $. Further $\tau_{i,-1}\not\in I$ and hence $B'_2\not\equiv 0\bmod I $. Thus one can apply case 1.2 to $\tau$.\\
\\
\underline{case 2} Assume that $w\not\equiv 0\bmod \tilde I$.\\
By case 1 we may additionally assume that $B\equiv 0\bmod I $. Set 
\[\xi:=\prod\limits_{k=-(n-2)}^{-1}T_{n-1,k}(-\sigma_{n-1,k})\prod\limits_{k=-(n-2)}^{-1}T_{nk}(-\sigma_{nk})\in \EU_{2n+1}(I,\Omega_{min}^I)\subseteq H\]
and $\tau:=\sigma\xi$. Then $\tau$ has the form 
\[\tau=\left(\begin{array}{c|c|c} A' &t'&B'\\\hline v'&z'&w'\\\hline C'&u'&D'\end{array}\right)=\left(\begin{array}{cc|c|cc} A'_1 &A'_2&t'_1&B'_1&B'_2\\0&e^{2\times 2}& 0&B'_3&0\\\hline \multicolumn{2}{c|}{v'}&z'&\multicolumn{2}{|c}{w'}\\\hline \multicolumn{2}{c|}{C'}&u'&\multicolumn{2}{|c}{D'}\end{array}\right)\]
where $A'$ has the same size as $A$, $B'$ has the same size as $B$ and so on.
Clearly $\tau\equiv \sigma\bmod I,\tilde I,I_0,\tilde I_0 $ and hence $w'\not\equiv 0\bmod \tilde I$. Therefore there is a $j\in \Theta_{-}$ such that $w'_j\not\in \tilde I$. By the definition of $\tilde I$ there is an $a\in J(\Delta)$ such that $\bar a\mu w'_j\not \in I$. Choose a $b\in R$ such that $(a,b)\in \Delta$ and set $\omega:={}^{T_{-1}(a,b)}\tau$. Then $\omega$ has the form 
\[\omega=\left(\begin{array}{c|c|c} A'' &t''&B''\\\hline v''&z''&w''\\\hline C''&u''&D''\end{array}\right)=\left(\begin{array}{cc|c|cc} A''_1 &A''_2&t''_1&B''_1&B''_2\\0&e^{2\times 2}& 0&B''_3&0\\\hline \multicolumn{2}{c|}{v''}&z''&\multicolumn{2}{|c}{w''}\\\hline \multicolumn{2}{c|}{C''}&u''&\multicolumn{2}{|c}{D''}\end{array}\right)\]
where $A''$ has the same size as $A'$, $B''$ has the same size as $B'$ and so on. Further $\omega_{-1,j}\not\equiv\delta_{-1,j}\bmod I$. Hence $D''\not\equiv e\bmod I $. It follows that either $A''\not\equiv e\bmod I $ or $v''\not\equiv 0\bmod \tilde I$ or $C''\not\equiv 0\bmod I $ and thus one can apply the previous lemma to $\omega$.\\
\\
\underline{case 3} Assume that $z\not\equiv 1\bmod \tilde I_0$.\\
By cases 1 and 2 we may additionally assume that $B\equiv 0\bmod I $ and $w\equiv 0\bmod \tilde I$. Set 
\[\xi:=\prod\limits_{k=-(n-2)}^{-1}T_{n-1,k}(-\sigma_{n-1,k})\prod\limits_{k=-(n-2)}^{-1}T_{nk}(-\sigma_{nk})\in \EU_{2n+1}(I,\Omega_{min}^I)\subseteq H\]
and $\tau:=\sigma\xi$. Then $\tau$ has the form 
\[\tau=\left(\begin{array}{c|c|c} A' &t'&B'\\\hline v'&z'&w'\\\hline C'&u'&D'\end{array}\right)=\left(\begin{array}{cc|c|cc} A'_1 &A'_2&t'_1&B'_1&B'_2\\0&e^{2\times 2}& 0&B'_3&0\\\hline \multicolumn{2}{c|}{v'}&z'&\multicolumn{2}{|c}{w'}\\\hline \multicolumn{2}{c|}{C'}&u'&\multicolumn{2}{|c}{D'}\end{array}\right)\]
where $A'$ has the same size as $A$, $B'$ has the same size as $B$ and so on. Clearly $\tau\equiv \sigma\bmod I,\tilde I,I_0,\tilde I_0 $ and hence $z'\not\equiv 1\bmod \tilde I_0$. It follows from the definition of $\tilde I_0$ that there is an $a\in J(\Delta)$ such that $(z'-1)a\not\in \tilde I$. Choose a $b\in R$ such that $(a,b)\in \Delta$ and set $\omega:={}^{T_{-1}(a,b)}\tau$. Then $\omega$ has the form 
\[\omega=\left(\begin{array}{c|c|c} A'' &t''&B''\\\hline v''&z''&w''\\\hline C''&u''&D''\end{array}\right)=\left(\begin{array}{cc|c|cc} A''_1 &A''_2&t''_1&B''_1&B''_2\\0&e^{2\times 2}& 0&B''_3&0\\\hline \multicolumn{2}{c|}{v''}&z''&\multicolumn{2}{|c}{w''}\\\hline \multicolumn{2}{c|}{C''}&u''&\multicolumn{2}{|c}{D''}\end{array}\right)\]
where $A''$ has the same size as $A'$, $B''$ has the same size as $B'$ and so on. Further $\omega_{01}\not\in\tilde I$. Hence $v''\not\equiv 0\bmod \tilde I$ and thus one can apply the previous lemma to $\omega$.
\end{proof}
\subsection{The case $\Omega=\Omega_{max}^I$ and $n=3$}
In this subsection $I$ denotes an involution invariant ideal of $R$. If in a matrix the symbol $+$ appears at a position $(i,j)$, it means that the entry at this position is congruent to $\delta_{ij}$ modulo $I$ if $i,j\in \Theta_{hb}$, congruent to $\delta_{0j}$ modulo $\tilde I$ if $i=0,j\in \Theta_{hb}$, congruent to $\delta_{i0}$ modulo $I_0$ if $i\in \Theta_{hb},j=0$ and congruent to $1$ modulo $\tilde I_0$ if $i=j=0$. A $*$ stands for an arbitrary ring element. If positions in a matrix are marked by a $-$, it means that the entry at one of these positions does not lie in $I$ (resp. $\tilde I$, $I_0$, $\tilde I_0$) respectively is not congruent to $1$ modulo $I$ (resp. $\tilde I$, $I_0$, $\tilde I_0$). For example, given a matrix $\sigma\in \U_7(R,\Delta)$, an equation like
\[\sigma=\left(\begin{array}{ccc|c|ccc} *&*&*&*&*&*&*\\ *&*&*&*&*&*&*\\+&+&*&*&*&-&-\\\hline +&+&*&*&*&-&-\\\hline +&+&*&*&*&-&-\\ *&*&*&*&*&*&*\\ *&*&*&*&*&*&*\end{array}\right)\]
means that $\sigma_{31}, \sigma_{32},\sigma_{-3,1},\sigma_{-3,2}\in I$, $\sigma_{01},\sigma_{02}\in \tilde I$ and further that one of the entries $\sigma_{3,-2}, \sigma_{3,-1},\sigma_{-3,-2},\sigma_{-3,-1}$ does not lie in $I$ or one of the entries $\sigma_{0,-2},\sigma_{0,-1}$ does not lie in $\tilde I$.
\begin{Lemma}\label{o1}
Let $H$ be an E-normal subgroup of $\U_{7}(R,\Delta)$ such that $\EU_{7}(I,\Omega^I_{min})\subseteq H$. If $H$ contains a matrix $\sigma$ of the form 
\[\sigma=\left(\begin{array}{ccc|c|ccc} *&*&*&*&*&*&*\\ *&*&*&*&*&*&*\\ 0&0&*&*&*&*&*\\\hline -&-&*&*&*&*&*\\\hline -&-&*&*&*&*&*\\-&-&*&*&*&*&*\\-&-&*&*&*&*&*\end{array}\right),\]
then $H$ contains an elementary matrix which is not $(I,\Omega^I_{max})$-elementary.
\end{Lemma}
\begin{proof}
Set $\tau:=[\sigma,T_{1,-2}(1)]$. Then 
\begin{align*}
\tau
=&[\sigma,T_{1,-2}(1)]\\
=&(e+\sigma_{*1}\sigma'_{-2,*}-\sigma_{*2}\bar\lambda\sigma'_{-1,*})T_{1,-2}(-1)\\
=&\left(e+\begin{pmatrix}\sigma_{11}\\\sigma_{21}\\0\\ \sigma_{01}\\\sigma_{-3,1}\\ \sigma_{-2,1} \\ \sigma_{-1,1}\end{pmatrix}
\begin{pmatrix}\bar\sigma_{-1,2}\lambda&\bar\sigma_{-2,2}\lambda&\bar\sigma_{-3,2}\lambda&\bar \sigma_{02}\mu &0&\bar\sigma_{22}&\bar\sigma_{12}\end{pmatrix}\right.\\
&\left.-\begin{pmatrix}\sigma_{12}\\\sigma_{22}\\0\\ \sigma_{02}\\\sigma_{-3,2}\\ \sigma_{-2,2} \\ \sigma_{-1,2}\end{pmatrix}\bar\lambda
\begin{pmatrix}\bar\sigma_{-1,1}\lambda&\bar\sigma_{-2,1}\lambda&\bar\sigma_{-3,1}\lambda&\bar\sigma_{01}\mu &0&\bar\sigma_{21}&\bar\sigma_{11}\end{pmatrix}\right)T_{1,-2}(-1).
\end{align*}
Assume that $\tau\in \U_{7}((R,\Delta),(I,\Omega_{max}^I))$. Then $\tau\equiv e\bmod I,\tilde I,I_0,\tilde I_0 $ by Remark \ref{40}. Hence $\sigma_{*1}\bar\sigma_{-j,2}\lambda-\sigma_{*2}\bar\lambda\bar\sigma_{-j,1}\lambda=\tau_{*j}-e_j\equiv 0\bmod I,\tilde I ~\forall j\in\{1,2,3\}$ and $\sigma_{*1}\bar\sigma_{02}\mu-\sigma_{*2}\bar\lambda\bar\sigma_{01}\mu=\tau_{*0}-e_0\equiv 0\bmod I_0,\tilde I_0 $. By multiplying $\sigma'_{1*}$ resp. $\sigma'_{2*}$ from the left we get $\sigma_{-3,1},\sigma_{-2,1},\sigma_{-1,1},\sigma_{-3,2},\sigma_{-2,2},\sigma_{-1,2}\in I$ and $\sigma_{01},\sigma_{02}\in \tilde I$ (note that $\sigma'_{10},\sigma'_{20}\in \overline{J(\Delta)}\mu$ by Lemma \ref{20}). Since this is a contradiction, $\tau\not\in \U_{7}((R,\Delta),(I,\Omega_{max}^I))$. Further $\tau_{*,-3}=e_{-3}$ and thus one can apply Proposition \ref{n4} to $\tau$.
\end{proof}
\begin{Lemma}\label{o2}
Let $H$ be an E-normal subgroup of $\U_{7}(R,\Delta)$ such that $\EU_{7}(I,\Omega^I_{min})\subseteq H$. If $H$ contains a matrix $\sigma$ of the form 
\[\sigma=\left(\begin{array}{ccc|c|ccc} *&*&*&*&*&-&-\\ *&*&*&*&*&-&-\\ *&*&*&*&*&0&0\\\hline *&*&*&*&*&-&-\\\hline *&*&*&*&*&-&-\\ *&*&*&*&*&*&*\\ *&*&*&*&*&*&*\end{array}\right),\]
then $H$ contains an elementary matrix which is not $(I,\Omega^I_{max})$-elementary.
\end{Lemma}
\begin{proof}
Set $\tau:=[\sigma,T_{-1,2}(1)]$. Then 
\begin{align*}
\tau
=&[\sigma,T_{-1,2}(1)]\\
=&(e+\sigma_{*,-1}\sigma'_{2*}-\sigma_{*,-2}\lambda\sigma'_{1*})T_{-1,2}(-1)\\
=&\left(e+\begin{pmatrix}\sigma_{1,-1}\\\sigma_{2,-1}\\0\\\sigma_{0,-1}\\ \sigma_{-3,-1}\\ \sigma_{-2,-1} \\ \sigma_{-1,-1}\end{pmatrix}
\bar\lambda\begin{pmatrix}\bar\sigma_{-1,-2}\lambda&\bar\sigma_{-2,-2}\lambda&\bar\sigma_{-3,-2}\lambda&\bar\sigma_{0,-2}\mu &0&\bar\sigma_{2,-2}&\bar\sigma_{1,-2}\end{pmatrix}\right.
\end{align*}
\begin{align*}
&\left.-\begin{pmatrix}\sigma_{1,-2}\\\sigma_{2,-2}\\0\\\sigma_{0,-2}\\ \sigma_{-3,-2}\\ \sigma_{-2,-2} \\ \sigma_{-1,-2}\end{pmatrix}\lambda\bar\lambda
\begin{pmatrix}\bar\sigma_{-1,-1}\lambda&\bar\sigma_{-2,-1}\lambda&\bar\sigma_{-3,-1}\lambda&\bar\sigma_{0,-1}\mu &0&\bar\sigma_{2,-1}&\bar\sigma_{1,-1}\end{pmatrix}\right)T_{-1,2}(-1).
\end{align*}
Assume that $\tau\in \U_{7}((R,\Delta),(I,\Omega_{max}^I))$. Then $\sigma_{*,-1}\bar\lambda\bar\sigma_{-j,-2}\lambda^{(\epsilon(j)+1)/2}-\sigma_{*,-2}\bar\sigma_{-j,-1}$ $\lambda^{(\epsilon(j)+1)/2}=\tau_{*j}-e_j\equiv 0\bmod I,\tilde I ~\forall j\in\{3,-2,-1\}$ and $\sigma_{*,-1}\bar\lambda\bar\sigma_{0,-2}\mu-\sigma_{*,-2}\bar\sigma_{0,-1}\mu=\tau_{*0}-e_0\equiv 0\bmod I_0,\tilde I_0 $. It follows that $\sigma_{1,-2},\sigma_{2,-2},\sigma_{-3,-2},\sigma_{1,-1},\sigma_{2,-1},\sigma_{-3,-1}\in I$ and $\sigma_{0,-2},\sigma_{0,-1}\in \tilde I$ (multiply $\sigma'_{-1,*}$ resp. $\sigma'_{-2,*}$ from the left). Since that is a contradiction, $\tau\not\in \U_{7}((R,\Delta),(I,\Omega_{max}^I))$. Clearly $\tau_{*,-3}=e_{-3}$ and thus one can apply Proposition \ref{n4} to $\tau$.
\end{proof}
\begin{Lemma}\label{o3}
Let $H$ be an E-normal subgroup of $\U_{7}(R,\Delta)$ such that $\EU_{7}(I,\Omega^I_{min})\subseteq H$. If $H$ contains a matrix $\sigma$ of the form 
\[\sigma=\left(\begin{array}{ccc|c|ccc} 1&0&-&+&+&0&+\\0&1&-&+&+&0&+\\+&+&+&+&+&+&+\\\hline +&+&-&+&+&+&+\\\hline +&+&-&-&+&-&-\\0&0&+&+&+&1&+\\0&0&+&+&+&0&+\end{array}\right),\]
then $H$ contains an elementary matrix which is not $(I,\Omega^I_{max})$-elementary.
\end{Lemma}
\begin{proof}$~$\\
\underline{case 1} Assume that $\sigma_{13}\not\in I$ or $\sigma_{23}\not\in I$.\\
Set $\tau:=[\sigma^{-1},T_{-2,1}(1)]$. Then 
\begin{align*}
\tau
=&[\sigma^{-1},T_{-2,1}(1)]\\
=&(e+\sigma'_{*,-2}\sigma_{1*}-\sigma'_{*,-1}\lambda\sigma_{2*})T_{-2,1}(-1)\\
=&\left(e+\begin{pmatrix}\bar\lambda\bar\sigma_{2,-1}\\0\\\bar\lambda\bar\sigma_{2,-3}\\\sigma'_{0,-2}\\\bar\sigma_{23}\\\bar\sigma_{22}\\\bar\sigma_{21}\end{pmatrix}
\begin{pmatrix}\sigma_{11}&\sigma_{12}&\sigma_{13}&\sigma_{10}&\sigma_{1,-3}&0&\sigma_{1,-1}\end{pmatrix}\right.\\
&\left.-\begin{pmatrix}\bar\lambda\bar\sigma_{1,-1}\\0\\\bar\lambda\bar\sigma_{1,-3}\\\sigma'_{0,-1}\\\bar\sigma_{13}\\\bar\sigma_{12}\\\bar\sigma_{11}\end{pmatrix}\lambda
\begin{pmatrix}\sigma_{21}&\sigma_{22}&\sigma_{23}&\sigma_{20}&\sigma_{2,-3}&0&\sigma_{2,-1}\end{pmatrix}\right)T_{-2,1}(-1).
\end{align*}
Assume that $\tau\in \U_{7}((R,\Delta),(I,\Omega_{max}^I))$. Then $\tau_{*3}-e_3=\sigma'_{*,-2}\sigma_{13}-\sigma'_{*,-1}\lambda\sigma_{23}\equiv 0 \bmod I,\tilde I $. 
It follows that $\sigma_{13},\sigma_{23}\in I$ which is a contradiction. Hence $\tau\not\in \U_{7}((R,\Delta),(I,\Omega_{max}^I))$. Clearly $\tau_{*,-2}=e_{-2}$ and thus one can apply Proposition \ref{n4} to $\tau$.\\
\\
\underline{case 2} Assume that $\sigma_{13},\sigma_{23}\in I$.\\
It follows that $\sigma_{-3,-2},\sigma_{-3,-1}\in I$ (consider $b(\sigma_{*3},\sigma_{*,-2})$ and $b(\sigma_{*3},\sigma_{*,-1})$). Set $\xi:=T_{23}(-\sigma_{23})T_{2,-1}(-\sigma_{2,-1})\in \EU_{7}(I,\Omega_{min}^I)\subseteq H$ and $\tau:=\sigma\xi_3$. Then $\tau$ has the form
\[\tau=\left(\begin{array}{ccc|c|ccc} 1&0&+&+&+&+&+\\0&1&0&+&+&+&0\\+&+&+&+&+&+&+\\\hline +&+&-&+&+&+&+\\\hline +&+&-&-&+&+&+\\0&0&+&+&+&+&+\\0&0&+&+&+&+&+\end{array}\right)\]
Set $\omega :=[\tau,T_{31}(1)]$. Then 
\begin{align*}
\omega
=&[\tau,T_{31}(1)]\\
=&(e+\tau_{*3}\tau'_{1*}-\tau_{*,-1}\tau'_{-3,*})T_{31}(-1)\\
=&\left(e+\begin{pmatrix}\tau_{13}\\0\\\tau_{33}\\\tau_{03}\\\tau_{-3,3}\\\tau_{-2,3}\\\tau_{-1,3}\end{pmatrix}\bar\lambda
\begin{pmatrix}\bar\tau_{-1,-1}\lambda&\bar\tau_{-2,-1}\lambda&\bar\tau_{-3,-1}\lambda&\bar\tau_{0,-1}\mu &\bar\tau_{3,-1}&0&\bar\tau_{1,-1}\end{pmatrix}\right.\\
&\left(-\begin{pmatrix}\tau_{1,-1}\\0\\\tau_{3,-1}\\\tau_{0,-1}\\\tau_{-3,-1}\\\tau_{-2,-1}\\\tau_{-1,-1}\end{pmatrix}
\begin{pmatrix}\bar\tau_{-1,3}\lambda&\bar\tau_{-2,3}\lambda&\bar\tau_{-3,3}\lambda&\bar \tau_{03}\mu &\bar\tau_{33}&0&\bar\tau_{13}\end{pmatrix}\right)T_{31}(-1).
\end{align*}
Assume that $\omega \in \U_{7}((R,\Delta),(I,\Omega_{max}^I))$. Then 
\[\tau_{-3,3}\bar\lambda\bar\tau_{-1,-1}\lambda-\tau_{-3,-1}\bar\tau_{-1,3}\lambda-(\tau_{-3,3}\bar\lambda\bar\tau_{-3,-1}\lambda-\tau_{-3,-1}\bar\tau_{-3,3}\lambda)=\omega _{-3,1}\in I.\]
It follows that $\tau_{-3,3}\in I$ since $\tau_{-1,-1}\equiv 1 \bmod I $ and $\tau_{-3,-1}\in I$. Further 
\[\tau_{03}\bar\lambda\bar\tau_{-1,-1}\lambda-\tau_{0,-1}\bar\tau_{-1,3}\lambda-(\tau_{03}\bar\lambda\bar\tau_{-3,-1}\lambda-\tau_{0,-1}\bar\tau_{-3,3}\lambda)=\omega_{01}\in \tilde I.\]
It follows that $\tau_{03}\in \tilde I$ since $\tau_{-1,-1}\equiv 1 \bmod I $ and $\tau_{-1,3},\tau_{-3,-1},\tau_{-3,3}\in I$. Hence $\tau\in \U_{7}((R,\Delta),(I,\Omega_{max}^I))$ which is a contradiction. Therefore $\omega \not\in \U_{7}((R,\Delta),(I,$ $\Omega_{max}^I))$. Clearly $\omega_{*,-2}=e_{-2}$ and thus one can apply Proposition \ref{n4} to $\omega$.
\end{proof}
\begin{Lemma}\label{o4}
Let $H$ be an E-normal subgroup of $\U_{7}(R,\Delta)$ such that $\EU_{7}(I,\Omega^I_{min})\subseteq H$. If $H$ contains a matrix $\sigma$ of the form 
\[\sigma=\left(\begin{array}{ccc|c|ccc} -&*&*&*&+&+&+\\ -&*&*&*&+&+&+\\ 0&0&+&+&+&0&0\\\hline +&+&*&*&+&+&+\\\hline +&+&*&*&+&+&+\\ +&+&*&*&+&*&*\\ +&+&-&*&+&-&-\end{array}\right),\]
then $H$ contains an elementary matrix which is not $(I,\Omega^I_{max})$-elementary.
\end{Lemma}
\begin{proof}
Set $\tau:=[\sigma^{-1},T_{13}(1)]$. Then 
\begin{align*}
\tau
=&[\sigma^{-1},T_{13}(1)]\\
=&(e+\sigma'_{*1}\sigma_{3*}-\sigma'_{*,-3}\sigma_{-1,*})T_{13}(-1)\\
=&\left(e+\begin{pmatrix}\bar\lambda\bar\sigma_{-1,-1}\lambda\\\bar\lambda\bar\sigma_{-1,-2}\lambda\\ \bar\lambda\bar\sigma_{-1,-3}\lambda\\ \sigma'_{01}\\\bar\sigma_{-1,3}\lambda\\ \bar\sigma_{-1,2}\lambda \\ \bar\sigma_{-1,1}\lambda\end{pmatrix}
\begin{pmatrix}0&0&\sigma_{33}&\sigma_{30}&\sigma_{3,-3}&0&0\end{pmatrix}\right.\\
&\left.-\begin{pmatrix}0\\0\\ \bar\lambda\bar\sigma_{3,-3}\\\sigma'_{0,-3}\\ \bar\sigma_{33}\\0\\0\end{pmatrix}
\begin{pmatrix}\sigma_{-1,1}&\sigma_{-1,2}&\sigma_{-1,3}&\sigma_{-1,0}&\sigma_{-1,-3}&\sigma_{-1,-2}&\sigma_{-1,-1}\end{pmatrix}\right)T_{13}(-1).
\end{align*}
Assume that $\tau\in \U_{7}((R,\Delta),(I,\Omega_{max}^I))$. Then $\bar\lambda\bar\sigma_{-1,-1}\lambda\sigma_{33}-1=\tau_{13}\in I$ and $\bar\lambda\bar\sigma_{-1,-2}\lambda\sigma_{33}=\tau_{23}\in I$. Since $\sigma_{33}\equiv 1 \bmod I $, it follows that $\sigma_{-1,-1}-1, \sigma_{-1,-2}\in I$. Since by assumption $\tau\in \U_{7}((R,\Delta),(I,\Omega_{max}^I))$,
\begin{align*}
\tau_{*3}-e_3=\sigma'_{*1}\sigma_{33}-\sigma'_{*,-3}\sigma_{-1,3}-\begin{pmatrix}1&0&-\bar\lambda\bar\sigma_{3,-3}\sigma_{-1,1}&-\sigma'_{0,-3}\sigma_{-1,1}&-\bar\sigma_{33}\sigma_{-1,1} &0&0\end{pmatrix}^t
\end{align*}
is congruent to $0$ modulo $I,\tilde I$. By multiplying $\sigma_{-3,*}$ from the left we get that $\sigma_{-1,3}\in I$ since $\sigma_{-3,1}, \sigma_{-1,1}\in I$. Further $\bar\sigma_{-1,3}\lambda\sigma_{30}-\bar\sigma_{33}\sigma_{-1,0}=\tau_{-3,0}\in I_0$. Since $\sigma_{-1,3}\in I$ and $\sigma_{33}\equiv 1\bmod I $, it follows that $\sigma_{-1,0}\in I_0$ (note that $I\subseteq I_0$). Hence $\sigma_{-1,*}\equiv e_{-1}^t\bmod I,I_0 $ and therefore $\sigma_{*1}\equiv e_1\bmod I,\tilde I $ (follows from Lemma \ref{20}). This implies $\sigma_{11}-1,\sigma_{21}\in I$. Since that is a contradiction, $\tau\not\in \U_{7}((R,\Delta),(I,\Omega_{max}^I))$. One checks easily that $\tau$ has the form 
\[\tau=\left(\begin{array}{ccc|c|ccc} 1&0&-&+&+&0&+\\0&1&-&+&+&0&+\\+&+&+&+&+&+&+\\\hline +&+&-&+&+&+&+\\\hline +&+&-&-&+&-&-\\0&0&+&+&+&1&+\\0&0&+&+&+&0&+\end{array}\right)\]
and thus one can apply the previous lemma to $\tau$.
\end{proof}
\begin{Lemma}\label{o5}
Let $H$ be an E-normal subgroup of $\U_{7}(R,\Delta)$ such that $\EU_{7}(I,\Omega^I_{min})\subseteq H$. If $H$ contains a matrix $\sigma$ of the form 
\[\sigma=\left(\begin{array}{ccc|c|ccc} +&-&-&*&+&+&+\\ +&-&-&*&+&+&+\\ 0&0&+&+&+&0&0\\\hline +&+&*&*&+&+&+\\\hline +&+&*&*&+&+&+\\ +&+&-&*&+&-&-\\ +&+&+&*&+&+&+\end{array}\right),\]
then $H$ contains an elementary matrix which is not $(I,\Omega^I_{max})$-elementary.
\end{Lemma}
\begin{proof}
Set $\tau:=[\sigma,T_{21}(1)]$. Then 
\begin{align*}
\tau
=&[\sigma,T_{21}(1)]\\
=&(e+\sigma_{*2}\sigma'_{1*}-\sigma_{*,-1}\sigma'_{-2,*})T_{21}(-1)\\
=&\left(e+\begin{pmatrix}\sigma_{12}\\\sigma_{22}\\0\\\sigma_{02}\\ \sigma_{-3,2}\\ \sigma_{-2,2} \\ \sigma_{-1,2}\end{pmatrix}\bar\lambda
\begin{pmatrix}\bar\sigma_{-1,-1}\lambda&\bar\sigma_{-2,-1}\lambda&\bar\sigma_{-3,-1}\lambda&\bar\sigma_{0,-1}\mu &0&\bar\sigma_{2,-1}&\bar\sigma_{1,-1}\end{pmatrix}\right.\\
&\left.-\begin{pmatrix}\sigma_{1,-1}\\\sigma_{2,-1}\\0\\ \sigma_{0,-1}\\\sigma_{-3,-1}\\ \sigma_{-2,-1} \\ \sigma_{-1,-1}\end{pmatrix}
\begin{pmatrix}\bar\sigma_{-1,2}\lambda&\bar\sigma_{-2,2}\lambda&\bar\sigma_{-3,2}\lambda&\bar\sigma_{02}\mu &0&\bar\sigma_{22}&\bar\sigma_{12}\end{pmatrix}\right)T_{21}(-1).
\end{align*}
Assume that $\tau\in \U_{7}((R,\Delta),(I,\Omega_{max}^I))$. Then 
$\sigma_{*2}\bar\lambda\bar\sigma_{1,-1}-\sigma_{*,-1}\bar\sigma_{12}=\tau_{*,-1}-e_{-1}\equiv 0\bmod I,\tilde I $. It follows that $\sigma_{12}\in I$. Further $\sigma_{*2}\bar\lambda\bar\sigma_{2,-1}-\sigma_{*,-1}\bar\sigma_{22}+e_{-1}\equiv\tau_{*,-2}-e_{-2}\equiv 0\bmod I,\tilde I $. By multiplying $\sigma'_{-1,*}$ from the left we get that $-\bar\sigma_{22}+\bar\sigma_{11}\in I$ which implies $\sigma_{22}\equiv 1 \bmod I $ since $\sigma_{11}\equiv 1\bmod I $. Hence $\sigma_{*2}\equiv e_2\bmod I,\tilde I $ and therefore $\sigma_{-2,*}\equiv e_{-2}^t\bmod I,I_0$. This implies $\sigma_{*,-2}\equiv e_{-2}\bmod I,\tilde I $ and $\sigma_{*,-1}\equiv e_{-1}\bmod I,\tilde I $ and therefore $\sigma_{2*}\equiv e_{2}^t\bmod I,I_0 $ and $\sigma_{1*}\equiv e_{1}^t\bmod I,I_0 $. 
Thus $\sigma_{-2,3},\sigma_{-2,-2}-1,\sigma_{-2,-1},\sigma_{13},\sigma_{23}\in I$. Since that is a contradiction, $\tau\not\in \U_{7}((R,\Delta),(I,$ $\Omega_{max}^I))$. Clearly $\tau_{*,-3}=e_{-3}$ and thus one can apply Proposition \ref{n4} to $\tau$.
\end{proof}
\begin{Lemma}\label{o6}
Let $H$ be an E-normal subgroup of $\U_{7}(R,\Delta)$ such that $\EU_{7}(I,\Omega^I_{min})\subseteq H$. If $H$ contains a matrix $\sigma$ of the form 
\[\sigma=\left(\begin{array}{ccc|c|ccc} +&+&+&*&+&+&+\\ +&+&+&*&+&+&+\\ 0&0&+&+&+&0&0\\\hline +&+&-&*&+&+&+\\\hline +&+&-&*&+&+&+\\ +&+&+&*&+&+&+\\ +&+&+&*&+&+&+\end{array}\right)\]
such that $\sigma_{33}\equiv 1\bmod \mathrm{rad}(R)$ where $\mathrm{rad}(R)$ denotes the Jacobson radical of $R$, then $H$ contains an elementary matrix which is not $(I,\Omega^I_{max})$-elementary.
\end{Lemma}
\begin{proof}
By \cite[Chapter III, Lemma 4.3]{bak_2} there are $y_1, y_2\in I$ such that $\sigma_{-1,3}+y_2(y_1\sigma_{-1,3}+\sigma_{-2,3})\in \mathrm{rad} (R)$. Set $\xi:=T_{-1,-2}(y_2)T_{-2,-1}(y_1)\in \EU_{7}(I,\Omega^I_{min})\subseteq H$ and $\tau:=\xi\sigma$. Then $\tau\equiv\sigma \bmod I,\tilde I,I_0,\tilde I_0 $, $\tau_{33}=\sigma_{33}\equiv 1 \bmod \mathrm{rad} (R)$ and $\tau_{-1,3}=\sigma_{-1,3}+y_2(y_1\sigma_{-1,3}+\sigma_{-2,3})\in \mathrm{rad} (R)$. Set $\omega:={}^{T_{3,-1}(-1)}\tau$. Then $\omega_{3*}\equiv e_3^t\bmod I,I_0 $, $\omega_{33}\equiv 1 \bmod \mathrm{rad}(R)$,  $\omega_{0,-1}=\tau_{0,-1}+\tau_{03}\equiv \tau_{03}\equiv\sigma_{03}\bmod \tilde I$ and $\omega_{-3,-1}=\tau_{-3,-1}+\tau_{-3,3}\equiv \tau_{-3,3}\equiv\sigma_{-3,3}\bmod I $. Hence $\omega_{0,-1}\not\in \tilde I\lor\omega_{-3,-1}\not\in I$. By Nakayama's lemma $\omega_{33}$ is invertible. Set $\zeta:=T_{3,-1}(-(\omega_{33})^{-1}\omega_{3,-1})T_{3,-2}(-(\omega_{33})^{-1}\omega_{3,-2})\in \EU_{7}(I,\Omega^I_{min})\subseteq H$ and $\psi:=\omega\zeta$. Then $\psi$ has the form
\[\psi=\left(\begin{array}{ccc|c|ccc} *&*&*&*&*&*&*\\ *&*&*&*&*&*&*\\ *&*&*&*&*&0&0\\\hline *&*&*&*&*&*&-\\\hline *&*&*&*&*&*&-\\ *&*&*&*&*&*&*\\ *&*&*&*&*&*&*\end{array}\right).\]
Thus one can apply Lemma \ref{o2} to $\psi$.
\end{proof}
\begin{Lemma}\label{o7}
Let $H$ be an E-normal subgroup of $\U_{7}(R,\Delta)$ such that $\EU_{7}(I,\Omega^I_{min})\subseteq H$. If $H$ contains a matrix $\sigma$ of the form 
\[\sigma=\left(\begin{array}{ccc|c|ccc} +&+&+&*&+&+&+\\ +&+&+&*&+&+&+\\ 0&0&+&+&+&0&0\\\hline +&+&+&-&+&+&+\\\hline +&+&+&*&+&+&+\\ +&+&+&*&+&+&+\\ +&+&+&*&+&+&+\end{array}\right)\]
such that $\sigma_{33}$ is invertible, then $H$ contains an elementary matrix which is not $(I,\Omega^I_{max})$-elementary.
\end{Lemma}
\begin{proof}
Since $\sigma_{00}\not\equiv 1\bmod \tilde I_0$, there is an $a\in J(\Delta)$ such that $(\sigma_{00}-1) a\not\in \tilde I$. Choose a $b\in R$ such that $(a,b)\in \Delta$ and set $\tau:={}^{T_{1}(a,\underline{b})}\sigma$. Then $\tau_{3,-2}=0$, $\tau_{33}=\sigma_{33}$, $\tau_{3,-1}\in I$ and $\tau_{0,-1}\not\in \tilde I$. Set $\xi:=T_{3,-1}(-(\tau_{33})^{-1}\tau_{3,-1})\in \EU_{7}(I,\Omega^I_{min})$ and $\omega:=\tau\xi$. Then $\omega$ has the form
\[\omega=\left(\begin{array}{ccc|c|ccc} *&*&*&*&*&*&*\\ *&*&*&*&*&*&*\\ *&*&*&*&*&0&0\\\hline *&*&*&*&*&*&-\\\hline *&*&*&*&*&*&*\\ *&*&*&*&*&*&*\\ *&*&*&*&*&*&*\end{array}\right).\]
Thus one can apply Lemma \ref{o2} to $\omega$.\\
\\
\end{proof}
\begin{Corollary}\label{o8}
Let $H$ be an E-normal subgroup of $\U_{7}(R,\Delta)$ such that $\EU_{7}(I,\Omega^I_{min})\subseteq H$. If $H$ contains a matrix $\sigma\not\in \U_{7}((R,\Delta),(I,\Omega_{max}^I))$ of the form 
\[\sigma=\left(\begin{array}{ccc|c|ccc} *&*&*&*&+&*&*\\ *&*&*&*&+&*&*\\ 0&0&+&+&+&0&0\\\hline *&*&*&*&+&*&*\\\hline *&*&*&*&+&*&*\\ *&*&*&*&+&*&*\\ *&*&*&*&+&*&*\end{array}\right)\]
such that $\sigma_{33}\equiv 1\bmod \mathrm{rad}(R)$, then $H$ contains an elementary matrix which is not $(I,\Omega^I_{max})$-elementary.
\end{Corollary}
\begin{proof}
Follows from the Lemmas \ref{o1}, \ref{o2}, \ref{o4}, \ref{o5}, \ref{o6} and \ref{o7}.
\end{proof}
\begin{Lemma}\label{o9}
Let $H$ be an E-normal subgroup of $\U_{7}(R,\Delta)$ such that $\EU_{7}(I,\Omega^I_{min})\subseteq H$. If $H$ contains a matrix $\sigma$ of the form 
\[\sigma=\left(\begin{array}{ccc|c|ccc} *&*&*&*&*&*&*\\ *&*&*&*&*&*&*\\ 0&0&1&0&-&-&*\\\hline +&+&*&*&*&*&*\\\hline +&+&*&*&*&*&*\\ +&+&*&*&*&*&*\\ +&+&*&*&*&*&*\end{array}\right),\]
then $H$ contains an elementary matrix which is not $(I,\Omega^I_{max})$-elementary.
\end{Lemma}
\begin{proof}
By \cite[chapter III, Lemma 4.3]{bak_2} there are $x_1, x_2\in I$ such that $\sigma_{-1,1}+x_2(x_1\sigma_{-1,1}+\sigma_{-2,1})\in \mathrm{rad} (R)$. Set $\xi:=T_{-1,-2}(x_2)T_{-2,-1}(x_1)\in \EU_{7}(I,\Omega^I_{min})\subseteq H$. Then $\tau:=\xi\sigma$ has the form
\[\tau=\left(\begin{array}{ccc|c|ccc} *&*&*&*&*&*&*\\ *&*&*&*&*&*&*\\ 0&0&1&0&-&-&*\\\hline +&+&*&*&*&*&*\\\hline +&+&*&*&*&*&*\\ +&+&*&*&*&*&*\\ +&+&*&*&*&*&*\end{array}\right)\]
and $\tau_{-1,1}=\sigma_{-1,1}+x_2(x_1\sigma_{-1,1}+\sigma_{-2,1})\in \mathrm{rad} (R)$. Set $\omega:=[\tau^{-1},T_{13}(1)]$. Then
\begin{align*}
\omega
=&[\tau^{-1},T_{13}(1)]\\
=&(e+\tau'_{*1}\tau_{3*}-\tau'_{*,-3}\tau_{-1,*})T_{13}(-1)\\
=&\left(e+\begin{pmatrix}\bar\lambda\bar\tau_{-1,-1}\lambda\\\bar\lambda\bar\tau_{-1,-2}\lambda\\\bar\lambda\tau_{-1,-3}\lambda\\\tau'_{01}\\\bar\tau_{-1,3}\lambda\\\bar\tau_{-1,2}\lambda\\ \bar\tau_{-1,1}\lambda\end{pmatrix}
\begin{pmatrix}0&0&1&0&\tau_{3,-3}&\tau_{3,-2}&\tau_{3,-1}\end{pmatrix}\right.\\
&\left.-\begin{pmatrix}\bar\lambda\bar\tau_{3,-1}\\\bar\lambda\bar\tau_{3,-2}\\\bar\lambda\bar\tau_{3,-3}\\\tau'_{0,-3}\\1\\0\\0\end{pmatrix}
\begin{pmatrix}\tau_{-1,1}&\tau_{-1,2}&\tau_{-1,3}&\tau_{-1,0}&\tau_{-1,-3}&\tau_{-1,-2}&\tau_{-1,-1}\end{pmatrix}\right)T_{13}(-1).
\end{align*}
Assume that $\omega\in \U_{7}((R,\Delta),(I,\Omega_{max}^I))$. Then $\tau'_{*1}\tau_{3j}-\tau'_{*,-3}\tau_{-1,j}=\omega_{*j}-e_j\equiv 0\bmod I,\tilde I ~\forall j\in\{-3,-2\}$.
By multiplying $\tau_{1*}$ from the left we get that $\tau_{3,-3},\tau_{3,-2}\in I$. Since that is a contradiction, $\omega\not\in \U_{7}((R,\Delta),(I,\Omega_{max}^I))$. 
Obviously $\omega_{*1}\equiv e_1\bmod I,\tilde I $, $\omega_{-1,*}\equiv e^t_{-1}\bmod I,I_0 $ and $\omega_{-1,-1}\equiv 1 \bmod \mathrm{rad}(R)$ (note that $\mathrm{rad}(R)$ is involution invariant since $\bar~$ defines a bijection between maximal left and maximal right ideals of $R$). Set $\psi:={}^{P_{3,-1}}\omega$. Then $\psi\not\in \U_{7}((R,\Delta),(I,\Omega_{max}^I))$. Further $\psi_{*,-3}\equiv e_{-3}\bmod I,\tilde I $, $\psi_{3*}\equiv e^t_3\bmod I,I_0 $ and $\psi_{33}\equiv 1 \bmod \mathrm{rad}(R)$. By Nakayama's lemma $\psi_{33}$ is invertible. Set $\zeta:=T_{32}(-(\psi_{33})^{-1}\psi_{32})T_{31}(-(\psi_{33})^{-1}\psi_{31})$ $T_{3,-1}(-(\psi_{33})^{-1}\psi_{3,-1})T_{3,-2}(-(\psi_{33})^{-1}\psi_{3,-2})\subseteq \EU_{7}(I,\Omega^I_{min})\subseteq H$ and $\eta:=\psi\zeta$. Then $\eta$ has the form
\[\eta=\left(\begin{array}{ccc|c|ccc} *&*&*&*&+&*&*\\ *&*&*&*&+&*&*\\ 0&0&+&+&+&0&0\\\hline *&*&*&*&+&*&*\\\hline *&*&*&*&+&*&*\\ *&*&*&*&+&*&*\\ *&*&*&*&+&*&*\end{array}\right)\]
and further $\eta_{33}\equiv 1 \bmod \mathrm{rad}(R)$. Since $\psi\not\in \U_{7}((R,\Delta),(I,\Omega_{max}^I))$, we have $\eta\not\in \U_{7}((R,\Delta),(I,\Omega_{max}^I))$. Thus we can apply Corollary \ref{o8} to $\eta$.
\end{proof}
\begin{Proposition}\label{p2}
Suppose that $R$ is semilocal. Let $H$ be an E-normal subgroup of $\U_{7}(R,\Delta)$ such that $\EU_{7}(I,\Omega^I_{min})\subseteq H$. If $H\not\subseteq \CU_{7}((R,\Delta),(I,\Omega_{max}^I))$, then $H$ contains an elementary matrix which is not $(I,\Omega^I_{max})$-elementary. 
\end{Proposition}
\begin{proof}
By Proposition \ref{123}, $H$ contains a matrix $\sigma\not\in \U_{7}((R,\Delta),(I,\Omega^{I}_{max}))$ of the form
\[\sigma=\left(\begin{array}{ccc|c|ccc} *&*&*&*&*&*&*\\ *&*&*&*&*&*&*\\0&0&1&0&*&*&*\\\hline *&*&*&*&*&*&*\\\hline *&*&*&*&*&*&*\\ *&*&*&*&*&*&*\\ *&*&*&*&*&*&*\end{array}\right).\]
By Lemma \ref{o1} and the previous lemma we may assume that 
\[\sigma=\left(\begin{array}{ccc|c|ccc} *&*&*&*&*&*&*\\ *&*&*&*&*&*&*\\0&0&1&0&+&+&*\\\hline +&+&*&*&*&*&*\\\hline +&+&*&*&*&*&*\\ +&+&*&*&*&*&*\\ +&+&*&*&*&*&*\end{array}\right).\]
\underline{case 1} Assume that $\sigma_{3,-1}\not\in I$.\\
Set $\tau:={}^{T_{21}(1)}\sigma$. Clearly $\tau_{01},\tau_{02}\in \tilde I$ and $\tau_{-3,1},\tau_{-2,1},\tau_{-1,1},\tau_{-3,2},\tau_{-2,2},$ $\tau_{-1,2}\in I$. Further \[\tau_{3*}=\begin{pmatrix}0&0&1&0&\sigma_{3,-3}&\sigma_{3,-2}+\sigma_{3,-1}&\sigma_{3,-1}\end{pmatrix}.\]
Since $\sigma_{3,-2}\in I$ and $\sigma_{3,-1}\not\in I$, we have $\tau_{3,-2}=\sigma_{3,-2}+\sigma_{3,-1}\not\in I$. Thus one can apply the previous lemma to $\tau$.\\
\\
\underline{case 2} Assume that $\sigma_{3,-1}\in I$.\\
Then $\sigma_{3*}\equiv e_3^t\bmod I,I_0 $ and hence $\sigma_{*,-3}\equiv e_{-3}\bmod I,\tilde I $. Set $\xi:=T_{3,-1}(-\sigma_{3,-1})T_{3,-2}(-\sigma_{3,-2})\subseteq \EU_{7}(I,\Omega^I_{min})\subseteq H$ and $\tau:=\sigma\xi$. Then $\tau$ has the form
\[\tau=\left(\begin{array}{ccc|c|ccc} *&*&*&*&+&*&*\\ *&*&*&*&+&*&*\\ 0&0&1&0&+&0&0\\\hline *&*&*&*&+&*&*\\\hline *&*&*&*&+&*&*\\ *&*&*&*&+&*&*\\ *&*&*&*&+&*&*\end{array}\right).\]
Since $\sigma\not\in \U_{7}((R,\Delta),(I,\Omega_{max}^I))$, we have $\tau\not\in \U_{7}((R,\Delta),(I,\Omega_{max}^I))$. Thus we can apply Corollary \ref{o8} to $\tau$.
\end{proof}
\subsection{General case}
In this subsection $(I,\Omega)$ denotes an odd form ideal of $(R,\Delta)$.
\begin{Lemma}\label{71}
Let $\sigma\in \U_{2n+1}(R,\Delta)$. Further let $i,j\in \Theta_{hb}$ such that $i\neq \pm j$, $x\in R$ and $(y,z)\in \Delta^{-\epsilon(i)}$. Set $\tau:=[\sigma,T_{ij}(x)]$ and $\rho:=[\sigma,T_{i}(y,z)]$. Further let $J(\sigma)$ denote the ideal generated by $\{\sigma_{kl},\sigma'_{kl}|k,l\in \Theta_{hb}, k\neq l\}\cup\{\bar a\mu\sigma_{0l},\bar a\mu\sigma'_{0l}|a\in J(\Delta), l\in \Theta_{hb}\}$ and $J'(\sigma)$ the left ideal generated by $\{\sigma_{k0},\sigma'_{k0}|k\in\Theta_{hb}\}$. Then 
\[q(\tau_{*k})=(\delta_{0k},0)\+ q(\sigma_{*i})\circ x\sigma'_{jk}\+ q(\sigma_{*,-j})\circ\tilde x\sigma'_{-i,k}\+(0,y_k-\bar y_k\lambda),\]
if $k\neq j,-i$,
\[q(\tau_{*j})=q(\sigma_{*i})\circ x\sigma'_{jj}\+ q(\sigma_{*,-j})\circ\tilde x \sigma'_{-i,j}\+ q(\tau_{*i})\circ(-x)\+(0,y_j-\bar y_j\lambda)\]
and
\[q(\tau_{*,-i})=q(\sigma_{*i})\circ x\sigma'_{j,-i}\+ q(\sigma_{*,-j})\circ\tilde x \sigma'_{-i,-i}\+ q(\tau_{*,-j})\circ(-\tilde x)\+ (0,y_{-i}-\bar y_{-i}\lambda)\]
where $\tilde x=-\lambda^{(\epsilon(j)-1)/2}\bar x\lambda^{(1-\epsilon(i)/2}$, $y_k\in J(\sigma)~\forall k\in\Theta_{hb}$ and $y_0\in J'(\sigma)$.\\
Further 
\begin{align*}
q(\rho_{*k})=&(q(\sigma_{*0})\minus(1,0))\circ y\sigma'_{-i,k}\+ q(\sigma_{*i})\circ \hat y\sigma'_{0k}\+ q(\sigma_{*i})\circ z\sigma'_{-i,k}\\
&\+ a_k\circ \sigma'_{-i,k}\+(0,z_k-\bar z_k\lambda),
\end{align*}
if $k\neq 0,-i$,
\begin{align*}q(\rho_{*0})=&(1,0)\+ (q(\sigma_{*0})\minus(1,0))\circ y\sigma'_{-i,0}\+ q(\sigma_{*i})\circ \hat y\sigma'_{00}\+ q(\sigma_{*i})\circ z \sigma'_{-i,0}\\
&\+ q(\rho_{*i})\circ(-\hat y)\+ a_0\circ \sigma'_{-i,0}\+(0,z_0-\bar z_0\lambda)
\end{align*}
and
\begin{align*}
q(\rho_{*,-i})=&(q(\sigma_{*0})\minus(1,0))\circ y\sigma'_{-i,-i}\+ q(\sigma_{*i})\circ\hat y \sigma'_{0,-i}\+ q(\sigma_{*i})\circ z\sigma'_{-i,-i}\\
&\+(q(\sigma_{*0})\minus(1,0))\circ(-y\sigma'_{-i,0}y)\+ q(\sigma_{*i})\circ(-\hat y\sigma'_{00}y)\+ q(\sigma_{*i})\circ(-z\sigma'_{-i,0}y)\\&\+ q(\rho_{*i})\circ \hat z\+ a_{-k}\circ (\sigma'_{-i,-i}-1)\+ b\+ c\circ (-\sigma'_{-i,0}y)\+ d \+(0,z_{-i}-\bar z_{-i}\lambda)\end{align*}
where $\hat y=-\lambda^{-(1+\epsilon(i))/2}\bar y\mu $, $\hat z=\lambda^{-(1+\epsilon(i))/2}\bar z\lambda^{(1-\epsilon(i))/2}$,
\begin{align*}
a_k=\begin{cases} (y,\lambda^{(\epsilon(i)+1)/2}z), &\mbox{if } k\in\Theta\setminus\Theta_-, \\ 
(y,\bar z\lambda^{(1-\epsilon(i))/2}),& \mbox{if } k\in\Theta_-, \end{cases}
\end{align*}
\begin{align*}
b=\begin{cases} (0,\bar z(\sigma'_{-1,-1}-1)-\overline{\bar z(\sigma'_{-1,-1}-1)}\lambda), &\mbox{if } i\in\Theta_+, \\ 
(0,z(\sigma'_{-1,-1}-1)-\overline{z(\sigma'_{-1,-1}-1)}\lambda),& \mbox{if } i\in\Theta_-, \end{cases}
\end{align*}
\begin{align*}
c=\begin{cases} (y,\lambda z), &\mbox{if } i\in\Theta_+, \\ 
(y,z),& \mbox{if } i\in\Theta_-, \end{cases}
\end{align*}
\begin{align*}
d=\begin{cases} (0,\bar\sigma'_{-i,-i}\bar y \bar \sigma_{00}\mu y-\overline{\bar\sigma'_{-i,-i}\bar y \bar \sigma_{00}\mu y}\lambda), &\mbox{if } i\in\Theta_+, \\ 
0,& \mbox{if } i\in\Theta_-, \end{cases}
\end{align*}
$z_k\in J(\sigma)~\forall k\in \Theta_{hb}$ and $z_0\in J(\sigma)+J'(\sigma)$.
\end{Lemma}
\begin{proof}
Straightforward computation.
\end{proof}
\begin{Lemma}\label{q1}
Let $H$ be an E-normal subgroup of $\U_{2n+1}(R,\Delta)$ such that $\EU_{2n+1}(I,\Omega)\subseteq H$. If $H$ contains a matrix $\sigma\in \U_{2n+1}((R,\Delta),(I,\Omega^{I}_{max}))\setminus \U_{2n+1}((R,\Delta),(I,\Omega))$ of the form
\[\sigma=\begin{pmatrix} 1&0&0\\0&A&0\\0&0&1\end{pmatrix}.\]
where $A\in \M_{2n-1}(R)$, then $H$ contains an elementary matrix which is not $(I,\Omega)$-elementary.
\end{Lemma}
\begin{proof}
By Lemma \ref{39}, there is a $j\in \{2,\dots,-2\}\setminus\{0\}$ such that $q(\sigma_{*j})\not\in\Omega$ or there is a $x\in J(\Delta)$ such that $(q(\sigma_{*0})\minus(1,0))\circ x\not\in \Omega$.\\
\\
\underline{case 1} Assume that there is a $j\in \{2,\dots,-2\}\setminus\{0\}$ such that $q(\sigma_{*j})\not\in\Omega$.\\
We only consider the subcase that $\epsilon(j)=1$ and leave the subcase $\epsilon(j)=-1$ to the reader. Set $\tau:=[\sigma,T_{j1}(1)]$. It is easy to show that 
\[\tau=T_{-1}(q(\sigma_{*j})\+(0,-\sigma_{-j,j}+\bar\sigma_{-j,j}\lambda))\prod\limits_{\substack{i=2\\i\neq 0}}^{-2}T_{i1}(\sigma_{ij}-\delta_{ij}).\]
Since $\prod\limits_{\substack{i=2\\i\neq 0}}^{-2}T_{i1}(\sigma_{ij}-\delta_{ij})\in \EU_{2n+1}(I,\Omega)\subseteq H$, it follows that $T_{-1}(q(\sigma_{*j})\+(0,-\sigma_{-j,j}+\bar\sigma_{-j,j}\lambda))\in H$. Clearly $T_{-1}(q(\sigma_{*j})\+(0,-\sigma_{-j,j}+\bar\sigma_{-j,j}\lambda))$ is not $(I,\Omega)$-elementary since $q(\sigma_{*j})\not\in\Omega$ and $(0,-\sigma_{-j,j}+\bar\sigma_{-j,j}\lambda))\in\Omega_{min}^I\subseteq \Omega$. \\
\\
\underline{case 2} Assume there is a $x\in J(\Delta)$ such that $(q(\sigma_{*0})\minus(1,0))\circ x\not\in \Omega$.\\
Choose a $y\in R$ such that $(x,y)\in \Delta$ and set $\tau:=[\sigma,T_{-1}(x,y)]$. It is easy to show that 
\[\tau=T_{-1}((q(\sigma_{*0})\minus(1,0))\circ x)\prod\limits_{\substack{i=2\\i\neq 0}}^{-2}T_{i1}(\sigma_{i0}x).\]
Since $\prod\limits_{\substack{i=2\\i\neq 0}}^{-2}T_{i1}(\sigma_{i0}x)\in \EU_{2n+1}(I,\Omega)\subseteq H$, it follows that $T_{-1}((q(\sigma_{*0})\minus(1,0))\circ x)\in H$. Clearly $T_{-1}((q(\sigma_{*0})\minus(1,0))\circ x)\in H$ is not $(I,\Omega)$-elementary since $(q(\sigma_{*0})\minus(1,0))\circ x\not\in \Omega$.
\end{proof} 
\begin{Lemma}\label{q2}
Let $H$ be an E-normal subgroup of $\U_{2n+1}(R,\Delta)$ such that $\EU_{2n+1}(I,\Omega)\subseteq H$. If $H$ contains a matrix $\sigma\in \U_{2n+1}((R,\Delta),(I,\Omega^{I}_{max}))\setminus \U_{2n+1}((R,\Delta),(I,\Omega))$ such that $\sigma_{11}=1$, then $H$ contains an elementary matrix which is not $(I,\Omega)$-elementary.
\end{Lemma}
\begin{proof}$~$\\
\underline{case 1} Assume that $q(\sigma_{*1}),q(\sigma_{*,-1})\in\Omega$.\\
Set 
\[\xi:=T_{1}(\minus_{-1} q^{-1}(\sigma_{*,-1}))\prod\limits_{\substack{i=2\\i\neq 0}}^{-2}T_{i,-1}(-\sigma_{i,-1})T_{-1}(\minus q(\sigma_{*1}))\prod\limits_{\substack{i=-2\\i\neq 0}}^2T_{i1}(-\sigma_{i1})\in \EU_{2n+1}(I,\Omega)\subseteq H\]
where $q^{-1}($ $\sigma_{*,-1})=(q_1(\sigma_{*,-1}),\underline{q_2(\sigma_{*,-1})})\in \Omega^{-1}$. Further set $\tau:=\xi\sigma$. Then there is an $A\in \M_{2n-1}(R)$ such that 
\[\tau=\begin{pmatrix} 1&0&0\\0&A&0\\0&0&1\end{pmatrix}.\]
Clearly $\tau\in \U_{2n+1}((R,\Delta),(I,\Omega^{I}_{max}))\setminus \U_{2n+1}((R,\Delta),(I,\Omega))$ and thus we can apply the previous lemma to $\tau$. \\
\\
\underline{case 2} Assume that $q(\sigma_{*1})\not\in\Omega$ or $q(\sigma_{*,-1})\not\in\Omega$.\\
We consider only the subcase $q(\sigma_{*1})\not\in\Omega$ and leave the subcase $q(\sigma_{*,-1})\not\in\Omega$ to the reader. Set $\xi:=T_{13}(-\sigma_{13})T_{1,-2}(-\sigma_{1,-2})\in \EU_{2n+1}(I,\Omega)\subseteq H$ and $\tau:=\sigma\xi$. Then 
clearly $\tau_{11}=1$, $\tau_{13}=\tau_{1,-2}=0$, $\tau\in \U_{2n+1}((R,\Delta),(I,\Omega^{I}_{max}))$ and $q(\tau_{*1})\not\in \Omega$.\\
\\
\underline{case 2.1} Assume that $q(\tau_{*,-2})\in\Omega$.\\ 
Set $\zeta:=\prod\limits_{\substack{i=-2\\i\neq 0}}^2T_{i1}(-\tau_{i1})\in \EU_{2n+1}(I,\Omega)\subseteq H$ and $\omega:=\zeta\tau$. Further set $\chi:=T_{-1}(\minus q(\tau_{*1}))\in \EU_{2n+1}((R,\Delta),$ $(I,\Omega_{max}^I))$. Then $\psi:=\chi\omega$ has the form \[\psi=\begin{pmatrix}1&*&*\\0&A&*\\0&0&1\end{pmatrix}\]
where $A\in \M_{2n-1}(R)$. One checks easily that $\psi\in \U_{2n+1}((R,\Delta),(I,\Omega^{I}_{max}))$ and $q(\psi_{*,-2})\in\Omega$. Set $g:=T_{12}(-1)$. Using the equality $[\alpha\beta,\gamma]=$ $^{\alpha}[\beta,\gamma][\alpha,\gamma]$ one gets that $[\omega,g]=[\chi^{-1}\psi,g]={}^{\chi^{-1}}[\psi,g][\chi^{-1},g]$. It is easy to show that $[\psi,g]\in \EU_{2n+1}(I,\Omega)$ and hence $^{\chi^{-1}}[\psi,g]\in \EU_{2n+1}((R,\Delta),(I,\Omega))\subseteq H$. On the other hand $[\chi^{-1},g]=T_{-1,2}(-q_2(\tau_{*1}))T_{-2}(\minus q(\tau_{*1}))$ by (S1), (SE1) and (SE2). Since $T_{-1,2}(-q_2(\tau_{*1}))\in H$, it follows that $T_{-2}(\minus q(\tau_{*1}))\in H$. Clearly $T_{-2}(\minus q(\tau_{*1}))$ is not $(I,\Omega)$-elementary since $q(\tau_{*1})\not\in\Omega$.\\
\\
\underline{case 2.2} Assume that $q(\tau_{*,-2})\not\in\Omega$.\\
Set $\omega:=[\tau,T_{-2,-3}(1)]$. Then clearly $\omega_{11}=1$ and $\omega\in \U_{2n+1}((R,\Delta),(I,\Omega_{max}^I))$. Further, by Lemma \ref{71}, $q(\omega_{*,-3})\equiv q(\tau_{*,-2})\bmod \Omega$ (which implies $\omega\not\in \U_{2n+1}((R,\Delta),(I,\Omega))$ since $q(\tau_{*,-2})\not\in\Omega$) and $q(\omega_{*1}),$ $q(\omega_{*,-1})\in\Omega$. Thus one can apply case 1 to $\omega$.
\end{proof}
\begin{Theorem}[Extraction Theorem]\label{extraction}
Suppose $R$ is semilocal. Let $H$ be an E-normal subgroup of $\U_{2n+1}(R,\Delta)$ such that $\EU_{2n+1}(I,$ $\Omega)\subseteq H$. If $H\not\subseteq \CU_{2n+1}((R,\Delta),(I,\Omega))$, then $H$ contains an elementary matrix which is not $(I,\Omega)$-elementary.
\end{Theorem}
\begin{proof}
By Proposition \ref{p1} and Proposition \ref{p2} we may assume that $H\subseteq \CU_{2n+1}((R,\Delta),(I,\Omega_{max}^I))$. Choose an $h\in H\setminus \CU_{2n+1}((R,\Delta),(I,\Omega))$. Then, by the definition of $\CU_{2n+1}((R,\Delta),(I,\Omega))$, there is a $g\in \EU_{2n+1}(R,\Delta)$ such that $\sigma:=[h,g]\not\in \U_{2n+1}((R,\Delta),(I,\Omega))$. Since $h\in H\subseteq \CU_{2n+1}((R,\Delta),(I,\Omega^{I}_{max}))$, we have $\sigma\in \U_{2n+1}((R,\Delta),(I,\Omega^{I}_{max}))$. By Lemma \ref{69}, there is an $f\in \TEU_{2n+1}(R,\Delta)$ such that $(f\sigma)_{11}$ is left invertible. Set $\tau:={}^{f}\sigma$. Clearly $\tau_{11}=(f\sigma)_{11}$ and hence $\tau_{11}$ is left invertible. Further $\tau\in \U_{2n+1}((R,\Delta),(I,\Omega^{I}_{max}))\setminus \U_{2n+1}((R,\Delta),(I,\Omega))$ (note that $\U_{2n+1}((R,\Delta),(I,\Omega))$ is E-normal by Corollary \ref{10001}). Let $x$ be an left inverse of $\tau_{11}$ and set $\xi:=T_{21}((1-\tau_{11}-\tau_{21})x)\in \EU_{2n+1}(I,\Omega)\subseteq H$, $\zeta:=T_{12}(1)$ and $\omega:={}^{\zeta}(\xi\tau)$. One checks easily that $\omega_{11}=1$ and $\omega\in \U_{2n+1}((R,\Delta),(I,\Omega^{I}_{max}))\setminus \U_{2n+1}((R,\Delta),(I,\Omega))$. Thus we can apply the previous lemma to $\omega$.
\end{proof}
\section{Sandwich classification of E-normal subgroups}\label{sec5}
In this section $(R,\Delta)$ denotes a Hermitian form ring and $n\geq 3$ a natural number. We will show that if $R$ semilocal or quasifinite, then a subgroup $H$ of $\U_{2n+1}(R,\Delta)$ is E-normal if and only if there is an odd form ideal $(I,\Omega)$ of $(R,\Delta)$ such that 
\[\EU_{2n+1}((R,\Delta),(I,\Omega))\subseteq H \subseteq \CU_{2n+1}((R,\Delta),(I,\Omega)).\]
Further $(I,\Omega)$ is uniquely determined, namely it is the level of $H$. 

If $R$ is semilocal, then we use the Extraction Theorem \ref{extraction} of the last section in order to prove this result. If $R$ is Noetherian and there is a subring $C$ of $\Center(R)$ with certain properties (i.a. $(C\setminus \m)^{-1}R$ is semilocal for any maximal ideal $\m$ of $C$, cf. Subsection \ref{sec5.2}), then we use localisation to deduce the result from the semilocal case. If $R$ is quasifinite, then we use the fact that $\EU_{2n+1}$ and $\U_{2n+1}$ commute with direct limits to deduce the result from the Noetherian case.
\subsection{Semilocal case}\label{sec5.1}
In this subsection we assume that $R$ is semilocal.
\begin{Theorem}\label{74}
Let $H$ be a subgroup of $\U_{2n+1}(R,\Delta)$. Then $H$ is E-normal if and only if there is an odd form ideal $(I,\Omega)$ of $(R,\Delta)$ such that 
\[\EU_{2n+1}((R,\Delta),(I,\Omega))\subseteq H \subseteq \CU_{2n+1}((R,\Delta),(I,\Omega)).\]
Further $(I,\Omega)$ is uniquely determined, namely it is the level of $H$.
\end{Theorem}
\begin{proof}
~\\
$\Rightarrow$:\\
Suppose that $H$ is E-normal. Let $(I,\Omega)$ be the level of $H$. Then $\EU_{2n+1}((R,\Delta),(I,\Omega))\subseteq H$. Suppose that $H\not\subseteq \CU_{2n+1}((R,\Delta),(I,\Omega))$. Then, by Theorem \ref{extraction}, $H$ contains an elementary matrix which is not $(I,\Omega)$-elementary. But this contradicts the assumption that the level of $H$ is $(I,\Omega)$. Hence $H\subseteq \CU_{2n+1}((R,\Delta),(I,\Omega))$. Let $(I',\Omega')$ be an odd form ideal such that 
\[\EU_{2n+1}((R,\Delta),(I',\Omega'))\subseteq H \subseteq \CU_{2n+1}((R,\Delta),(I',\Omega')).\]
Then, by the standard commutator formulas,
\begin{align*}
&\EU_{2n+1}((R,\Delta),(I,\Omega))\\
=&[\EU_{2n+1}((R,\Delta),(I,\Omega)),\EU_{2n+1}(R,\Delta)]\\
\subseteq&[\CU_{2n+1}((R,\Delta),(I',\Omega')),\EU_{2n+1}(R,\Delta)]\\
=&\EU_{2n+1}((R,\Delta),(I',\Omega')).
\end{align*}
It is easy to deduce that $I\subseteq I'$ and  $\Omega\subseteq\Omega'$. After reversing the roles of $I$ and $I'$ and of $\Omega$ and $\Omega'$, we obtain by the same argument that $I'\subseteq I$ and $\Omega'\subseteq \Omega$. Thus $I=I'$ and $\Omega=\Omega'$.\\
$\Leftarrow$:\\
Suppose that $\EU_{2n+1}((R,\Delta),(I,\Omega))\subseteq H \subseteq \CU_{2n+1}((R,\Delta),(I,\Omega))$ for some odd form ideal $(I,\Omega)$. Then
\begin{align*}
&[H, \EU_{2n+1}(R,\Delta)]\\
\subseteq& [\CU_{2n+1}((R,\Delta),(I,\Omega)),\EU_{2n+1}(R,\Delta)]\\
=& \EU_{2n+1}((R,\Delta),(I,\Omega))\\
\subseteq& H
\end{align*}
and hence $H$ is E-normal.
\end{proof}
\subsection{Noetherian case}\label{sec5.2}
In this subsection we assume that $R$ is Noetherian. Further we assume that there is a subring $C$ of $\Center(R)$ such that
\begin{enumerate}[(1)]
\item $\bar c=c$ for any $c\in C$,
\item if $(I,\Omega)$ is an odd form ideal of $(R,\Delta)$, $(0,x)\in \Omega$ and $c\in C$, then $(0,cx)\in\Omega$,
\item $R_\m$ is semilocal for any maximal ideal $\m$ of $C$ and
\item if $(I,\Omega)$ is an odd form ideal of $(R,\Delta)$, then $\Omega^{I}_{max}/\Omega$ is a Noetherian $C$-module.
\end{enumerate} 
In (3), $R_\m$ denotes the ring $S_\m^{-1}R$ where $S_\mathfrak{m}=C\setminus \mathfrak{m}$. If $\m$ is a maximal ideal of $C$, set $\overline{(\frac{x}{s})}:=\frac{\overline x}{s}$, $\lambda_\m:=\frac{\lambda}{1}$, $\mu_\m:=\frac{\mu}{1}$ and $\Delta_\m:=\{(\frac{x}{s},\frac{y}{s^2})\mid (x,y)\in\Delta,s\in S_\m\}$. Then $((R_\m,~\bar{}~,\lambda_\m,\mu_\m),\Delta_\m)$ is a Hermitian form ring. If $(I,\Omega)$ is an odd form ideal of $(R,\Delta)$, set $I_\m:=\{\frac{x}{s}\mid x\in I,s\in S_\mathfrak{m}\}$ and $\Omega_\m:=\{(\frac{x}{s},\frac{y}{s^2})\mid (x,y)\in\Omega,s\in S_\m\}$. Then $(I_\m,\Omega_\m)$ is an odd form ideal of $(R_\m,\Delta_\m)$. Denote the localisation homomorphism $R\rightarrow R_\mathfrak{m}$ by $f_\mathfrak{m}$. Clearly $f_\mathfrak{m}$ induces a group homomorphism $F_\mathfrak{m}:\U_{2n+1}(R,\Delta)\rightarrow \U_{2n+1}(R_\mathfrak{m},\Delta_\mathfrak{m})$. It follows from Lemma \ref{10000} that the restriction of $F_\m$ to $\NU_{2n+1}((R,\Delta),(I,\Omega))$ (which we also denote by $F_\m$) is a group homomorphism $\NU_{2n+1}((R,\Delta),(I,\Omega))\rightarrow \NU_{2n+1}((R_\m,\Delta_\m),(I_\m,\Omega_\m))$. Let 
\begin{align*}
\phi_\m:&\NU_{2n+1}((R,\Delta),(I,\Omega))/\U_{2n+1}((R,\Delta),(I,\Omega))\\
\rightarrow~& \NU_{2n+1}((R_\m,\Delta_\m),(I_\m,\Omega_\m))/\U_{2n+1}((R_\m,\Delta_\m),(I_\m,\Omega_\m))
\end{align*}
be the group homomorphism induced by $F_\m$. Clearly we have a commutative diagram
\xymatrixcolsep{4pc}
\xymatrixrowsep{4pc}
\[\xymatrix{
\NU_{2n+1}((R,\Delta),(I,\Omega)) \ar[d]^{F_\m} \ar[r]^-{\pi} &\NU_{2n+1}((R,\Delta),(I,\Omega))/\U_{2n+1}((R,\Delta),(I,\Omega))\ar[d]^{\phi_\m}\\
\NU_{2n+1}((R_\m,\Delta_\m),(I_\m,\Omega_\m))\ar[r]^-{\pi_\m} &\NU_{2n+1}((R_\m,\Delta_\m),(I_\m,\Omega_\m))/\U_{2n+1}((R_\m,\Delta_\m),(I_\m,\Omega_\m)),}\]
where $\pi$ and $\pi_\m$ denote the canonical group homomorphisms.
\begin{Lemma}\label{75}
Let $(I,\Omega)$ be an odd form ideal of $(R,\Delta)$ and $h\in \U_{2n+1}(R,\Delta)\setminus \CU_{2n+1}((R,\Delta),(I,\Omega))$. Then there is a maximal ideal $\m$ of $C$ such that $F_\m(h)\in \U_{2n+1}(R_\m,\Delta_\m)\setminus \CU_{2n+1}((R_\m,\Delta_\m),(I_\m,\Omega_\m))$.
\end{Lemma}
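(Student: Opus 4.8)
The plan is to extract from the hypothesis a single failing membership and to show that it persists after passing to a suitable localisation $R_\m$.

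First I would unwind $h\notin CU_{2n+1}((R,\Delta),(I,\Omega))$ using Definition~\ref{49}: either $h\notin\tilde U_{2n+1}((R,\Delta),(I,\Omega))$, or there is a $g\in EU_{2n+1}(R,\Delta)$ with $\sigma:=[h,g]\notin U_{2n+1}((R,\Delta),(I,\Omega))$. In the first case Definition~\ref{41} yields a $u\in M(I,\Omega)$ with $q(hu)\minus q(u)\notin\Omega$ or $q(h^{-1}u)\minus q(u)\notin\Omega$; in the second case Lemma~\ref{39} yields either an element $a=\sigma_{ij}-\delta_{ij}\in R\setminus I$ for some $i,j\in\Theta_{hb}$, or a pair $(p,q')\in\Delta\setminus\Omega$ equal to $q(\sigma_{*j})$ for some $j\in\Theta_{hb}$ or to $(q(\sigma_{*0})\minus(1,0))\bullet x$ for some $x\in J(\Delta)$ (all these pairs lie in $\Delta$ by Definition~\ref{16}). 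Thus in every case we obtain a fixed \emph{witness}, either an $a\in R\setminus I$ or a pair $(p,q')\in\Delta\setminus\Omega$. Each of the expressions above is assembled from the matrix operations and from the maps $b,q,\+,\minus,\bullet$, which are given by the same polynomial formulas over $R$ and over $R_\m$ and therefore commute with $f_\m$ (here one uses $\overline{f_\m(x)}=f_\m(\bar x)$, valid since $\bar c=c$ for $c\in C$); moreover $F_\m$ maps $EU_{2n+1}(R,\Delta)$ into $EU_{2n+1}(R_\m,\Delta_\m)$ and maps $M(I,\Omega)$ into $M(I_\m,\Omega_\m)$. Hence it is enough to produce a maximal ideal $\m$ of $C$ at which the localised witness still fails, i.e. $f_\m(a)\notin I_\m$ (resp. $(f_\m(p),f_\m(q'))\notin\Omega_\m$); for then the corresponding localised membership fails, whence $F_\m(h)\notin CU_{2n+1}((R_\m,\Delta_\m),(I_\m,\Omega_\m))$.

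For an $I$-witness $a\in R\setminus I$, the set $\mathfrak a:=\{c\in C\mid ca\in I\}$ is an ideal of $C$ not containing $1$, hence $\mathfrak a\subseteq\m$ for some maximal ideal $\m$; the description of $I_\m$ gives $f_\m(a)\in I_\m\iff\mathfrak a\not\subseteq\m$, so $f_\m(a)\notin I_\m$. For an $\Omega$-witness $(p,q')\in\Delta\setminus\Omega$ I would split into two cases. If $(p,q')\notin\Omega^{I}_{max}=\Delta\cap(\tilde I\times I)$, then $q'\notin I$, or else $p\notin\tilde I$ and by definition of $\tilde I$ there is an $a\in J(\Delta)$ with $\bar a\mu p\notin I$; in either subcase we are back to an $I$-witness ($q'$, resp. $\bar a\mu p$), and since $\Omega_\m\subseteq\Omega^{I_\m}_{max}$ and $f_\m(a)\in J(\Delta_\m)$ the resulting $\m$ satisfies $(f_\m(p),f_\m(q'))\notin\Omega_\m$. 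If instead $(p,q')\in\Omega^{I}_{max}\setminus\Omega$, I would exploit that $\Omega^{I}_{max}/\Omega$ is a right $R$-module (Remark~\ref{12}): the annihilator $\mathfrak b:=\{r\in R\mid (p,q')\bullet r\in\Omega\}$ of the nonzero element $(p,q')\minus\Omega$ is a right ideal of $R$, so $\mathfrak b\cap C$ is an ideal of $C$, and it is proper since $(p,q')\bullet 1=(p,q')\notin\Omega$. Choosing a maximal ideal $\m\supseteq\mathfrak b\cap C$ and unwinding the description of $\Omega_\m$ (again using $\bar c=c$ to identify $(cp,c^2q')$ with $(p,q')\bullet c$) one gets $(f_\m(p),f_\m(q'))\in\Omega_\m\iff\mathfrak b\cap C\not\subseteq\m$, hence $(f_\m(p),f_\m(q'))\notin\Omega_\m$. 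The hypotheses (1) and (2) on $C$ are what guarantee, throughout, that $(R_\m,\Delta_\m)$ and $(I_\m,\Omega_\m)$ are again an odd form ring and an odd form ideal and that $F_\m$ is compatible with the form data.

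The step I expect to be the main obstacle is the first one: making precise that being outside $CU_{2n+1}((R,\Delta),(I,\Omega))$ is always certified by a single element outside $I$ or a single pair outside $\Omega$ --- this is essentially Lemma~\ref{39}, but the clause involving $\tilde U_{2n+1}((R,\Delta),(I,\Omega))$ and the extra-short-root contributions have to be traced --- together with checking that $f_\m$ genuinely commutes with all the auxiliary constructions $\tilde I$, $I_0$, $\tilde I_0$, $J(\Delta)$, $J(\Omega)$. The only essentially new difficulty compared with the general linear or even-dimensional unitary case is the $\Omega$-witness: since $\h$ is merely a quasimodule, the set $\{c\in C\mid (p,q')\bullet c\in\Omega\}$ need not be closed under addition, and the way around this is precisely to peel off first the easy case $(p,q')\notin\Omega^{I}_{max}$, so that the genuine $R$-module structure on $\Omega^{I}_{max}/\Omega$ can be used.
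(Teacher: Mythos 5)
Your proposal is correct and follows essentially the same route as the paper: extract a single witness (a matrix entry outside $I$, or a pair in $\Delta$ outside $\Omega$ which, after peeling off the easy subcase, lies in $\Omega^I_{max}$ so that the right $R$-module structure on $\Omega^I_{max}/\Omega$ makes the annihilator in $C$ a proper ideal), choose a maximal ideal $\m$ containing that annihilator, and verify via the explicit description of $I_\m$ and $\Omega_\m$ that the witness survives localisation. Your explicit treatment of the clause $h\notin\tilde U_{2n+1}((R,\Delta),(I,\Omega))$ and your remark on why the quasimodule structure forces the case split are points the paper leaves implicit, but the argument is the same.
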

\begin{proof}
Since $h\in \U_{2n+1}(R,\Delta)\setminus \CU_{2n+1}((R,\Delta),(I,\Omega))$, there is a $g\in \EU_{2n+1}(R,\Delta)$ such that $\sigma:=[h,g]\not\in \U_{2n+1}((R,\Delta),(I,\Omega))$. We will show that there is a maximal ideal $\m$ of $C$ such that $F_\m(\sigma)=[F_\m(h),F_\m(g)]\not\in \U_{2n+1}((R_\m,\Delta_\m),(I_\m,\Omega_\m))$ which implies that $F_\m(h)\not\in \CU_{2n+1}((R_\m,\Delta_\m),(I_\m,\Omega_\m))$.\\
\\
\underline{case 1} Assume that $\sigma\not\equiv e\bmod I,\tilde I,I_0,\tilde I_0 $.\\
Then either $\sigma_{hb}\not\equiv e_{hb}\bmod I $ or $\sigma_{0*}\not\equiv e_0^t\bmod  \tilde I,\tilde I_0$ (compare Remark \ref{40}).\\
\\
\underline{case 1.1} Assume that $\sigma_{hb}\not\equiv e_{hb}\bmod I $.\\
Then there are $i,j\in \Theta_{hb}$ such that $\sigma_{ij}\not\equiv\delta_{ij}\bmod I$. Set $Y:=\{c\in C\mid c(\sigma_{ij}-\delta_{ij})\in I\}$. Since $\sigma_{ij}-\delta_{ij}\not\in I$, $Y$ is a proper ideal of $C$. Hence it is contained in a maximal ideal $\m$ of $C$. Clearly $Y\subseteq \m$ and hence $S_\m\cap Y=\emptyset$. Assume $(F_\m(\sigma))_{ij}-\delta_{ij}\in I_{\m}$. One checks easily that then there is an $s\in S_\m$ such that $s\in Y$. But this contradicts $S_\m\cap Y=\emptyset$. Hence $(F_\m(\sigma))_{ij}-\delta_{ij}\not\in I_\m$ and therefore $F_\m(\sigma)\not\in \U_{2n+1}((R_{\m},\Delta_{\m}),(I_{\m},\Omega_{\m}))$.\\
\\
\underline{case 1.2} Assume that $\sigma_{0*}\not\equiv e_0^t\bmod  \tilde I,\tilde I_0$.\\
From the assumption it follows that $(F_\m(\sigma))_{0*}\not\equiv e_0^t\bmod  \widetilde{I_\m},(\widetilde{I_\m})_0$ (see case 1.1). Hence $F_\m(\sigma)\not\in \U_{2n+1}((R_{\m},$ $\Delta_{\m}),(I_{\m},\Omega_{\m}))$.\\
\\
\underline{case 2} Assume that $\sigma\equiv e\bmod I,\tilde I,I_0,\tilde I_0 $.\\ 
It follows that either $\exists j\in\Theta_{hb}:q(\sigma_{*j})\not\in\Omega$ or $\exists x\in J(\Delta):(q(\sigma_{*0})\minus(1,0))\circ x\not\in\Omega$ since $\sigma\not\in \U_{2n+1}((R,\Delta),(I,\Omega))$.\\
\\
\underline{case 2.1} Assume that $\exists j\in\Theta_{hb}:q(\sigma_{*j})\not\in\Omega$.\\
Set $Y:=\{c\in C\mid q(\sigma_{*j})\circ c\in \Omega\}$. Since $q(\sigma_{*j})\not\in \Omega$, $Y$ is a proper ideal of $C$. Hence it is contained in a maximal ideal $\m$ of $C$. Clearly $Y\subseteq \m$ and hence $S_\m\cap Y=\emptyset$. Assume $q((F_\m(\sigma))_{*j})\in\Omega_\m$. One checks easily that then there is an $s\in S_\m$ such that $s\in Y$.
But this contradicts $S_\m\cap Y=\emptyset$. Hence $q((F_\m(\sigma))_{*j})\not\in \Omega_{m}$ and therefore $F_\m(\sigma)\not\in \U_{2n+1}((R_{\m},\Delta_{\m}),(I_{\m},\Omega_{\m}))$. Thus $\phi_\m(h)$ does not commute with $\phi_\m(g)$.\\
\\
\underline{case 2.2} Assume that $\exists x\in J(\Delta):(q(\sigma_{*0})\minus(1,0))\circ x\not\in\Omega$.\\
From the assumption it follows that $(q((F_\m(\sigma))_{*0})\minus(1,0))\circ f_\m(x)\not\in\Omega_\m$ (see case 2.1). Hence $F_\m(\sigma)\not\in \U_{2n+1}((R_{\m},\Delta_{\m}),(I_{\m},\Omega_{\m}))$.
\end{proof}

In the next two lemmas we use the following notation. If $\Omega$ is a relative odd form parameter for an involution invariant ideal $I$ and $c\in C$, set $c\Omega:=\Omega\circ c\+ (0,c\Gamma)$ where $\Gamma=\Gamma(\Omega)$. One checks easily that $(cI,c\Omega)$ is an odd form ideal of $(R,\Delta)$.
\Lemma\label{56}{Let $(I,\Omega)$ be an odd form ideal of $(R,\Delta)$ and $\m$ a maximal ideal of $C$. Then there is an $s_0\in S_\m$ such that $\phi_\m$ is injective on $\pi(\U_{2n+1}((R,\Delta),(s_0R,s_0\Delta))\cap \NU_{2n+1}((R,\Delta),(I,\Omega)))$.
}
\begin{proof}
First we show that there is an $s_1\in S_\m$ with the properties 
\begin{enumerate}[(1)]
\item if $x \in s_1R$ and $\exists t\in S_\m: tx\in I$, then $x\in I$ and
\item if $a \in s_1\Omega_{max}^{I}$ and $\exists t\in S_\m: a\circ t\in \Omega$, then $a\in \Omega$.
\end{enumerate}
Then we will show that $\phi_\m$ is injective on $K':=\pi(K)$ where 
\[K:=\U_{2n+1}((R,\Delta),(s_1^3R,s_1^3\Delta))\cap \NU_{2n+1}((R,\Delta),(I,\Omega)).\]
For any $s\in S_\m$ set $Y(s):=\{x\in R/I\mid x\hat s=0\}$ and $Z(s):=\{a\in \Omega_{max}^I/\Omega\mid as=0\}$ where $\hat s$ is the image of $s$ in $R/I$. Then for any $s\in S_\m$, $Y(s)$ is an ideal of $R/I$ and $Z(s)$ an $R$-submodule of $\Omega^{I}_{max}/\Omega$. Since $R$ is Noetherian, $R/I$ is Noetherian and hence the set $A:=\{Y(s)\mid s\in S_\m\}$ has a maximal element $Y(s_3)$. Since $A$ is directed (namely $Y(s),Y(s')\subseteq Y(ss')$), $Y(s_3)$ is the greatest element of $A$. Clearly all elements $x\in s_3R$ have the property that $tx\in I$ for some $t\in S_\m$ implies $x\in I$. Since $\Omega^{I}_{max}/\Omega$ is a Noetherian $R$-module, the set $B:=\{Z(s)\mid s\in S_\m\}$ has a maximal element $Z(s_2)$. Since $B$ is directed (namely $Z(s),Z(s')\subseteq Z(ss')$), $Z(s_2)$ is the greatest element of $B$. Clearly all elements $a\in s_2^2\Omega_{max}^{I}$ have the property that $a\circ t\in\Omega$ for some $t\in S_\m$ implies $a\in \Omega$. Set $s_1:=s_3s_2^2$. Then clearly $s_1$ has the properties (1) and (2) above. We show next that $\phi_\m$ is injective on $K'$.\\
Let $g'_1,g'_2\in K'$ such that $\phi_\m(g'_1)=\phi_\m(g_2')$. Since $g'_1,g'_2$, there are $g_1,g_2\in K$ such that $\pi(g_1)=g'_1$ and $\pi(g_2)=g'_2$. Set $h:=g_1^{-1}g_2\in K$. Clearly $\phi_\m(g'_1)=\phi_\m(g_2')$ is equivalent to $h_\m:=F_\m(h)\in \U_{2n+1}((R_\m,\Delta_\m),(I_\m,\Omega_\m))$, i.e.
\begin{enumerate}[(a)]
\item 
\begin{align*}
(h_\m)_{hb}\equiv e_{hb}  (mod~I_\m) \text{ and }
\end{align*}
\item 
\begin{align*}
q((h_\m)_{*j})\in\Omega_\m~\forall j\in\Theta_{hb}\text{ and }(q((h_\m)_{*0})\ominus(1,0))\circ x\in\Omega_\m~\forall x\in J(\Delta_\m).
\end{align*}
\end{enumerate}
We want to show that $g'_1=g'_2$ which is equivalent to $h\in \U_{2n+1}((R,\Delta),(I,$ $\Omega))$, i.e.
\begin{enumerate}[(a')]
\item 
\begin{align*}
h_{hb}\equiv e_{hb}  (mod~I) \text{ and }
\end{align*}
\item 
\begin{align*}
q(h_{*j})\in\Omega~\forall j\in\Theta_{hb}\text{ and }(q(h_{*0})\ominus(1,0))\circ x\in \Omega~\forall x\in J(\Delta).
\end{align*}
\end{enumerate}
First we show that (a'). Let $i,j\in\Theta_{hb}$ such that $i\neq j$. Since (a) holds, $(h_\m)_{ij}\in I_\m$. It is easy to deduce that $th_{ij}\in I$ for some $t\in S_\m$. Since $h\in \U_{2n+1}((R,\Delta),(s_1^3R,s_1^3\Delta))$, we have $h_{ij}\in s_1^3R\subseteq s_1R$. Since $s_1$ has property (1), it follows that $h_{ij}\in I$. Analogously one can show that $h_{ii}-1\in I~\forall i\in\Theta_{hb}$ and hence (a') holds. Further one can show that $h_{0*}\equiv e_0^t(mod~\tilde I,\tilde I_0)$ which implies $h\equiv e(mod~I,\tilde I,I_0,\tilde I_0)$. Next we show (b').\\
Let $j\in\Theta_{hb}$. Since (b) holds, $q((h_m)_{*j})\in\Omega_\m$. It is easy to deduce that $q(h_{*j})\circ t\in\Omega$ for some $t\in S_\m$. Since $h\in \U_{2n+1}((R,\Delta),(s_1^3R,s_1^3\Delta))$, we have $q(h_{*j})\in s_1^3\Delta$. On the other hand $q(h_{*j})\in \tilde I\times I$ since $h\equiv e  (mod~I,\tilde I,I_0,\tilde I_0)$. It is easy to deduce that $q(h_{*j})\in s_1\Omega_{max}^I$. Since $s_1$ has property (2), it follows that $q(h_{*j})\in \Omega$. Analogously one can show that $(q(h_{*0})\minus(1,0))\circ x\in\Omega~\forall x\in J(\Delta)$. Hence (b') holds. It follows that $g'_1=g'_2$ and thus $\phi_\m$ is injective on $K'$.
\end{proof}

\textnormal{In the next lemma we use the following notation. Suppose $\m$ is a maximal ideal of $C$, $s\in S_\m$ and $t\in C$. We denote the subset $\{\frac{x}{s}\mid x\in tR\}$ of $(tR)_\m$ by $(1/s)tR$. Further, for any $\epsilon\in \{\pm 1\}$ we denote the subset $\{(\frac{x}{s^2},\frac{y}{s})\mid  (x,y)\in (t\Delta)^\epsilon\}$ of $(t\Delta_\m)^\epsilon$ by $(1/s)t\Delta^\epsilon$ (instead of $(1/s)t\Delta^1$ we sometimes write $(1/s)t\Delta$). For any $N\in\mathbb{N}$ we denote by $\E^N((1/s)tR,(1/s)t\Delta)$ the subset of $\EU_{2n+1}((tR)_\m,(t\Delta)_\m)$ consisting of all products of $N$ elementary matrices of the form $T_{ij}(x)$ where $x\in (1/s)tR$ or $T_i(a)$ where $a\in (1/s)t\Delta^{-\epsilon(i)}$. Instead of $\E^N((1/1)tR,(1/1)t\Delta)$ we sometimes write $\E^N(tR,t\Delta)$. If $k\in\mathbb{N}$ and $\M_1,\dots,\M_k$ are subsets of $\U_{2n+1}(R_\m,\Delta_\m)$, then $^{\M_k}(\dots^{\M_2}(^{\M_1}\E^N((1/s)tR,(1/s)t\Delta))\dots)$ denotes the set of all product of $N$ matrices of the form $^{\sigma_1}(\dots^{\sigma_{k-2}}(^{\sigma_{k}}\mu )\dots)$ where $\sigma_i\in \M_i~\forall i\in \{1,\dots,k\}$ and $\mu \in \E^1((1/s)tR,(1/s)t\Delta)$.}
\begin{Lemma}\label{57}
Let $\m$ be a maximal ideal of $C$, $s\in S_\m$ and $t\in C$. Given $K,L,m\in\mathbb{N}$ there are $k,M\in\mathbb{N}$, e.g. $k=(m+2)4^K+2\cdot 4^{K-1}+\dots+2\cdot 4$ and $M=L\cdot 22^{K}$, such that 
\[^{\E^K((1/s)tR,(1/s)t\Delta)}\E^L(s^{k}tR,s^kt\Delta)\subseteq \E^M(s^{m}tR,s^mt\Delta).\]
\end{Lemma}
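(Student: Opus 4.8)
The plan is to prove Lemma \ref{57} by induction on $K$, reducing the base case $K=1$ to the conjugation of a single elementary generator. The starting observation is that conjugation distributes over products: if $\nu=\nu_{1}\cdots\nu_{L}$ with each $\nu_{j}$ an elementary generator and $\sigma\in EU_{2n+1}((tR)_\m,(t\Delta)_\m)$, then ${}^{\sigma}\nu=({}^{\sigma}\nu_{1})\cdots({}^{\sigma}\nu_{L})$. Hence, once one knows that conjugating a single elementary generator $\mu$ whose parameter is divisible by $s^{k'}$ (and lies in $tR$, resp. in $t\Delta^{\pm}$) by a single $g\in E^{1}((1/s)tR,(1/s)t\Delta)$ produces an element of $E^{N_{0}}(s^{m}tR,s^{m}t\Delta)$, the same conjugation of an $L$-fold product lies in $E^{LN_{0}}(s^{m}tR,s^{m}t\Delta)$; so the base case is reduced to $K=L=1$.

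For $K=L=1$ I would write ${}^{g}\mu=[g,\mu]\mu$ and run through the finitely many cases determined by the types of $g$ and $\mu$ (short root matrix $T_{ij}(\cdot)$ or extra short root matrix $T_{i}(\cdot)$) and by the relative positions of their indices, applying the relations (S1)--(SE2) of Lemma \ref{23}, where (S1) is used to bring short root generators into the form to which the interaction relations apply. In each case $[g,\mu]$ is a product of boundedly many elementary generators whose parameters are explicit words in the parameters of $g$ and $\mu$. The arithmetic point is that the parameter of $g$ is a fraction whose denominator divides $s^{2}$ (namely $s$ for a short root matrix, and $s^{2}$ for the first coordinate of an extra short root matrix), with numerator in $tR$, resp. in $t\Delta^{\pm}$, while the parameter of $\mu$ is divisible by $s^{k'}$ and lies in $tR$, resp. in $t\Delta^{\pm}$; since in the relations (SE2), (E2), (E3) the entry of $g$ occurs quadratically (e.g. the term $xz\lambda^{(\epsilon(j)-1)/2}\bar x\lambda^{(1-\epsilon(i))/2}$ in (SE2), and the terms $\bar x_{1}\mu x_{2}$ in (E2), (E3)), clearing denominators costs only a bounded number of powers of $s$, and taking $k'=4m+8$ leaves ample room to conclude that every parameter produced is divisible by $s^{m}$. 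One also checks that these parameters genuinely lie in $s^{m}tR$, resp. in $(s^{m}t\Delta)^{\pm\epsilon}$, not merely in $s^{m}tR\times R$; here one uses that $t\in C$ is fixed by the involution, that $(s^{m}tR,s^{m}t\Delta)$ is an odd form ideal of $(R,\Delta)$, and condition (2) imposed on $C$ whenever central scalars intervene. This yields ${}^{g}\mu\in E^{22}(s^{m}tR,s^{m}t\Delta)$, the bound $22$ being a deliberately generous count of the generators produced.

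For the inductive step I would write a conjugator $\sigma\in E^{K}((1/s)tR,(1/s)t\Delta)$ as $\sigma=\sigma_{1}\cdots\sigma_{K}$ with each $\sigma_{\ell}\in E^{1}((1/s)tR,(1/s)t\Delta)$, so that conjugation by $\sigma$ is the composite of the conjugations by $\sigma_{K},\sigma_{K-1},\dots,\sigma_{1}$ in that order, and peel these off one at a time. With $k_{0}:=m$ and $k_{\ell}:=4k_{\ell-1}+8$, the base case (applied with target power $k_{K-1}$) gives ${}^{\sigma_{K}}E^{L}(s^{k_{K}}tR,s^{k_{K}}t\Delta)\subseteq E^{22L}(s^{k_{K-1}}tR,s^{k_{K-1}}t\Delta)$, a second application with $\sigma_{K-1}$ gives containment in $E^{22^{2}L}(s^{k_{K-2}}tR,s^{k_{K-2}}t\Delta)$, and after $K$ steps one lands in $E^{22^{K}L}(s^{m}tR,s^{m}t\Delta)$. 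Unfolding the recursion $k_{\ell}=4k_{\ell-1}+8$ with $k_{0}=m$ gives $k_{K}=4^{K}m+8(4^{K-1}+\dots+4+1)=(m+2)4^{K}+2\cdot 4^{K-1}+\dots+2\cdot 4$, so the choices $k=k_{K}$ and $M=L\cdot 22^{K}$ work.

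The main obstacle is the base-case analysis for the extra short root (Heisenberg) generators. The operation $\+$ on $\h$ is not additive — it carries the cross term $-\bar x_{1}\mu x_{2}$ (Lemma \ref{7}(1),(3)) — so when extra short root generators are multiplied or recombined one must control these cross terms, and the conjugation relations (SE2), (E2), (E3) produce parameters quadratic in the conjugator's entry, so that the power of $s$ in a denominator can roughly double under one such relation. Keeping track of which coordinate of a pair $(x,y)\in\h$ is divisible by which power of $s$, and verifying membership in $(s^{m}t\Delta)^{\pm\epsilon(i)}$ rather than merely a divisibility statement on the two coordinates, is the only place where real care is needed; the short root cases, the bookkeeping of the (non-optimal) constants, and the reductions of $E^{K}$ and $E^{L}$ to single generators are purely formal.
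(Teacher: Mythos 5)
Your overall architecture agrees with the paper's: reduce to $L=1$, induct on $K$ by peeling off one conjugating generator at a time, and in the base case run a case analysis over the types of the two generators, with the recursion $k_{\ell}=4k_{\ell-1}+8$ unfolding to the stated constant. However, there is a genuine gap in your base case. You assert that for $K=L=1$ the commutator $[g,\mu]$ is, \emph{in each case}, a product of boundedly many elementary generators obtained directly from the relations (S1)--(SE2) of Lemma \ref{23}. This fails in exactly the two ``opposite--position'' configurations: $[T_{ij}(x),T_{ji}(y)]$ for two short root matrices (applying (S1) to the second factor only turns it into $T_{-i,-j}$, which still violates the index conditions of (S3)--(S5)), and $[T_{i}(x_1,y_1),T_{-i}(x_2,y_2)]$ for two extra short root matrices (the relations (E1)--(E3) cover $j=i$ and $j\neq\pm i$ but not $j=-i$). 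These are precisely the hard cases. The paper handles the first by citing the argument of Hazrat (Part I of its proof) and the second by a factorization trick: writing $(x_2,y_2)=(a,b)\bullet c$ with $c=s^{m+2}$ and then decomposing $T_{-h}(x_2,y_2)=T_{ph}(-cb)\,[T_{-h,p}(c),T_{p}(a,b)]$ via (SE2), so that each subsequent conjugation falls under a covered case; a further multiplicative splitting $d_1d_2$ of a parameter is needed inside that computation. Your proposal supplies no substitute for this device.

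Relatedly, your diagnosis of where the factor $4$ in the recursion comes from is off. Clearing denominators caused by the quadratic occurrence of the conjugator's entry in (SE2), (E2), (E3) costs only an \emph{additive} bounded power of $s$ per step, so if the relations really sufficed as you claim, one could take $k=m+O(K)$. The geometric growth $k\sim(m+2)4^{K}$ is forced by the \emph{multiplicative} factorizations in the opposite--position cases: one must write a parameter divisible by $s^{4(m+2)}$ as $(a,b)\bullet c$ with $\bar cbc=s^{2(m+2)}b$ and $c=s^{m+2}$, and then split again. So the constant you correctly reproduce is itself evidence that the mechanism you describe for the base case is not the one that actually closes the argument.
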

\begin{proof}
In order to prove the lemma it clearly suffices to prove the case that $L=1$. We will do this by induction on $K$.\\
\\
\underline{$K=1$}\\
Let $m\in\mathbb{N}$. We have to show that
\[^{\E^1((1/s)tR,(1/s)t\Delta)}\E^1(s^{(m+2)4}tR,s^{(m+2)4}t\Delta)\subseteq \E^{22}(s^{m}tR,s^mt\Delta).\]
Let $\sigma\in \E^1((1/s)tR,(1/s)t\Delta)$ and $\tau\in \E^1(s^{(m+2)4}tR,s^{(m+2)4}t\Delta)$. We have to show that $\rho:=$ $^{\sigma}\tau\in \E^{22}(s^{m}tR,s^mt\Delta)$.\\
\\
\underline{Part I}\\
Assume that $\sigma$ and $\tau$ are short root matrices. Then $\rho\in \E^{14}(s^{m}tR,s^mt\Delta)$ by \cite[proof of Lemma 4.1, Case I]{hazrat}.\\
\\
\underline{Part II}\\
Assume that $\sigma$ is an extra short root and $\tau$ a short root matrix. Hence there are $h,i,j\in\Theta_{hb}$ where $i\neq\pm j$, $(y,z)\in (1/s)(t\Delta)^{-\epsilon(h)}$ and $x\in (1/1)s^{(m+2)4}tR$ such that $\sigma=T_h(y,z)$ and $\tau=T_{ij}(x)$.\\ 
\underline{case 1} Assume that $h\neq j,-i$.\\
Then $\sigma$ and $\tau$ commute by (SE1) in Lemma \ref{23}. Hence $\rho=\tau\in \E^{1}(s^{m}tR,s^mt\Delta)$.\\
\underline{case 2} Assume that $h=j$.\\
Then 
\begin{align*}
\rho&=T_j(y,z)T_{ij}(x)T_j(\minus_{-\epsilon(j)}(y,z))\\
&=T_{ij}(x)\underbrace{T_{ij}(-x)T_j(y,z)T_{ij}(x)T_j(\minus_{-\epsilon(j)}(y,z))}_{(SE2)}\\
&=T_{ij}(x)T_{j,-i}(-z\lambda_\m^{(\epsilon(j)-1)/2}\bar x\lambda_\m^{(1-\epsilon(i))/2})\\
&~\cdot T_i(-y\lambda_\m^{(\epsilon(j)-1)/2}\bar x\lambda_\m^{(1-\epsilon(i))/2},xz\lambda_\m^{(\epsilon(j)-1)/2}\bar x\lambda_\m^{(1-\epsilon(i))/2}))\\
&\in \E^{3}(s^{m}tR,s^mt\Delta).
\end{align*}
\\
\underline{case 3} Assume that $h=-i$.\\
This case can be reduced to case 2 using (S1).\\
\\
\underline{Part III}\\
Assume that $\sigma$ and $\tau$ are extra short root matrices. Hence there are $h,i\in\Theta_{hb}$, $(x_1,y_1)\in (1/s)(t\Delta)^{-\epsilon(h)}$ and $(x_2,y_2)\in (1/1)(s^{(m+2)4}t\Delta)^{-\epsilon(i)}$ such that $\sigma=T_h(x_1,y_1)$ and $\tau=T_{i}(x_2,y_2)$.\\
\underline{case 1} Assume that $h\neq \pm i$.\\
Then 
\begin{align*}
\rho&=T_h(x_1,y_1)T_{i}(x_2,y_2)T_h(\minus_{-\epsilon(h)}(x_1,y_1))\\
&=\underbrace{T_h(x_1,y_1)T_{i}(x_2,y_2)T_h(\minus_{-\epsilon(h)}(x_1,y_1))T_i(\minus_{-\epsilon(i)}(x_2,y_2))}_{(E2)}T_{i}(x_2,y_2)\\
&=T_{h,-i}(-\lambda_\m^{-(1+\epsilon(h))/2}\bar x_1\mu_\m x_2)T_{i}(x_2,y_2)\\
&\in \E^{2}(s^mtR,s^mt\Delta).
\end{align*}
\\
\underline{case 2} Assume that $h=i$.\\
Then
\begin{align*}
\rho&~=\underbrace{T_i(x_1,y_1)T_{i}(x_2,y_2)T_i(\minus_{-\epsilon(i)}(x_1,y_1))}_{(E1)}\\
&~=T_{i}(x_2,y_2-\lambda_\m^{-(\epsilon(i)+1)/2}(\bar x_1\mu_\m x_2-\bar x_2\mu_\m x_1))\\
&~\in \E^{1}(s^mtR,s^mt\Delta).
\end{align*}
\\
\underline{case 3} Assume that $h=-i$.\\
One checks easily that there is an $(a,b)\in (1/1)(s^{(m+2)2}t\Delta)^{-\epsilon(i)}$ such that $(x_2,y_2)=(a,b)\circ c $ where $c:=\frac{s^{m+2}}{1}$. Choose a $p\in\Theta_{hb}$ such that $p\neq\pm h$ and $\epsilon(p)=\epsilon(-h)$. By (SE2),
\[\tau=T_{-h}(x_2,y_2)=T_{ph}(-cb)[T_{-h,p}(c),T_p(a,b)].\]
Hence
\begin{align*}
\rho=~&^{T_h(x_1,y_1)}T_{-h}(x_2,y_2)\\
=~&^{T_h(x_1,y_1)}(T_{ph}(-cb)[T_{-h,p}(c),T_p(a,b)])\\
=~&\underbrace{^{T_h(x_1,y_1)}T_{ph}(-cb)}_{\text{Part II, case 2}}[\underbrace{^{T_h(x_1,y_1)}T_{-h,p}(c)}_{\text{Part II, case 3}},\underbrace{^{T_h(x_1,y_1)}T_p(a,b)}_{\text{Part III, case 1}}]\\
=~&\underbrace{T_{ph}(-cb)T_{h,-p}(y_1\lambda_\m^{(\epsilon(h)-1)/2}c\bar b\lambda_\m^{(1-\epsilon(h))/2})}_{T_1}\times\\
&\times \underbrace{T_p(x_1\lambda_\m^{(\epsilon(h)-1)/2}c\bar b\lambda_\m^{(1-\epsilon(h))/2},cby_1\lambda_\m^{(\epsilon(h)-1)/2}c\bar b\lambda_\m^{(1-\epsilon(h))/2})}_{T_2}\times\\
&\times[\underbrace{T_{-p,h}(-c)T_{hp}(cy_1)T_{-p}(cx_1,c^2y_1)}_{(SE1)},T_{h,-p}(-\lambda_\m^{-(1+\epsilon(h))/2}\bar x_1\mu_\m a)T_p(a,b)]\\
=~&T_1T_2[T_{-p}(cx_1,c^2y_1)T_{-p,h}(-c)T_{hp}(cy_1),T_{h,-p}(-\lambda_\m^{-(1+\epsilon(h))/2}\bar x_1\mu_\m a)T_p(a,b)]\\
=~&T_1T_2\underbrace{T_{-p}(cx_1,c^2y_1)}_{T_3}T_{-p,h}(-c)T_{hp}(cy_1)T_{h,-p}(-\lambda_\m^{-(1+\epsilon(h))/2}\bar x_1\mu_\m a)T_p(a,b)\times\\
&\times T_{hp}(-cy_1)T_{-p,h}(c)\underbrace{T_{-p}(\minus_{\epsilon(p)}(cx_1,c^2y_1))T_p(\minus_{-\epsilon(p)}(a,b))}_{T_8}\times\\
&\underbrace{T_{h,-p}(\lambda_\m^{-(1+\epsilon(h))/2}\bar x_1\mu_\m a)}_{T_9}\\
=~&T_1T_2T_3\underbrace{^{T_{-p,h}(-c)}T_{hp}(cy_1)}_{T_4}\underbrace{^{T_{-p,h}(-c)}T_{h,-p}(-\lambda_\m^{-(1+\epsilon(h))/2}\bar x_1\mu_\m a)}_{T_5}\times\\&\times\underbrace{^{T_{-p,h}(-c)}T_p(a,b)}_{T_6}\underbrace{^{T_{-p,h}(-c)}T_{hp}(-cy_1)}_{T_7}T_8T_9.
\end{align*}
(S5) implies that \[T_4=T_{-p}(0,c^2(-y_1+\lambda_\m^{(-1+\epsilon(p))/2}\bar y_1\lambda_\m^{(1+\epsilon(p))/2}))T_{hp}(cy_1)\in \E^{2}(s^mtR,s^mt\Delta)\] and hence \[T_7=T_4^{-1}=T_{hp}(-cy_1)T_{-p}(0,-c^2(-y_1+\lambda_\m^{(-1+\epsilon(p))/2}\bar y_1\lambda_\m^{(1+\epsilon(p))/2}))\in \E^{2}(s^mtR,s^mt\Delta).\]
Obviously $-\lambda_\m^{-(1+\epsilon(h))/2}\bar x_1\mu_\m a\in (1/1)s^{2m}t^2R$. Hence there is an $r\in R$ such that $-\lambda_\m^{-(1+\epsilon(h))/2}\bar x_1\mu_\m a=f_\m(s^{2m}t^2r)$. Set $d_1:=f_\m(s^mt)$, $d_2:=f_\m(s^mtr)$ and choose a $k\in\Theta_{hb}$ such that $k\neq\pm h,\pm p$. By (S4),
\begin{align*}
T_5=~&^{T_{-p,h}(-c)}T_{h,-p}(-\lambda_\m^{-(1+\epsilon(h))/2}\bar x_1\mu_\m a)\\
=~&^{T_{-p,h}(-c)}[T_{hk}(d_1),T_{k,-p}(d_2)]\\
=~&[^{T_{-p,h}(-c)}T_{hk}(d_1),^{T_{-p,h}(-c)}T_{k,-p}(d_2)]\\
=~&[T_{-p,k}(-cd_1)T_{hk}(d_1),T_{kh}(cd_2)T_{k,-p}(d_2)]\\
\in~&\E^{8}(s^mtR,s^mt\Delta).
\end{align*}
Further 
\begin{align*}
T_6=~&^{T_{-p,h}(-c)}T_p(a,b)\\
=~&[\underbrace{T_{-p,h}(-c)}_{(S1)},T_p(a,b)]T_p(a,b)\\
=~&\underbrace{[T_{-h,p}(c),T_p(a,b)]}_{(SE2)}T_p(a,b)\\
=~&T_{ph}(cb)T_{-h}(ca,c^2b)T_p(a,b)\\
\in~&\E^{3}(s^mtR,s^mt\Delta).
\end{align*}
Thus \[\rho=\underbrace{T_1}_{2}\underbrace{T_2}_{1}\underbrace{T_3}_{1}\underbrace{T_4}_{2}\underbrace{T_5}_{8}\underbrace{T_6}_{3}\underbrace{T_7}_{2}\underbrace{T_8}_{2}\underbrace{T_9}_{1}\in \E^{22}(s^mtR,s^mt\Delta).\]\\
\\
\underline{Part IV}\\
Assume that $\sigma$ is a short root and $\tau$ an extra short root matrix. All the possibilities which may occur here reduce to one of the cases above.\\
\\
Thus $\rho\in \E^{22}(s^mtR,s^mt\Delta)$.\\
\\
\underline{$K\rightarrow K+1$}\\
Let $m\in\mathbb{N}$. We have to show that
\[^{\E^{K+1}((1/s)tR,(1/s)t\Delta)}\E^1(s^{k}tR,s^{k}t\Delta)\subseteq \E^{M}(s^{m}tR,s^mt\Delta).\]
where $k=(m+2)4^{K+1}+2\cdot 4^{K}+\dots+2\cdot 4$ and $M=22^{K+1}$. Set $m':=(m+2)4$ and $M':=22^K$. Clearly
\begin{align*}
&^{\E^{K+1}((1/s)tR,(1/s)t\Delta)}\E^1(s^{k}tR,s^{k}t\Delta)\\
\subseteq&^{\E^{1}((1/s)tR,(1/s)t\Delta)}(^{\E^{K}((1/s)tR,(1/s)t\Delta)}\E^1(s^{k}tR,s^{k}t\Delta))\\
\overset{\text{I.A.}}{\subseteq}&^{\E^{1}((1/s)tR,(1/s)t\Delta)}\E^{M'}(s^{m'}tR,s^{m'}t\Delta)\\
\overset{K=1}{\subseteq}&\E^{M}(s^{m}tR,s^mt\Delta).
\end{align*}
\end{proof}
\begin{Definition}\label{76}
Let $G$ denote a group and $A$ a set of subgroups of $G$ such that
\begin{enumerate}[(1)]
\item for any $U,V\in A$ there is a $W\in A$ such that $W\subseteq U\cap V$ and
\item for any $g\in G$ and $U\in A$ there is a $V\in A$ such that $^gV\subseteq U$.
\end{enumerate}
Then $A$ is called a {\it base of open subgroups of $1\in G$} or just a {\it base for $G$}.
\end{Definition}
\begin{Lemma}\label{77}
Let $(I,\Omega)$ be an odd form ideal of $(R,\Delta)$, $\m$ a maximal ideal of $C$ and $s_0\in S_\m$. Set $A:=\{\EU_{2n+1}(ss_0R,ss_0\Delta)\mid s\in S_\m\}$. Then $A$ is a base for  $\EU_{2n+1}(R,\Delta)$ and $A_\m:=\{F_\m(U)\mid U\in A\}$ is a base for $\EU_{2n+1}(R_\m,\Delta_\m)$.
\end{Lemma}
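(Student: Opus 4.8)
The plan is to verify, in each of the two cases, the two axioms of a base of open subgroups (Definition \ref{76}), working inside $G=EU_{2n+1}(R,\Delta)$ for $A$ and inside $G=EU_{2n+1}(R_\m,\Delta_\m)$ for $A_\m$; since $F_\m$ is a group homomorphism it transports both axioms from $A$ to $A_\m$, so the substance is in the unlocalised statement.

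\emph{Axiom (1).} Given $U=EU_{2n+1}(s_1s_0R,s_1s_0\Delta)$ and $V=EU_{2n+1}(s_2s_0R,s_2s_0\Delta)$ in $A$, I would take $W:=EU_{2n+1}(s_1s_2s_0R,s_1s_2s_0\Delta)$; since $S_\m=C\setminus\m$ is multiplicatively closed, $s_1s_2\in S_\m$ and so $W\in A$. Using that $C\subseteq Center(R)$, that $\overline c=c$ for $c\in C$, and condition (2) imposed on $C$ (applied to the odd form ideal $(R,\Delta)$ itself, which makes $\Lambda(\Delta)$ stable under multiplication by $C$), one checks routinely that the odd form ideal $(s_1s_2s_0R,s_1s_2s_0\Delta)$ is contained in each of $(s_1s_0R,s_1s_0\Delta)$ and $(s_2s_0R,s_2s_0\Delta)$; monotonicity of the preelementary subgroup in its level then gives $W\subseteq U\cap V$. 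For $A_\m$ one applies $F_\m$ to $W\subseteq U$ and $W\subseteq V$.

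\emph{Axiom (2), the heart of the matter.} Given $g\in EU_{2n+1}(R,\Delta)$ and $U=EU_{2n+1}(s_1s_0R,s_1s_0\Delta)\in A$, write $g$ as a product of $K$ elementary short and extra short root matrices. The plan is to invoke Lemma \ref{57} with $t:=s_0$, $s:=s_1$, $L:=1$, $m:=1$: it yields $k,M\in\mathbb{N}$, depending only on $K$, such that conjugating any single $(s_1^ks_0R,s_1^ks_0\Delta)$-elementary root matrix by $g$ is a product of at most $M$ elementary root matrices of level $(s_1s_0R,s_1s_0\Delta)$, hence an element of $U$. Because $M$ and $k$ are independent of the particular root matrix, this gives $^gV\subseteq U$ with $V:=EU_{2n+1}(s_1^ks_0R,s_1^ks_0\Delta)$, and $V\in A$ since $s_1^k\in S_\m$. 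For $A_\m$ the same works once $g\in EU_{2n+1}(R_\m,\Delta_\m)$ has been written as a product of $K$ elementary root matrices and a common denominator $s\in S_\m$ has been cleared, so that $g$ lies in the set $E^K((1/s)s_0R,(1/s)s_0\Delta)$ appearing in Lemma \ref{57}; here one also uses that $f_\m$ sends the unit $s_1\in C\setminus\m$ to a unit of $R_\m$, so the subgroups $F_\m(EU_{2n+1}(s_1^as_0R,s_1^as_0\Delta))$ for varying $a$ are comparable. Alternatively, the unlocalised Axiom (2) can be deduced from the localised one together with Lemma \ref{56}: after shrinking $U$ and $V$ to lie inside the congruence subgroup $U_{2n+1}(R,\Delta)\cap GL_{2n+1}(R,s_*R)$ on which $F_\m$ is injective (and which, $GL_{2n+1}(R,s_*R)$ being normal, is carried into itself by conjugation), the inclusion $^{F_\m(g)}F_\m(V)\subseteq F_\m(U)$ pulls back to $^gV\subseteq U$.

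The step I expect to be the obstacle is Axiom (2), and it is exactly the one that Lemma \ref{57} is designed to handle. Naively, conjugating an elementary root matrix of level $(s_1^ks_0R,s_1^ks_0\Delta)$ by $g$ lands only in the normal closure $EU_{2n+1}((R,\Delta),(s_1^ks_0R,s_1^ks_0\Delta))$, not in a preelementary subgroup, and the relations (SE2), (E2), (E3) of Lemma \ref{23} create extra short root matrices whose form parameters need not lie in the relevant $(s_1^ks_0\Delta)^{-\epsilon(i)}$. What saves the argument is that, by dropping the exponent from $s_1^k$ down to $s_1$ — legitimate precisely because $S_\m$ is multiplicatively closed, so the smaller subgroup is still in $A$ — all these corrections can be absorbed while the total number of elementary factors stays bounded by a constant depending only on $K$; that quantitative fact is Lemma \ref{57}. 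Beyond this, only the bookkeeping relating $f_\m$ to the ideals $s^as_0R$ and the parameters $s^as_0\Delta$ in the localised case remains, and it is routine.
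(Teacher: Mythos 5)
Your overall strategy coincides with the paper's: verify the two axioms of Definition \ref{76} directly, settling axiom (1) by the multiplicative closure of $S_\m$ (with the same witness $W=EU_{2n+1}(s_1s_2s_0R,s_1s_2s_0\Delta)$ the paper uses) and axiom (2) by Lemma \ref{57}. But your instantiation of Lemma \ref{57} in axiom (2) does not work as stated. With the lemma's parameters set to $t:=s_0$ and $s:=s_1$, the conjugating set is $E^K((1/s_1)s_0R,(1/s_1)s_0\Delta)$, whose factors have parameters of the form $x/s_1$ with $x\in s_0R$. A general $g\in EU_{2n+1}(R,\Delta)$ is a product of root matrices whose parameters range over all of $R$ (resp.\ $\Delta$), and $y=(s_1y)/s_1$ lies in $(1/s_1)s_0R$ only if $s_1y\in s_0R$ --- false in general when $s_0$ is not a unit (already for $R=C=\mathbb{Z}$, $s_1=1$, $s_0=3$, $y=1$). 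So the hypothesis of Lemma \ref{57} is not met by $g$ under your choice of $t$ and $s$, and the inclusion $^gV\subseteq U$ for your $V=EU_{2n+1}(s_1^ks_0R,s_1^ks_0\Delta)$ is not justified; note also that $s_1^ks_0R$ is strictly larger than the ideal the correct argument delivers, so the claim is not merely unproved by this route but stronger than what Lemma \ref{57} yields.

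The repair stays entirely within your strategy and is what the paper does: take the lemma's $t:=1$ and $s:=s_1s_0\in S_\m$. Then every parameter $y\in R$ of $g$ satisfies $f_\m(y)=(s_1s_0y)/(s_1s_0)\in(1/(s_1s_0))R$, so $g$ (or $F_\m(g)$) lies in $E^K((1/(s_1s_0))R,(1/(s_1s_0))\Delta)$, and with $m=L=1$ Lemma \ref{57} carries $E^1((s_1s_0)^kR,(s_1s_0)^k\Delta)$ under conjugation by $g$ into $E^M(s_1s_0R,s_1s_0\Delta)\subseteq U$; the subgroup $V:=EU_{2n+1}((s_1s_0)^kR,(s_1s_0)^k\Delta)$ is in $A$ because $(s_1s_0)^k=(s_1^ks_0^{k-1})s_0$ with $s_1^ks_0^{k-1}\in S_\m$. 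The same correction is needed in the localised case: clearing a common denominator $t'\in S_\m$ puts $g$ in $E^K((1/t')R,(1/t')\Delta)$, not in $E^K((1/t')s_0R,(1/t')s_0\Delta)$, and one then applies Lemma \ref{57} with $s:=t's_1s_0$ and $t:=1$, exactly as in the paper. Your closing remark that the unlocalised axiom (2) could alternatively be pulled back from the localised one via the injectivity of $F_\m$ on $U_{2n+1}(R,\Delta)\cap GL_{2n+1}(R,s_0R)$ (Lemma \ref{56}) is a sensible observation --- it addresses the fact that Lemma \ref{57} is formally stated in the localised group --- but it does not rescue the faulty parameter choice, which must be fixed in either version.
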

\begin{proof}
First we show that $A$ is a base for $\EU_{2n+1}(R,\Delta)$.
\begin{enumerate}[(1)]
\item Let $U=\EU_{2n+1}(ss_0R,ss_0\Delta),V=\EU_{2n+1}(ts_0R,ts_0\Delta)\in A$. Set $W:=\EU_{2n+1}(sts_0R,sts_0\Delta)\in A$. Then clearly $W\subseteq U\cap V$.
\item Let $g\in \EU_{2n+1}(R,\Delta)$ and $U=\EU_{2n+1}(ss_0R,ss_0\Delta)\in A$. There is a $K\in\mathbb{N}$ such that $g$ is the product of $K$ elementary matrices. Set $V:=\EU_{2n+1}((ss_0)^{3\cdot 4^K+}$ $^{2\cdot 4^{K-1}+\dots+2\cdot 4}R,(ss_0)^{3\cdot 4^K+2\cdot4^{K-1}+\dots+2\cdot4}\Delta)\in A$. Then $^gV\subseteq U$ by Lemma \ref{57}.
\end{enumerate}
Hence $A$ is a base for $\EU_{2n+1}(R,\Delta)$. We show now that $A_\m$ is a base for $\EU_{2n+1}(R_\m,\Delta_\m)$.
\begin{enumerate}[(1)]
\item Let $U=F_\m(\EU_{2n+1}(ss_0R,ss_0\Delta)),V=F_\m(\EU_{2n+1}(ts_0R,ts_0\Delta))\in A_\m$. Set $W:=F_\m(\EU_{2n+1}(sts_0R,$ $sts_0\Delta))\in A_\m$. Then clearly $W\subseteq U\cap V$.
\item Let $g\in \EU_{2n+1}(R_\m,\Delta_\m)$ and $U=F_\m(\EU_{2n+1}(ss_0R,ss_0\Delta))\in A_\m$. Clearly there are a $K\in\mathbb{N}$ and a $t\in S_\m$ such that $g\in \E^K((1/t)R,(1/t)\Delta)$. Set $V:=F_\m(\EU_{2n+1}((tss_0)^{3\cdot 4^K+2\cdot 4^{K-1}+\dots+2\cdot 4}R,$ $(tss_0)^{3\cdot 4^K+2\cdot 4^{K-1}+\dots+2\cdot 4}\Delta))\in A_\m$. Then $^gV$ $\subseteq U$ by Lemma \ref{57}.
\end{enumerate}
Hence $A_\m$ is a base for $\EU_{2n+1}(R_\m,\Delta_\m)$.
\end{proof}
\begin{Lemma}\label{78}
Let $(I,\Omega)$ be an odd form ideal of $(R,\Delta)$, $\m$ a maximal ideal of $C$ and $h\in \NU_{2n+1}(R_\m,\Delta_\m)\setminus \CU_{2n+1}((R_\m,\Delta_\m),(I_\m,\Omega_\m))$. Then given any $U\in A_\m$, there is a $k\in\mathbb{N}$ and elements $\xi\in \EU_{2n+1}((R_\m,\Delta_\m),$ $(I_\m,\Omega_\m))$, $g_0,\dots,g_k\in F_\m(\EU_{2n+1}(R,\Delta))$, $\epsilon_0,\dots,\epsilon_k\in \EU_{2n+1}(R_\m,\Delta_\m)$ and $l_1,\dots,l_k\in \{\pm 1\}$ such that $g_k$ is an elementary matrix in $\EU_{2n+1}(R_\m,\Delta_\m)$ which is not $(I_\m,\Omega_\m)$-elementary,
\[ ^{\epsilon_{k}}([^{\epsilon_{k-1}}(\dots^{\epsilon_2}([^{\epsilon_1}([^{\epsilon_0}h,g_0]^{l_1}),g_1]^{l_2})\dots),g_{k-1}]^{l_{k}})\xi=g_k\]
and
\[^{d_i}g_i\in U~\forall i\in \{0,\dots,k\}\]
where $d_i=(\epsilon_{i}\hdots\epsilon_0)^{-1}\hspace{0.1cm}~(0\leq i\leq k)$. 
\end{Lemma}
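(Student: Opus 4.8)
The plan is to repeat the proof of Lemma \ref{73} over the semilocal ring $R_\m$, carrying the base $A_\m$ of Lemma \ref{77} along at every step. By assumption (3) the ring $R_\m$ is semilocal, so with $n\geq 3$ all of Subsection \ref{sec5.1} (in particular Lemmas \ref{67}--\ref{73}) is available for the odd form ring $(R_\m,\Delta_\m)$ and the odd form ideal $(I_\m,\Omega_\m)$. Since $h'\notin CU_{2n+1}((R_\m,\Delta_\m),(I_\m,\Omega_\m))$, the construction inside the proof of Lemma \ref{73} --- with the ambient E-normal subgroup $H$ deleted, and the matrices "$\xi$" occurring there now read as $(I_\m,\Omega_\m)$-elementary matrices --- transforms $h'$, by a finite alternating sequence of conjugations by elements of $EU_{2n+1}(R_\m,\Delta_\m)$, commutators with elementary matrices, inversions, and left/right multiplications by $(I_\m,\Omega_\m)$-elementary matrices, into a single elementary matrix that is not $(I_\m,\Omega_\m)$-elementary. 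The task is to re-run this so that the output has the exact form claimed.

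First I would remove the $(I_\m,\Omega_\m)$-elementary correction matrices. In each case of Lemma \ref{73} such a $\xi$ is supported on root subgroups disjoint from the root of the commutator partner $g$ used at the next stage, so $\xi$ and $g$ commute and $[\xi X,g]={}^{\xi}[X,g]$; hence every $\xi$ can be moved outward and incorporated into one of the conjugating factors. What remains is an expression of the shape $^{\epsilon'_k}([^{\epsilon'_{k-1}}(\dots[^{\epsilon'_1}([^{\epsilon'_0}h',g'_0]^{l_1}),g'_1]^{l_2}\dots),g'_{k-1}]^{l_k})$ with $\epsilon'_i\in EU_{2n+1}(R_\m,\Delta_\m)$, $l_i\in\{\pm1\}$, each $g'_i$ an elementary matrix, and $g'_k$ a non-$(I_\m,\Omega_\m)$-elementary elementary matrix.

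Next I would arrange $g'_i\in F_\m(EU_{2n+1}(R,\Delta))$ together with ${}^{d'_i}g'_i\in U'$, where $d'_i=(\epsilon'_i\cdots\epsilon'_0)^{-1}$. Most generators occurring in Lemma \ref{73} have the form $T_{ab}(1)=F_\m(T_{ab}(1))$, so lie in the image automatically; these may moreover be replaced throughout by $T_{ab}(c)$ with $c=f_\m(ss_0)$, a unit of $R_\m$ lying in any prescribed member of $A_\m$ (recall $ss_0\in S_\m$ is inverted in $R_\m$), and multiplying a matrix entry by a unit of $R_\m$ leaves untouched every congruence condition modulo $I_\m$ that drives Lemma \ref{73}. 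The initial "detecting" generator $g'_0$ is treated using that $EU_{2n+1}(R_\m,\Delta_\m)$ is the normal closure of $F_\m(EU_{2n+1}(R,\Delta))$ --- which follows from the relations of Lemma \ref{23}, since $EU_{2n+1}(R_\m,\Delta_\m)$-conjugates of $F_\m(T_{ij}(1))$ and $F_\m(T_j(a,b))$ already produce all short and all extra short root matrices over $R_\m$: writing an elementary $g$ with $[h',g]\notin U_{2n+1}((R_\m,\Delta_\m),(I_\m,\Omega_\m))$ as a product of conjugates ${}^{\delta}w$ of elements $w\in F_\m(EU_{2n+1}(R,\Delta))$ and using that $U_{2n+1}((R_\m,\Delta_\m),(I_\m,\Omega_\m))$ is normalised by $EU_{2n+1}(R_\m,\Delta_\m)$, one finds such a factor with $[{}^{\delta^{-1}}h',w]\notin U_{2n+1}((R_\m,\Delta_\m),(I_\m,\Omega_\m))$ and puts $\epsilon'_0:=\delta^{-1}$, $g'_0:=w$. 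The witness $g'_k$, whose failure to be $(I_\m,\Omega_\m)$-elementary is detected by a single entry modulo $I_\m$, may be taken with parameter in $f_\m(R)$, and likewise for the occasional extra short root generators. Finally, the smallness condition is imposed inductively: having fixed $\epsilon'_0,\dots,\epsilon'_i$, choose by Definition \ref{76}(2) a $V_i\in A_\m$ with $^{d'_i}V_i\subseteq U'$ and pick $g'_i$ inside $V_i$ --- possible by the above, since $F_\m(EU_{2n+1}(ss_0R,ss_0\Delta))$ contains $F_\m(T_{ab}(ss_0r))$ for every $r\in R$.

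The hard part is not any single step but the bookkeeping: one must walk through the entire case analysis of Lemma \ref{73} (Parts I--III, with $n>3$ and $n=3$ separated, and all their subcases) and check, in each, that the generator introduced can simultaneously be chosen in $F_\m(EU_{2n+1}(R,\Delta))$ and, after conjugation by the accumulated $d'_i$, inside the given $U'$, and that the $(I_\m,\Omega_\m)$-elementary correction matrices really do commute with the subsequent commutator partners so that they may be absorbed into the conjugators. Each individual verification is routine, but there are many of them.
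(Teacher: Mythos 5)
Your proposal follows exactly the paper's strategy: the paper's entire proof of this lemma is the single instruction to replace each $g_i$ in the proof of Lemma \ref{73} by an appropriate element of $U'_i$, where $U'_i\in A_\m$ is chosen so that $^{d'_i}U'_i\subseteq U'$ (possible since $A_\m$ is a base by Lemma \ref{77}) --- i.e., re-run the semilocal argument over $R_\m$ with the commutator partners drawn from the base. You are in fact more explicit than the paper about the required adjustments (absorbing the $(I_\m,\Omega_\m)$-elementary correction factors into the conjugators $\epsilon'_i$, which need not lie in $F_\m(EU_{2n+1}(R,\Delta))$, and rescaling unit parameters by the central unit $f_\m(ss_0)$), so this is correct and essentially identical to the intended argument.
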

\begin{proof}
Section \ref{sec6} shows that there is a $k\in\mathbb{N}$ and elements $\xi\in EU_{2n+1}((R_\m,\Delta_\m),(I_\m,\Omega_\m))$, $g_0,\dots,g_k,\epsilon_0,\dots,$ $\epsilon_k\in \EU_{2n+1}(R_\m,\Delta_\m)$ and $l_1,\dots,l_k\in \{\pm 1\}$ such that $g_k$ is an elementary matrix in $\EU_{2n+1}(R_\m,\Delta_\m)$ which is not $(I_\m,\Omega_\m)$-elementary and
\[ ^{\epsilon_{k}}([^{\epsilon_{k-1}}(\dots^{\epsilon_2}([^{\epsilon_1}([^{\epsilon_0}h,g_0]^{l_1}),g_1]^{l_2})\dots),g_{k-1}]^{l_{k}})\xi=g_k.\]
Now replace each $g_i$ by an appropriate element of $U_i$ where $U_i\in A_\m$ is chosen such that $^{d_i}U_i\subseteq U$ (possible since $A_\m$ is a base for $\EU_{2n+1}(R_\m,\Delta_\m)$ by Lemma \ref{77}). 
\end{proof}
\begin{Lemma}\label{79}
Let $(I,\Omega)$ be an odd form ideal of $(R,\Delta)$, $m$ a maximal ideal of $C$ and $s_0\in S_\m$. Set 
\begin{align*}
B:=&\{\EU_{2n+1}(I(xs_0),\Omega(xs_0))\mid x\in R, \forall s\in S_\m:xs\not\in I\}\\
&\cup\{\EU_{2n+1}(I(a\circ s_0),\Omega(a\circ s_0))\mid a\in \Delta, \forall s\in S_\m:a\circ s\not\in \Omega\}
\end{align*}
where $(I(xs_0),\Omega(xs_0))$ (resp. $(I(a\circ s_0),\Omega(a\circ s_0))$ denotes the odd form ideal defined by $xs_0$ (resp. $a\circ s_0$), see Definition \ref{13}. Further set $B_\m:=F_\m(B)$ and let $A_\m$ be defined as in Lemma \ref{77}. Then the following is true.
\begin{enumerate}[(1)]
\item[\textnormal{(1)}] If $U \in A_\m$ and $g\in \EU_{2n+1}(R_\m,\Delta_\m)$ is an elementary matrix which is not $(I_\m,\Omega_\m)$-elementary, then 
\[V\subseteq{}^{U}g\]
for some $V\in B_\m$.
\item[\textnormal{(2)}] If $V\in B_\m$ and $d\in \EU_{2n+1}(R_\m,\Delta_\m)$ is an elementary matrix, then 
\[g\in{}^{d}V\]
for some elementary matrix $g\in \EU_{2n+1}(R_\m,\Delta_\m)$ which is not $(I_\m,\Omega_\m)$-elementary.
\end{enumerate}
\end{Lemma}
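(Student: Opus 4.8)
The plan is to prove both parts by explicit computation inside $EU_{2n+1}(R_\m,\Delta_\m)$ using the Steinberg-type relations of Lemma \ref{23}, freely translating between ``$z\notin I_\m$'' (resp.\ ``$a$ not $(I_\m,\Omega_\m)$-elementary as an extra short root parameter'') over $R_\m$ and ``$zs\notin I$ for all $s\in S_\m$'' (resp.\ ``$a\bullet s\notin\Omega$ for all $s\in S_\m$'') over $R$, exactly as in the proof of Lemma \ref{75}. Throughout I will use the free fact that for a group $G$ and subsets $K,X\subseteq G$ the subgroup $^KX$ is normalised by $K$; with $K=U'$ this keeps commutators of elements of $U'$ with elements of $^{U'}g'$ inside $^{U'}g'$, and this is what drives the argument for (1).

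For (1): after applying (S1) if necessary I may assume $g'=T_{ij}(y)$ with $y\in R_\m\setminus I_\m$, or $g'=T_i(a)$ with $a\in\Delta_\m^{-\epsilon(i)}\setminus\Omega_\m^{-\epsilon(i)}$. In the short root case write $y=f_\m(x)f_\m(t)^{-1}$ with $x\in R$, $t\in S_\m$; then $y\notin I_\m$ forces $xs\notin I$ for all $s\in S_\m$, so $EU_{2n+1}(I(xs_0),\Omega(xs_0))\in B$. Fixing the given $U'=F_\m(EU_{2n+1}(s_1s_0R,s_1s_0\Delta))\in A_\m$, I would conjugate $g'$ by the generators $T_{jk}(f_\m(s_1s_0tr))$, $T_{ki}(f_\m(s_1s_0tr))$, $T_{j,-i}(f_\m(s_1s_0tr))$, $T_j(\cdot)$ of $U'$ and apply (S4), (S5), (E2), (SE2) to get that $^{U'}g'$ contains $T_{ik}(f_\m(s_1s_0xr))$ and $T_{lj}(f_\m(s_1s_0rx))$ for all $r\in R$ and all admissible $k,l$; transporting these to every root position by further commutators with generators of $U'$ and summing, $^{U'}g'$ then contains $F_\m(EU_{2n+1}(I((s_1s_0)^Nx),\Omega((s_1s_0)^Nx)))$ for some $N=N(n)\in\mathbb{N}$. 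Since $(s_1s_0)^Nx=(s_1^Ns_0^{N-1}x)s_0$ with $s_1^Ns_0^{N-1}x\cdot s\notin I$ for all $s\in S_\m$, this last subgroup is $F_\m$ of a member of $B$, and I take it as $V'$. The extra short root case is analogous: from $a\notin\Omega_\m^{-\epsilon(i)}$ I extract, using that $\Delta^{-1}$ is an odd form parameter for the conjugate odd quadruple (Remark \ref{10}(d)), an $a_0\in\Delta$ with $a_0\bullet s\notin\Omega$ for all $s\in S_\m$, so $EU_{2n+1}(I(a_0\bullet s_0),\Omega(a_0\bullet s_0))\in B$; conjugating $T_i(a)$ by the generators of $U'$ via (E1)--(E3) and (SE2) gives $F_\m(EU_{2n+1}(I((s_1s_0)^Na_0\bullet s_0),\Omega(\dots)))\subseteq{}^{U'}g'$, again $F_\m$ of a member of $B$.

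For (2): I will exhibit one generator of the relative elementary group whose $F_\m$-image is $V'$ that is both not $(I_\m,\Omega_\m)$-elementary and commutes with $d'$; then this generator equals its own $d'$-conjugate and lies in $^{d'}V'$. If $V'=F_\m(EU_{2n+1}(I(ws_0),\Omega(ws_0)))$ I use $T_{ab}(f_\m(ws_0))$: it is a generator, it is not $(I_\m,\Omega_\m)$-elementary since $f_\m(ws_0)\notin I_\m$ (because $ws\notin I$ for all $s$ and $s_0\in S_\m$), and since $d'$ has at most two nonzero hyperbolic indices and $n\geq 3$ I may choose $a\neq\pm b$ avoiding the two forbidden values imposed on $a$ and on $b$ by (S3) (for $d'$ a short root matrix) or (SE1) (for $d'$ extra short), so $T_{ab}(f_\m(ws_0))$ commutes with $d'$. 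If $V'=F_\m(EU_{2n+1}(I(a_0\bullet s_0),\Omega(a_0\bullet s_0)))$ I use $T_i(a_0\bullet s_0)$ with $i\in\Theta_-$ (so $a_0\bullet s_0\in\Omega(a_0\bullet s_0)^{-\epsilon(i)}$), which is not $(I_\m,\Omega_\m)$-elementary because $f_\m(a_0\bullet s_0)\notin\Omega_\m$; it commutes with a short root $d'$ by (SE1) after choosing $i$ suitably, and when $d'$ is itself extra short I instead take a long root generator $T_i(0,z)$ of $V'$ with index $\neq-k$, which commutes with $d'$ by (E2)/(E3) since its first coordinate vanishes, using assumption (2) on $C$ to ensure such a long root generator exists that is not $(I_\m,\Omega_\m)$-elementary.

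The main obstacle is Part (1): one has to check that the iterated commutator manipulations truly reach \emph{every} root position (which is precisely where $n\geq 3$ enters and forces the passage to a large power $(s_1s_0)^N$) and to carry out the bookkeeping for the extra short root parameters, whose two coordinates scale by different powers of $s_1s_0$ and which involve the $\Delta^{\pm1}$, $\bullet_{\pm1}$ operations. Part (2) is routine by comparison, the only mild point being the last case above.
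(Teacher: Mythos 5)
Your Part (1) follows the standard ``level computation'': conjugating $g'$ by generators of $U'$ and using (S1)--(SE2) to populate every root position with parameters in the odd form ideal generated by a suitable $S_\m$-multiple of the offending parameter. That is exactly the route the paper intends (its proof is literally ``follows from the relations in Lemma \ref{23}''), and your reductions $y\notin I_\m\Leftrightarrow xs\notin I\ \forall s\in S_\m$ and $(s_1s_0)^Nx=(s_1^Ns_0^{N-1}x)s_0$ are correct; I accept this part as a sound, if not fully executed, plan.

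Part (2), however, has a genuine gap in its last case ($V'=F_\m(EU_{2n+1}(I(a_0\bullet s_0),\Omega(a_0\bullet s_0)))$ and $d'=T_k(y,z)$ extra short). You claim $V'$ contains a long root generator $T_i(0,z)$ that is not $(I_\m,\Omega_\m)$-elementary, ``using assumption (2) on $C$''. That assumption is irrelevant here, and the claim is false in general: $\Omega(a_0\bullet s_0)=\Omega^{I(a_0\bullet s_0)}_{min}\+(a_0\bullet s_0)\bullet R$ can have trivial long-root part. Already in the setting of Example \ref{174} (with $\mu=0$, $I=\{0\}$, $a_0=(x_0,0)$ with $x_0\notin J(\Omega)$) one gets $I(a_0\bullet s_0)=\{0\}$ and $\Omega(a_0\bullet s_0)=x_0R\times\{0\}$, so every element $T_i(0,z)$ of $V'$ is the identity; the only nontrivial generators are extra short with nonzero first coordinate, and by (E2) such a $T_i(b_1,b_2)$ with $i\neq\pm k$ does not commute with $T_k(y,z)$ unless $\bar y\mu b_1=0$. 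The correct repair is a case split you do not make: if some short root generator $T_{ab}(c)$ of $V'$ is not $(I_\m,\Omega_\m)$-elementary, use it (it commutes with $d'$ by (SE1)); otherwise $I(a_0\bullet s_0)_\m\subseteq I_\m$, and then one takes an extra short generator $T_k(b)$ of $V'$ with the \emph{same} index $k$ as $d'$ and applies (E1): the conjugate is the single matrix $T_k\bigl((y,z)\+_{-\epsilon(k)}b\+_{-\epsilon(k)}\minus_{-\epsilon(k)}(y,z)\bigr)$, whose parameter differs from $b$ by $(0,\bar y\mu b_1-\overline{\bar y\mu b_1}\lambda)$ with $\bar y\mu b_1\in I(a_0\bullet s_0)_\m\subseteq I_\m$, hence lies in $\Omega^{I_\m}_{min}$, so the conjugate is still not $(I_\m,\Omega_\m)$-elementary. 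Without some such argument your proof of (2) is incomplete.
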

\begin{proof}
Follows from the relations in Lemma \ref{23}.
\end{proof}
\begin{Corollary}\label{80}
Let $(I,\Omega)$ be an odd form ideal of $(R,\Delta)$, $m$ a maximal ideal of $C$ and $s_0\in S_\m$. If $U \in A_\m$, $d\in \EU_{2n+1}(R_\m,\Delta_\m)$ and $g\in \EU_{2n+1}(R_\m,\Delta_\m)$ is an elementary matrix which is not $(I_\m,\Omega_\m)$-elementary, then 
\[V\subseteq{}^{U}(^{d}g)\]
for some $V\in B_\m$.
\end{Corollary}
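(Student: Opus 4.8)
The plan is to pull the conjugation by $d'$ into the base $A_\m$ and then to quote Lemma~\ref{79}. By Lemma~\ref{77} the family $A_\m$ is a base for $EU_{2n+1}(R_\m,\Delta_\m)$ in the sense of Definition~\ref{76}; applying part~(2) of that definition to the group element $d'$ and the subgroup $U'\in A_\m$ yields a $U''\in A_\m$ with ${}^{d'}U''\subseteq U'$. Since conjugation by $d'$ is an automorphism of $EU_{2n+1}(R_\m,\Delta_\m)$ we have ${}^{(\,{}^{d'}U'')}({}^{d'}g')={}^{d'}({}^{U''}g')$, and therefore
\[{}^{U'}({}^{d'}g')\ \supseteq\ {}^{(\,{}^{d'}U'')}({}^{d'}g')\ =\ {}^{d'}\bigl({}^{U''}g'\bigr).\]
Now $g'$ is an elementary matrix which is not $(I_\m,\Omega_\m)$-elementary, so Lemma~\ref{79}(1), applied to $U''$ and $g'$, gives a $W\in B_\m$ with $W\subseteq{}^{U''}g'$; hence ${}^{d'}W\subseteq{}^{U'}({}^{d'}g')$.

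It remains to find a member of $B_\m$ inside ${}^{d'}W$. I would reduce this to the case of a single root matrix: write $d'$ as a product $e_1\cdots e_N$ of short and extra short root matrices of $EU_{2n+1}(R_\m,\Delta_\m)$ and induct on $N$, the case $N=0$ being trivial and the inductive step being the conjugation of a member of $B_\m$ by one root matrix. For that step recall that a generator of $B_\m$ is $F_\m\bigl(EU_{2n+1}(I(y),\Omega(y))\bigr)$, attached to a single element $y$ (equal to $xs_0$ or $a\bullet s_0$) which is not absorbed into $I$, resp.\ $\Omega$, by any element of $S_\m$; conjugating its short root generators $T_{ij}(r)$, $r\in I(y)$, by a fixed root matrix and running the commutator identities of Lemma~\ref{23} returns, for a suitable pair of indices disjoint from the ones moved by that root matrix (so that (S3) and (SE1) make them commute), all of $T_{i'j'}(r)$, $r\in I(y)$, again, together with extra short root generators of the corresponding minimal level; from these one rebuilds, again by Lemma~\ref{23}, a whole subgroup in $B_\m$ attached to a single $S_\m$-non-absorbable element times $s_0$. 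A further appeal to Lemma~\ref{79}(1) lets one pass from such a recovered elementary matrix back to a full subgroup of $B_\m$, which completes the induction.

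The main obstacle is precisely this reduction to the single-root-matrix case: in contrast to Lemma~\ref{79}, the conjugating matrix $d'$ here is an arbitrary (though bounded-length) product of root matrices, so one has to propagate a single $S_\m$-non-absorbable element through every factor, choosing at each stage two fresh indices inside $\{1,\dots,n\}\cup\{-n,\dots,-1\}$ to avoid the indices touched by the current factor --- which is possible because $n\geq 3$ and because, by Lemma~\ref{57}, the successive conjugates stay of controlled complexity --- while simultaneously making sure that the odd form parameter part, and not merely the ideal part, of the level that is recovered is again of the minimal form $\Omega(\,\cdot\,)$ that figures in the definition of $B_\m$. Once this bookkeeping is in place the corollary follows by combining it with the inclusions obtained above.
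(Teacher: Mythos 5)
Your opening reduction is fine: using the base property of $A_\m$ to find $U''$ with ${}^{d'}U''\subseteq U'$ and writing ${}^{U'}({}^{d'}g')\supseteq{}^{d'}({}^{U''}g')\supseteq{}^{d'}W$ with $W\in B_\m$ from Lemma \ref{79}(1) is correct. But it leaves you needing to find a member of $B_\m$ inside ${}^{d'}W$, and that is a genuinely harder statement than the corollary itself; your sketch of it does not close. The inductive step you propose --- that conjugating a member of $B_\m$ by a \emph{single} root matrix again contains a member of $B_\m$ --- is not what Lemma \ref{79}(2) provides (it only yields a single elementary matrix that is not $(I_\m,\Omega_\m)$-elementary), and it is not clear it is even true: the generators $T_{i'j'}(r)$ that survive conjugation unchanged are only those with indices disjoint from the conjugating root matrix, and the subgroup they generate is not all of $F_\m(EU_{2n+1}(I(y),\Omega(y)))$ (commutators via (S4) produce entries in $I(y)^2$ rather than $I(y)$, and the extra short root generators with the ``moved'' indices are lost entirely). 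Your appeal to ``a further application of Lemma \ref{79}(1)'' to repair this requires conjugating by a fresh subgroup from $A_\m$, but in your arrangement the only available $A_\m$-conjugation (the outer ${}^{U'}$) has already been spent at the very first step, so there is nothing left to feed into Lemma \ref{79}(1) inside the set ${}^{d'}W$.

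The way out is to interleave the two operations rather than separating them, which is what the paper does: write $d'=d'_k\cdots d'_1$ and induct on $k$. At each stage, first choose $U'_1\in A_\m$ with ${}^{d'_k}U'_1\subseteq U'$, apply the inductive hypothesis (or, for $k=1$, Lemma \ref{79}(1)) to get $V'\in B_\m$ with $V'\subseteq{}^{U'_1}({}^{d'_{k-1}\cdots d'_1}g')$, then conjugate by the single factor $d'_k$ and use Lemma \ref{79}(2) to extract one elementary matrix $g''$ that is not $(I_\m,\Omega_\m)$-elementary from ${}^{d'_k}V'$, and finally use the still-unspent outer ${}^{U'}$ together with Lemma \ref{79}(1) to regenerate a full member of $B_\m$ from $g''$. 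This way each factor of $d'$ only ever has to push a single elementary matrix (not a whole subgroup of $B_\m$) through a conjugation, and the base property supplies a fresh $A_\m$-subgroup at every level to rebuild the $B_\m$-member afterwards. Your bookkeeping concerns about the form-parameter part and about $n\geq 3$ disappear once the argument is organized this way, because they are entirely contained in the two parts of Lemma \ref{79}, which you may then quote as a black box.
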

\begin{proof}
If $d = e$, then we are done, by  Lemma \ref{79}(1). Assume $d\neq e$ and write $d$ as a product $d_k\dots d_1$ of nontrivial elementary matrices in $\EU_{2n+1}(R_\m,\Delta_\m)$. We proceed by induction on $k$.\\
\underline{case 1} Assume that $k = 1$. Since $A_\m$ is a base for $\EU_{2n+1}(R_\m,\Delta_\m)$, there is a $U_1 \in A_\m$ such that $^{d_1}\U_1\subseteq U$. Clearly
\begin{align*}
&^{U}(^{d_1}g)\\
\supseteq~~&^{U}(^{d_1} (^{\U_1}g))\\
\overset{L.\ref{79}(1)}{\supseteq}& ^{U}(^{d_1} V) \text{ (for some } V \in B_\m
 \text{)}\\
\overset{L.\ref{79}(2)}{\supseteq}&^{U}g'  \text{ (for some elementary matrix }g'\in \EU_{2n+1}(R_\m,\Delta_\m)\text{ which is not }(I_\m,\Omega_\m)\text{-elementary)} \\
\overset{L.\ref{79}(1)}{\supseteq}&V_1 \text{ (for some } V_1 \in B_\m \text{).}
\end{align*}
\underline{case 2} Assume that $k > 1$. Set $h := d_{k-1}\dots d_1$. Thus $d = d_k\dots d_1 = d_kh$.  We can assume by induction 
on $k$ that given $U_1 \in A_\m$, $^{U_1}(^{h}g)\supseteq V$ for some $V \in B_\m$. Now we proceed similarly to case 1, replacing $g$ by $^{h}g$ and $d_1$ by $d_k$. Here are the details. Choose $U_1\in A_\m$ such that $^{d_k}\phi(U_1)\subseteq U$. Clearly
\begin{align*}
&^{U}(^{d_kh}g)\\
\supseteq~~&^{U}(^{d_k} (^{\U_1}(^{h}g)))\\
\overset{I.A.}{\supseteq}~&^{U}(^{d_k} V)\\
\overset{L.\ref{79}(2)}{\supseteq}&^{U}g'  \text{ (for some elementary matrix }g'\in \EU_{2n+1}(R_\m,\Delta_\m)\text{ which is not }(I_\m,\Omega_\m)\text{-elementary)} \\
\overset{L.\ref{79}(1)}{\supseteq}&V_1 \text{ (for some } V_1 \in B_\m \text{).}
\end{align*}
\end{proof}
\begin{Lemma}\label{81}
Let $(I,\Omega)$ be an odd form ideal and $H$ an E-normal subgroup of level $(I,\Omega)$. If $H\not\subseteq \CU_{2n+1}((R,\Delta),(I,\Omega))$, then $H$ contains an elementary matrix which is not $(I,\Omega)$-elementary.
\end{Lemma}
\begin{proof}
Let $h\in H\setminus \CU_{2n+1}((R,\Delta),(I,\Omega))$. By Lemma \ref{75} there is a maximal ideal $\m$ of $C$ such that $h':=F_\m(h)\in \U_{2n+1}(R_\m,\Delta_\m)\setminus \CU_{2n+1}((R_\m,\Delta_\m),(I_\m,\Omega_\m))$. By Lemma \ref{56} there is an $s_0\in S_\m$ such that $\phi_\m$ is injective on $\pi(K)$ where $K=\U_{2n+1}((R,\Delta),(s_0R,s_0\Delta))\cap \NU_{2n+1}((R,\Delta),(I,\Omega))$. By Remark \ref{42}, $\NU_{2n+1}((R,\Delta),(s_0R,s_0\Delta))=\U_{2n+1}(R,\Delta)$ (since $J(s_0\Delta)\subseteq s_0R$). Hence $K$ is a normal subgroup of $\NU_{2n+1}((R,\Delta),(I,\Omega))$. Let $A$ and $A_\m$ be the bases defined in Lemma \ref{77} and choose an $U\in A_\m$. By Lemma \ref{78} there is a $k\in\mathbb{N}$ and elements $\xi\in \EU_{2n+1}((R_\m,\Delta_\m),(I_\m,\Omega_\m))$, $g_0,\dots,g_k\in F_\m(\EU_{2n+1}(R,\Delta))$, $\epsilon_0,\dots,\epsilon_k\in \EU_{2n+1}(R_\m,\Delta_\m)$ and $l_1,\dots,l_k\in \{\pm 1\}$ such that $g_k$ is an elementary matrix in $\EU_{2n+1}(R_\m,\Delta_\m)$ which is not $(I_\m,\Omega_\m)$-elementary,
\[ ^{\epsilon_{k}}([^{\epsilon_{k-1}}(\dots^{\epsilon_2}([^{\epsilon_1}([^{\epsilon_0}h',g_0]^{l_1}),g_1]^{l_2})\dots),g_{k-1}]^{l_{k}})\xi=g_k\tag{76.1}\]
and
\[^{d_i}g_i\in U~\forall i\in \{0,\dots,k\}\]
where $d_i=(\epsilon_{i}\hdots\epsilon_0)^{-1}\hspace{0.1cm}~(0\leq i\leq k)$. By conjugating (76.1) by $d_k$ we get
\[[\dots[[h',^{d_0}\hspace{-0.1cm}g_0]^{l_1},^{d_1}\hspace{-0.1cm}g_1]^{l_2}\dots,^{d_{k-1}}\hspace{-0.1cm}g_{k-1}]^{l_{k}}({}^{d_k}\xi)=^{d_k}\hspace{-0.1cm}g_k.\tag{76.2}\]
By Corollary \ref{80} there is a $V\in B_\m$ (where $B_\m$ is defined as in Lemma \ref{79}) such that 
\[V\subseteq {}^{U}(^{d_k}g_k).\tag{76.3}\]
By (76.2) and (76.3) we have 
\[V\subseteq {}^{U}(^{d_k}g_k)= {}^{U}([\dots[[h',^{d_0}\hspace{-0.1cm}g_0]^{l_1},^{d_1}\hspace{-0.1cm}g_1]^{l_2}\dots,^{d_{k-1}}\hspace{-0.1cm}g_{k-1}]^{l_{k}}({}^{d_k}\xi))\tag{76.4}.\]
By applying $\pi_\m$ to (76.4) we get 
\[\pi_\m(V)\subseteq \pi_\m({}^{U}([\dots[[h',^{d_0}\hspace{-0.1cm}g_0]^{l_1},^{d_1}\hspace{-0.1cm}g_1]^{l_2}\dots,^{d_{k-1}}\hspace{-0.1cm}g_{k-1}]^{l_{k}}))\tag{76.5}\]
since $\xi\in \EU_{2n+1}((R_\m,\Delta_\m),(I_\m,\Omega_\m))\subseteq \U_{2n+1}((R_\m,\Delta_\m),(I_\m,\Omega_\m))$.
One checks easily that 
\[{}^{U}([\dots[[h',^{d_0}\hspace{-0.1cm}g_0]^{l_1},^{d_1}\hspace{-0.1cm}g_1]^{l_2}\dots,^{d_{k-1}}\hspace{-0.1cm}g_{k-1}]^{l_{k}})\subseteq  F_\m(H\cap K)\tag{76.6}\]
since $^{d_i}g_i\in U~\forall i\in\{0,\dots,k\}$. Let $\hat V\in B$ such that $F_\m(\hat V)=V$. By (76.5) and (76.6) we have $\pi_\m(F_\m(\hat V))\subseteq \pi_\m(F_\m(H\cap K))$ which is equivalent to $\phi_\m(\pi(\hat V))\subseteq \phi_\m(\pi(H\cap K))$. Since $\phi_\m$ is injective on $\pi(K)$, it follows that $\pi(\hat V)\subseteq \pi(H)$ (note that $\hat V\subseteq K$). Hence $\hat V\subseteq H\cdot \U_{2n+1}((R,\Delta),(I,\Omega))$ and therefore $E(\hat V)\subseteq H\cdot \U_{2n+1}((R,\Delta),(I,\Omega))$ where $E(\hat V)$ denotes the normal closure of $\hat V$ in $\EU_{2n+1}(R,\Delta)$ (note that both $H$ and $\U_{2n+1}((R,\Delta),(I,\Omega))$ are E-normal). Thus
\begin{align*}
\hat V\subseteq E(\hat V)=&[\EU_{2n+1}(R,\Delta),E(\hat V)]\\
\subseteq&[\EU_{2n+1}(R,\Delta),H\cdot \U_{2n+1}((R,\Delta),(I,\Omega))]\\
\subseteq& [\EU_{2n+1}(R,\Delta),H]({}^H[\EU_{2n+1}(R,\Delta),\U_{2n+1}((R,\Delta),(I,\Omega))])\\
=&[\EU_{2n+1}(R,\Delta),H]({}^H\!\EU_{2n+1}((R,\Delta),(I,\Omega)))\\
\subseteq& H
\end{align*}
by Theorem \ref{51}. Clearly $\hat V$ contains elementary matrices which are not $(I,\Omega)$-elementary.
\end{proof}
\begin{Theorem}\label{82}
Let $H$ be a subgroup of $\U_{2n+1}(R,\Delta)$. Then $H$ is E-normal if and only if there is an odd form ideal $(I,\Omega)$ of $(R,\Delta)$ such that 
\[\EU_{2n+1}((R,\Delta),(I,\Omega))\subseteq H \subseteq \CU_{2n+1}((R,\Delta),(I,\Omega)).\]
Further $(I,\Omega)$ is uniquely determined, namely it is the level of $H$.
\end{Theorem}
\begin{proof}
Follows from Theorem \ref{51} and Lemma \ref{81} (see the proof of Theorem \ref{74}).
\end{proof}
\subsection{Quasifinite case}
In this subsection we assume that $R$ is {\it quasifinite}. By this we mean that $R$ is the direct limit of subrings $R_i~(i\in \Phi)$ which are almost commutative (i.e. finitely generated as modules over their centers), involution invariant and contain $\lambda$ and $\mu$. 
\begin{Lemma}\label{83}
Each $R_i$ is the direct limit of Noetherian, involution invariant subrings $R_{ij}~(j\in \Psi_i)$ containing $\lambda$ and $\mu$ such that for any $j\in \Psi_i$ there is a subring $C_{ij}$ of $\Center(R_{ij})$ with the properties
\begin{enumerate}[(1)]
\item[\textnormal{(1)}] $\bar c=c$ for any $c\in C_{ij}$,
\item[\textnormal{(2)}] if $\Delta_{ij}$ is an odd form parameter of $R_{ij}$, $(I_{ij},\Omega_{ij})$ an odd form ideal of $(R_{ij},\Delta_{ij})$, $(0,x)\in \Omega_{ij}$ and $c\in C_{ij}$, then $(0,cx)\in\Omega_{ij}$,
\item[\textnormal{(3)}] $(R_{ij})_\m$ is semilocal for any maximal ideal $\m$ of $C_{ij}$ and 
\item[\textnormal{(4)}] if $\Delta_{ij}$ is an odd form parameter of $R_{ij}$ and $(I_{ij},\Omega_{ij})$ an odd form ideal of $(R_{ij},\Delta_{ij})$, then $\Omega^{I_{ij}}_{max}/\Omega_{ij}$ is a Noetherian $C_{ij}$-module.
\end{enumerate}
\end{Lemma}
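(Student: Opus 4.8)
The plan is to exhibit each almost commutative ring $R_i$ as a filtered union of subrings $R_{ij}$, every one of which is module-finite over a finitely generated subring of its symmetric centre, and then to deduce (1)--(3) from standard commutative algebra (integrality, Eakin--Nagata, Artin--Tate) together with the elementary fact that a ring which is module-finite over a commutative local ring is semilocal.

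First I would fix a presentation of $R_i$ over $Z:=Center(R_i)$: since $R_i$ is almost commutative there are $m_1=1,m_2,\dots,m_k\in R_i$ with $R_i=\sum_{l=1}^k Zm_l$. Collect into a finite set $S_0\subseteq Z$ all the structure constants, i.e.\ the coefficients in $Z$ expressing $m_lm_{l'}$, $\bar m_l$, $\lambda$ and $\mu$ as $Z$-combinations of the $m_l$. Let $\Psi_i$ be the set of all finitely generated, involution invariant subrings $D$ of $Z$ with $S_0\cup\overline{S_0}\subseteq D$, partially ordered by inclusion; this is a filtered poset with $\bigcup_D D=Z$. For $j\in\Psi_i$ corresponding to $D$ put $R_{ij}:=\sum_{l=1}^k Dm_l$ and $C_{ij}:=\{c\in D\mid\bar c=c\}$. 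Because $S_0\subseteq D$ and $D\subseteq Z$, the subset $R_{ij}$ is a subring of $R_i$, is involution invariant, contains $\lambda$ and $\mu$, and is finitely generated as a $D$-module; as $D$ is a finitely generated $\mathbb Z$-algebra it is Noetherian, hence so is $R_{ij}$. The transition maps being inclusions, $R_i=\varinjlim_{j\in\Psi_i}R_{ij}$, and one checks that $(R_{ij},~\bar{}~,\lambda,\mu)$ inherits the structure of an odd quadruple (so that "odd form parameter for $R_{ij}$" in (2) is meaningful): involution invariance of $D$ and of the generating data forces $\bar{}$ to restrict to a bijective anti-automorphism of $R_{ij}$ with the same symmetry $\lambda$.

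The technical heart is the descent from $D$ to $C_{ij}=D^+$. Since $\bar{}$ restricts to an involution of $Z$ (using $\overline{\bar x}=\lambda x\bar\lambda$ and $\bar\lambda=\lambda^{-1}$), every $d\in D$ is a root of the monic polynomial $T^2-(d+\bar d)T+d\bar d$ with coefficients in $D^+$; thus $D$ is integral over $D^+$, and being finitely generated as a ring over $D^+$ it is module-finite over $D^+$. By the Eakin--Nagata theorem $D^+$ is Noetherian (and by Artin--Tate it is a finitely generated $\mathbb Z$-algebra). Hence $R_{ij}$ is module-finite over $C_{ij}=D^+$, and $C_{ij}\subseteq Z\cap R_{ij}\subseteq Center(R_{ij})$ consists of bar-fixed elements, which is (1). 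For (3), if $\m$ is a maximal ideal of $C_{ij}$ then $(R_{ij})_\m$ is module-finite over the Noetherian local ring $(C_{ij})_\m$; for any ring $A$ which is module-finite over a commutative local ring $(O,\mathfrak n)$ one has $\mathfrak n A$ inside the Jacobson radical of $A$ and $A/\mathfrak n A$ a finite-dimensional algebra over the field $O/\mathfrak n$, hence Artinian and a fortiori semilocal, so $A$ is semilocal. Finally (2) is verified directly from the definition of a relative odd form parameter, using the $R_{ij}$-quasimodule structure of $\Omega_{ij}$ and that the elements of $C_{ij}$ are central and fixed by $\bar{}$ (this is the closure property of relative form parameters recorded in \cite{preusser}, \cite{shchegolev}).

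The main obstacle I expect is the simultaneous bookkeeping rather than any isolated step: $D$ must be chosen large enough that $R_{ij}$ is a subring containing $\lambda,\mu$ and module-finite over $D$, while $\Psi_i$ must remain filtered with union $R_i$ and each $(R_{ij},~\bar{}~,\lambda,\mu)$ must be a genuine odd quadruple; and the passage $D\rightsquigarrow D^+$, which is what ultimately makes the localisations $(R_{ij})_\m$ semilocal, has to be justified through integrality together with Eakin--Nagata (and Artin--Tate). Once these are in place, properties (1)--(3) and the direct-limit statement follow as indicated.
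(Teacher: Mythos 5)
Your construction and most of the supporting arguments coincide with the paper's: the same presentation of $R_i$ by generators and structure constants over its centre, the same filtered family of finitely generated involution-invariant subrings of the centre, the same integrality polynomial $T^2-(d+\bar d)T+d\bar d$ feeding into Eakin--Nagata, and the same ``module-finite over a commutative local ring implies semilocal'' argument for (3). The one place where you genuinely diverge is the definition of $C_{ij}$, and that is where the proof breaks: you take $C_{ij}=D^+=\{c\in D\mid \bar c=c\}$, whereas the paper takes $C_{ij}$ to be the subring of all finite sums of elements $\pm a\bar a$ with $a\in A_{ij}$. Property (2) holds for the paper's choice precisely because the only multiplicative closure an arbitrary relative odd form parameter $\Omega_{ij}$ enjoys is closure under $(0,x)\mapsto (0,x)\bullet b=(0,\bar x b x)\ldots$ more precisely $(0,x)\bullet b=(0,\bar b x b)$; taking $b=\bar a$ with $a$ central gives $(0,a\bar a x)$, and the additive structure of $\Omega_{ij}$ under $\+$ then yields $(0,cx)$ for every $c$ that is a finite sum of $\pm a\bar a$ --- but for no larger class of $c$. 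Your appeal to ``the closure property of relative form parameters'' only delivers multiplication by norms $a\bar a$, not by arbitrary symmetric central elements.

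For a general bar-fixed central element (2) actually fails. Concretely, let $R_{ij}=\mathbb{Z}[t]$ with the identity involution, $\lambda=-1$, $\mu=0$, so $D=D^+=\mathbb{Z}[t]$. Put $\Lambda:=2\mathbb{Z}[t]+\mathbb{Z}[t^2]$; this additive group satisfies $2\mathbb{Z}[t]\subseteq\Lambda\subseteq\mathbb{Z}[t]$ and is closed under $x\mapsto \bar a x a=a^2x$ (reduce mod $2$: squares lie in $\mathbb{F}_2[t^2]$), so $\Delta_{ij}:=\{0\}\times\Lambda$ is an odd form parameter and $(I_{ij},\Omega_{ij}):=(R_{ij},\Delta_{ij})$ is an odd form ideal. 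Then $(0,1)\in\Omega_{ij}$ and $t\in D^+$, but $(0,t)\notin\Omega_{ij}$ because $t\notin\Lambda$ (its image in $\mathbb{F}_2[t]$ does not lie in $\mathbb{F}_2[t^2]$). So your $C_{ij}$ violates (2), and (2) is not decorative: it is hypothesis (2) of Section \ref{sec5.2} and is exactly what makes $t\Delta=\Delta\bullet t\+(0,t\Lambda)$ a relative odd form parameter in the localisation calculus of Lemma \ref{57}. The repair is the paper's choice: shrink $C_{ij}$ to the subring of sums $\sum\pm a\bar a$. Integrality of $A_{ij}$ over this smaller ring still holds because $a+\bar a=(a+1)(\overline{a+1})-a\bar a-1$ lies in it, and the remainder of your argument (Eakin--Nagata, module-finiteness of $R_{ij}$ over $C_{ij}$, semilocality of $(R_{ij})_{\m}$) then goes through unchanged.
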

\begin{proof}
Denote the center of $R_i$ by $C_i$. Since $R_i$ is almost commutative, there are an $p\in\mathbb{N}$ and $x_1,\dots,x_q\in R_i$ such that $R_i = C_ix_1+\dots+C_ix_q$. For each $k,l\in\{1,\dots,q\}$ there are $a^{(kl)}_1,\dots, a^{(kl)}_q\in C_i$ such that $x_kx_l=\sum\limits_{p=1}^{q}a^{(kl)}_px_p$. Further for each $k\in\{1,\dots,q\}$ there are $b^{(k)}_1,\dots, b^{(k)}_q\in C_i$ such that $\bar x_k=\sum\limits_{p=1}^{q}b^{(k)}_px_p$. Finally there are $c_1,\dots, c_q\in C_i$ and $d_1,\dots,d_q\in C_i$ such that $\lambda =\sum\limits_{p=1}^{q}c_px_p$ and $\mu =\sum\limits_{p=1}^{q}d_px_p$. Set \[K:=\mathbb{Z}[a^{(kl)}_p,\overline{a^{(kl)}_p},b^{(k)}_p,\overline{b^{(k)}_p},c_p,\overline{c_p},d_p,\overline{d_p}\mid k,l,p\in\{1,\dots,q\}].\]
One checks easily that $C_i$ is a $K$-algebra and the direct limit of all involution invariant $K$-subalgebras $A_{ij}~(j\in \Psi_i)$ of $C_i$ which are finitely generated over $K$.
For any $j\in \Psi_i$ set $R_{ij}:=A_{ij}+A_{ij}x_1+\dots+A_{ij}x_q$. One checks easily that each $R_{ij}$ is an involution invariant subring of $R_i$ containing $\lambda$ and $\mu$. Further $\varinjlim\limits_{j} R_{ij}=R_i$. Fix a $j\in \Psi_i$ and let $C_{ij}$ denote the subring of $A_{ij}$ consisting of all finite sums of elements of the form $a\bar a$ and $-a\bar a$ where $a\in A_{ij}$. Cleary $C_{ij}$ is a subring of $\Center(R_{ij})$ which has the properties (1) and (2). Property (3) can be shown as in \cite[proof of Lemma 8.3]{bak-vavilov}.\\
Next we show that $R_{ij}$ is Noetherian. Clearly $A_{ij}$ is a finitely generated $\mathbb{Z}$-algebra and hence also a finitely generated $C_{ij}$-algebra. Since for any $a\in A_{ij}$
\[a + \bar a = (a + 1)(\bar a + 1) - a\bar a-1,\]
$C_{ij}$ contains all sums $a+\bar a$ where $a\in A_{ij}$. Since any $a\in A_{ij}$ is root of the monic polynomial $X^2-(a+\bar a)X+a\bar a$, $A_{ij}$ is an integral extension of $C_{ij}$. Since $A_{ij}$ is an integral extension of $C_{ij}$ and a finitely generated $C_{ij}$-algebra, $A_{ij}$ is a finitely generated module over $\C_{ij}$ by \cite[Chapter VII, Proposition 1.2]{lang}. Since $R_{ij}$ is finitely generated over $A_{ij}$, it is a finitely generated $C_{ij}$-module. Since $K$ is a Noetherian ring, $A_{ij}$ is a Noetherian ring (by Hilbert's Basis Theorem) and hence $C_{ij}$ is a Noetherian ring (by the Eakin-Nagata Theorem). Thus $R_{ij}$ is a Noetherian $C_{ij}$-module and hence a Noetherian ring.\\
It remains to show property (4). Let $\Delta_{ij}$ be an odd form parameter of $R_{ij}$ and $(I_{ij},\Omega_{ij})$ an odd form ideal of $(R_{ij},\Delta_{ij})$. Set $\Gamma_{ij}:=\Gamma(\Omega_{max}^{I_{ij}})$ and $J_{ij}:=J(\Omega_{max}^{I_{ij}})$. Clearly $J_{ij}$ and $\Gamma_{ij}$ are $C_{ij}$-submodules of $R_{ij}$. Since $R_{ij}$ is a Noetherian $C_{ij}$-module, $J_{ij}$ and $\Gamma_{ij}$ are finitely generated. Hence there are $x'_1,\dots, x'_r\in J_{ij}$ and $z_1,\dots,z_s\in \Gamma_{ij}$ such that $J_{ij}=\sum\limits_{k=1}^rC_{ij}x'_k$ and $\Gamma_{ij}=\sum\limits_{l=1}^sC_{ij}z_l$. Since $x'_1,\dots,x'_r\in J_{ij}$, there are $y_1,\dots,y_r\in R_{ij}$ such that $(x'_1,y_1),\dots,(x'_r,y_r)\in \Omega^{I_{ij}}_{max}$. It is an easy exercise to show that $\Omega^{I_{ij}}_{max}/\Omega_{ij}$ is generated by $\{(x'_k,y_k)\+\Omega_{ij}\mid k\in \{1,\dots,r\}\}\cup\{(0,\pm z_l)\+\Omega_{ij}\mid l\in \{1,\dots,s\}\}$ as $A_{ij}$-module. Since $A_{ij}$ is a finitely generated module over $C_{ij}$, it follows that $\Omega^{I_{ij}}_{max}/\Omega_{ij}$ is a finitely generated $C_{ij}$-module. Thus $\Omega^{I_{ij}}_{max}/\Omega_{ij}$ is a Noetherian $C_{ij}$-module since $C_{ij}$ is a Noetherian ring.
\end{proof}
\begin{Lemma}\label{84}
Let $(I,\Omega)$ be an odd form ideal and $H$ an E-normal subgroup of level $(I,\Omega)$. Then $H\subseteq \CU_{2n+1}((R,\Delta),(I,\Omega))$.
\end{Lemma}
\begin{proof}
Let $\sigma\in H$ and $\tau\in \EU_{2n+1}(R,\Delta)$. We have to show that $[\sigma,\tau]\in \U_{2n+1}((R,\Delta),(I,\Omega))$. Since $R$ is quasifinite, it is the direct limit of almost commutative, involution invariant subrings $R_i~(i\in \Phi)$ containing $\lambda$ and $\mu$. By Lemma \ref{83} each $R_i$ is the direct limit of Noetherian, involution invariant subrings $R_{ij}~(j\in \Psi_i)$ containing $\lambda$ and $\mu$ such that for any $j\in \Psi_i$ there is a subring $C_{ij}$ of $\Center(R_{ij})$ with the properties (1)-(4) in Lemma \ref{83}. If $i\in \Phi$ and $j\in \Psi_i$, set $\Delta_{ij}:=\Delta\cap (R_{ij}\times R_{ij})$. One checks easily that $((R_{ij},~\bar{}~,\lambda,\mu),\Delta_{ij})$ is a Hermitian form ring. Clearly there are an $i\in \Phi$ and a $j\in \Psi_i$ such that $\sigma\in \U_{2n+1}(R_{ij},\Delta_{ij})$ and $\tau\in \EU_{2n+1}(R_{ij},\Delta_{ij})$. Set $H_{ij}:=\U_{2n+1}(R_{ij},\Delta_{ij})\cap H$. Then $H_{ij}$ is an E-normal subgroup of $\U_{2n+1}(R_{ij},\Delta_{ij})$ and $\sigma\in H_{ij}$. Let $(I_{ij},\Omega_{ij})$ denote the level of $H_{ij}$. Then obviously $I_{ij}\subseteq I$ and $\Omega_{ij}\subseteq\Omega$. By Theorem \ref{82},
\[H_{ij} \subseteq \CU_{2n+1}((R_{ij},\Delta_{ij}),(I_{ij},\Omega_{ij})).\]
Hence $[\sigma,\tau]\in \U_{2n+1}((R_{ij},\Delta_{ij}),(I_{ij},\Omega_{ij}))
\subseteq \U_{2n+1}((R,\Delta),(I,\Omega))$.
\end{proof}
\begin{Theorem}\label{85}
Let $H$ be a subgroup of $\U_{2n+1}(R,\Delta)$. Then $H$ is E-normal if and only if there is an odd form ideal $(I,\Omega)$ of $(R,\Delta)$ such that 
\[\EU_{2n+1}((R,\Delta),(I,\Omega))\subseteq H \subseteq \CU_{2n+1}((R,\Delta),(I,\Omega)).\]
Further $(I,\Omega)$ is uniquely determined, namely it is the level of $H$.
\end{Theorem}
\begin{proof}
The proof is the same as that of Theorem \ref{74}.
\end{proof}

\nocite{hahn-meara}
\bibliographystyle{plain} 
\bibliography{literatur_odd}
\end{document}